\newtheorem{theorem}{Theorem}
\newtheorem{proposition}{Proposition}
\newtheorem{lemma}{Lemma}
\newtheorem{corollary}{Corollary}
\theoremstyle{definition}
\newtheorem{definition}{Definition}
\newtheorem{remark}{Remark}
\newtheorem{assump}{Assumption}
\newcommand{\R}{\mathbb R}
\newcommand{\N}{\mathbb N}
\newcommand{\OO}{\mathcal O}
\newcommand{\F}{\mathcal F}
\newcommand{\LL}{\mathcal L}
\newcommand{\f}{{\tilde f}}
\newcommand{\V}{\mathcal{V}}
\newcommand{\eps}{\varepsilon}
\newcommand{\tauu}{\bar{\tau}}
\DeclareMathOperator{\Ric}{Ric}
\DeclareMathOperator{\Rm}{Rm}
\DeclareMathOperator{\Hess}{Hess}
\DeclareMathOperator{\Id}{Id}
\DeclareMathOperator{\dive}{div}
\DeclareMathOperator{\vol}{vol}
\renewcommand{\a}{\alpha}
\newcommand{\gm}{\gamma}
\newcommand{\dl}{\delta}
\newcommand{\zt}{\zeta}
\newcommand{\sg}{\sigma}
\newcommand{\e}{\mathrm{e}}
\newcommand{\E}{\mathbb{E}}
\renewcommand{\P}{\mathbb{P}}
\newcommand{\abra}[1]{\left( #1 \right)}
\newcommand{\bbra}[1]{\left\{ #1 \right\}}
\newcommand{\cbra}[1]{\left[ #1 \right]}
\newcommand{\dbra}[1]{\langle #1 \rangle}
\newcommand{\ebra}[1]{\lfloor #1 \rfloor}
\DeclareMathOperator{\Lcut}{\LL Cut}
\author{
Kazumasa Kuwada\footnote{Graduate School of Humanities and Sciences, Ochanomizu University, Tokyo 112-8610, Japan.
\break 
E-mail: \texttt{kuwada.kazumasa@ocha.ac.jp} /
Present address: Institut f\"{u}r Angewandte Mathematik, Universit\"{a}t Bonn, Endenicher Allee 60, 53115 Bonn, Germany.
E-mail: \texttt{kuwada@iam.uni-bonn.de}
}
\footnote{Partially supported by the JSPS fellowship for research abroad.}
\hspace*{0em}
and Robert Philipowski\footnote{Institut f\"{u}r Angewandte Mathematik, Universit\"{a}t Bonn, Endenicher Allee 60, 53115 Bonn, Germany.
\break E-mail: \texttt{philipowski@iam.uni-bonn.de}
}
}
\title{Coupling of Brownian motions and Perelman's $\LL$-functional}
\date{}
\begin{document}
\maketitle

\begin{abstract}
We show that on a manifold whose Riemannian metric evolves under backwards Ricci flow two Brownian motions can be coupled in such a way
that the expectation of their normalized $\LL$-distance is non-increasing.
As an immediate corollary we obtain a new proof of a recent result of Topping (J. reine angew. Math. 636 (2009), 93--122),
namely that the normalized $\LL$-transportation cost between two solutions of the heat equation is non-increasing as well.
\\

\noindent
{\bf Keywords:} Ricci flow, $\LL$-functional, Brownian motion, coupling.\\
{\bf AMS subject classification:} 53C44, 58J65, 60J65.   
\end{abstract}

\section{Introduction}
Let $M$ be a $d$-dimensional differentiable manifold, 
$0 \le \tauu_1 < \tauu_2 < T$ and $(g(\tau))_{\tau \in [\tauu_1, T]}$ a complete 
backwards Ricci flow on $M$, i.e.~a smooth family of Riemannian metrics satisfying
\begin{equation} \label{eq:Rf}
\frac{\partial g}{\partial \tau} = 2 \Ric_{g (\tau)}
\end{equation}
and such that $(M, g(\tau))$ is complete for all $\tau \in [\tauu_1, T]$.
In this situation Perelman~\cite[Section~7.1]{perelman1} (see also \cite[Definition~7.5]{chowetal}) defined the $\LL$-functional of a smooth curve $\gamma: [\tau_1, \tau_2] \to M$
(where $\tauu_1 \leq \tau_1 < \tau_2 \leq T$) by
\[
\LL(\gamma) := \int_{\tau_1}^{\tau_2} \sqrt{\tau} \left[ \left| \dot{\gamma}(\tau) \right|_{g(\tau)}^2 + R_{g(\tau)}(\gamma(\tau)) \right] d\tau,
\]
where $R_{g(\tau)}(x)$ is the scalar curvature at $x$ with respect to the metric $g(\tau)$.

Denoting by $L(x, \tau_1; y, \tau_2)$ the infimum of $\LL(\gamma)$ over smooth curves
$\gamma: [\tau_1, \tau_2] \to M$ satisfying $\gamma(\tau_1) = x$ and $\gamma(\tau_2) = y$, and by
\[
W^\LL(\mu, \tau_1; \nu, \tau_2) := \inf_\pi \int_{M \times M} L(x, \tau_1; y, \tau_2) \pi(dx, dy)
\]
(the infimum is over all probability measures $\pi$ on $M \times M$ whose marginals are $\mu$ and $\nu$)
the associated transportation cost between two probability measures $\mu$ and $\nu$ on $M$,
Topping~\cite{topping} (see also Lott~\cite{lott}) 
obtained the following result: 
\begin{theorem}[Theorem~1.1 in \cite{topping}]\label{topp}
Assume that $M$ is compact and that $\tauu_1 > 0$.
Let
$u: [\tauu_1, T] \times M \to \R_+$ 
and
$v: [\tauu_2, T] \times M \to \R_+$ 
be two non-negative unit-mass solutions of the heat equation
\[
\frac{\partial u}{\partial \tau} = \Delta_{g(\tau)} u - R u, 
\]
where the term $R u$ comes from the change in time of the volume element.  
Then the normalized $\LL$-transportation cost
\[
\bar{\Theta}(t) := 2 (\sqrt{\tauu_2 t} - \sqrt{\tauu_1 t}) W^\LL(u(\tauu_1 t, \cdot) \vol_{g(\tauu_1 t)}, \tauu_1 t; v(\tauu_2 t, \cdot) \vol_{g(\tauu_2 t)}, \tauu_2 t)
- 2 d \left( \sqrt{\tauu_2 t} - \sqrt{\tauu_1 t} \right)^2
\]
between the two solutions evaluated at times $\tauu_1 t$ resp.\ $\tauu_2 t$ is a non-increasing function of $t \in [1, T/\tauu_2]$.
\end{theorem}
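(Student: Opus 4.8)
The plan is to deduce the theorem from a coupling of Brownian motions, in the spirit of the Kendall--Cranston coupling ``along the minimal geodesic'', with the minimal $\LL$-geodesic now playing the role of the Riemannian geodesic. First a reduction. By parabolic regularity $u$ and $v$ are smooth, and — since $\partial_\tau\vol_{g(\tau)}=R\,\vol_{g(\tau)}$ under \eqref{eq:Rf} — the equation $\partial_\tau u=\Delta_{g(\tau)}u-Ru$ is exactly the Fokker--Planck equation of the time-inhomogeneous Brownian motion generated by $\Delta_{g(\tau)}$ and run forward in $\tau$: if $X$ is such a motion with $X_{\tauu_1}\sim u(\tauu_1,\cdot)\vol_{g(\tauu_1)}$, then $X_\tau\sim u(\tau,\cdot)\vol_{g(\tau)}$ for all $\tau$, and likewise for $v$. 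Fix $1\le t_0<t_1\le T/\tauu_2$, let $\pi$ be an optimal coupling for $W^\LL(u(\tauu_1 t_0,\cdot)\vol_{g(\tauu_1 t_0)},\tauu_1 t_0;\,v(\tauu_2 t_0,\cdot)\vol_{g(\tauu_2 t_0)},\tauu_2 t_0)$ (it exists: $L$ is lower semicontinuous and $M$ compact), take $(X_{\tauu_1 t_0},Y_{\tauu_2 t_0})\sim\pi$, and let $(X,Y)$ evolve under a coupling specified below. Since $X_{\tauu_1 t}$ and $Y_{\tauu_2 t}$ always have the correct marginals, their joint law is admissible in $W^\LL$ at time $t_1$; so, writing $D(t):=2(\sqrt{\tauu_2 t}-\sqrt{\tauu_1 t})$ and
\[
\Phi(t):=D(t)\,\E\!\left[L(X_{\tauu_1 t},\tauu_1 t;Y_{\tauu_2 t},\tauu_2 t)\right]-\tfrac{d}{2}\,D(t)^2,
\]
we get $\bar\Theta(t_1)\le\Phi(t_1)$ and $\bar\Theta(t_0)=\Phi(t_0)$ (optimality of $\pi$). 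Hence it suffices to show that $t\mapsto\Phi(t)$ is non-increasing.

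The coupling is defined as follows. Off the $\LL$-cut locus $\Lcut$ there is, for $\tauu_1 t<\tauu_2 t$, a unique minimal $\LL$-geodesic $\gm_t:[\tauu_1 t,\tauu_2 t]\to M$ from $X_{\tauu_1 t}$ to $Y_{\tauu_2 t}$; build $X$ from a driving Brownian motion via the horizontal lift to the $g(\tau)$-orthonormal frame bundle, and build $Y$ by transporting the increments of that driving motion along $\gm_t$ using the modification of parallel transport dictated by the $\LL$-geometry (spatial parallel transport of $g(\tau)$ twisted by the $\Ric$-term of \eqref{eq:Rf}, whose endpoint lengths scale like $\sqrt{\tauu_1 t}$ resp.\ $\sqrt{\tauu_2 t}$ — precisely the fields along which Perelman's second-variation comparison is performed). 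That the dependence of $\gm_t$ on $(X,Y)$ still yields a well-defined coupling follows from local well-posedness of the SDE away from $\Lcut$; one checks, as usual, that $\gm_t\notin\Lcut$ for a.e.\ $t$ almost surely and that, by local semiconcavity of $L$, the contribution of any excursion to $\Lcut$ to the Itô formula below has the favorable (non-positive) sign.

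Next, Itô's formula and the differential inequality. Put $\Lambda(t):=L(X_{\tauu_1 t},\tauu_1 t;Y_{\tauu_2 t},\tauu_2 t)$. Applying Itô's formula to $L$ off $\Lcut$ gives $d\Lambda(t)=dM_t+\mathcal{A}(t)\,dt$ with $M$ a local martingale and
\[
\mathcal{A}(t)=\tauu_1\!\left(\partial_{\tau_1}L+\Delta_x L\right)+\tauu_2\!\left(\partial_{\tau_2}L+\Delta_y L\right)+\mathcal{C}(t),
\]
where $\mathcal{C}(t)=2\sqrt{\tauu_1\tauu_2}\sum_i\Hess_{xy}L(\mathsf{E}_i,\mathsf{P}_t\mathsf{E}_i)$ is the cross term produced by $d\langle X,Y\rangle$ under the coupling ($\mathsf{E}_i$ an orthonormal frame at $X_{\tauu_1 t}$, $\mathsf{P}_t$ the transport along $\gm_t$), so that the whole second-order part of Itô equals $\sum_i I_t(V_i,V_i)$ with $I_t$ the index form of $\LL$ along $\gm_t$ and $V_i$ Perelman's test field. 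Now invoke Perelman's first-variation formulas for $\partial_{\tau_1}L,\partial_{\tau_2}L,\nabla_x L,\nabla_y L$ together with his second-variation (trace) estimate — used at the $y$-end to bound $\Delta_y L$ and, by running the same variation from the $x$-end, to bound $\Delta_x L$ (see \cite{perelman1}, \cite{chowetal}) — to obtain the pointwise bound
\[
\mathcal{A}(t)\ \le\ d\,D'(t)-\frac{D'(t)}{D(t)}\,\Lambda(t),
\]
which is exactly the identity one gets in the flat model, the various curvature corrections (the scalar-curvature terms and Hamilton's integrand along $\gm_t$) entering with the sign that turns the flat identity into an inequality. Finally, since $M$ is compact and $u,v$ smooth, $M$ is a true martingale and $\frac{d}{dt}\E[\Lambda(t)]=\E[\mathcal{A}(t)]$; substituting the bound into
\[
\frac{d}{dt}\Phi(t)=D'(t)\,\E[\Lambda(t)]+D(t)\,\frac{d}{dt}\E[\Lambda(t)]-d\,D(t)D'(t)
\]
and using $D'(t)=D(t)/(2t)$ gives $\frac{d}{dt}\Phi(t)\le 0$ — the same computation that yields equality in the flat case. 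Integrating over $[t_0,t_1]$ and combining with the reduction proves the theorem.

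The main obstacle is the interface between the probabilistic and the Ricci-flow sides. On the probabilistic side one must make the geodesic coupling rigorous when its ``drift'' is defined through the minimal $\LL$-geodesic — an SDE with non-smooth coefficients — and verify, via semiconcavity of $L$ and an Itô--Tanaka/local-time argument, that excursions to $\Lcut$ cost nothing; this is standard in outline but must be carried out carefully. On the geometric side, Perelman's trace estimate is formulated with the basepoint fixed, whereas here both endpoints of $\gm_t$ move and the weight $\sqrt{\tau}$ breaks the time-reversal symmetry; one must therefore combine the $y$-end estimate with its $x$-end counterpart and keep precise track of the curvature integrals along $\gm_t$ so that they cancel in $\mathcal{A}(t)$ against the terms coming from $\partial_{\tau_1}L,\partial_{\tau_2}L$. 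Getting this bookkeeping to close is the real content of the argument; the surrounding stochastic calculus is the routine assembly modelled on the flat computation above.
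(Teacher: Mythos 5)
Your overall strategy is the same as the paper's: reduce the statement to a coupling result by starting two Brownian motions from an optimal coupling $\pi$ for $W^\LL$ at time $t_0$, couple them so that the increments of the second are obtained from those of the first by the $\Ric$-twisted (``space-time'') parallel transport along the minimal $\LL$-geodesic joining $(X_{\tauu_1 t},\tauu_1 t)$ to $(Y_{\tauu_2 t},\tauu_2 t)$, and then combine It\^o's formula with Perelman's first and second variation formulas (with variation fields scaling like $\sqrt{\tau/t}$ times a space-time parallel field) to get the differential inequality that makes the normalized quantity a supermartingale; this is exactly Section~3 of the paper together with the short argument deducing Theorem~\ref{topp1} from Theorem~\ref{mainresult}, and your bookkeeping of the variation terms is the content of Proposition~\ref{lambda}.

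The genuine gap is the treatment of the $\LL$-cut locus, which compactness of $M$ does not remove. Your proposal rests on two unproved assertions: (i) that the coupling SDE is well posed when its coefficients are defined through the minimal $\LL$-geodesic, which is neither unique nor smoothly varying on $\Lcut$ (off $\Lcut$ local well-posedness is fine, but you must rule out, or handle, hitting $\Lcut$); and (ii) that ``as usual'' $(X_{\tauu_1 t},\tauu_1 t;Y_{\tauu_2 t},\tauu_2 t)\notin\Lcut$ for a.e.\ $t$ a.s., and that a local semiconcavity of $L$ lets an It\^o--Tanaka/local-time argument absorb the cut-locus contribution with the favorable sign. Neither step is standard in this setting; no semiconcavity estimate for $L$ near $\Lcut$ of the required form is established (or cited), and this is precisely the strategy the authors describe in Remark~\ref{rem:gRW} and then deliberately do \emph{not} carry out. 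Instead, the paper's actual proof of Theorem~\ref{mainresult} (hence of the statement at hand) replaces the SDE by coupled geodesic random walks: at a cut point the minimal $\LL$-geodesic is split at the intermediate time $\tau_n^*$, the triangle inequality for $L$ yields a one-step difference inequality (Lemma~\ref{lem:2var-gRW}) whose second-order term is controlled in conditional expectation by Proposition~\ref{lambda}, and uniform error bounds plus tightness give the supermartingale property in the limit. So your argument is the right heuristic, but at the one point where real work is needed you assert rather than prove; as written the proof does not close, and closing it along your route would require new estimates (semiconcavity of $L$ across $\Lcut$, a.e.\ avoidance of $\Lcut$, well-posedness with measurable selection of geodesics) that are not available off the shelf.
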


By $g(\tau)$-Brownian motion, 
we mean 
the time-inhomogeneous diffusion process 
whose generator is $\Delta_{g(\tau)}$. 
As in the time-homogeneous case, 
the heat distribution 
$u ( \tau , \cdot ) \vol_{g(\tau)}$ is expressed 
as the law of a $g(\tau)$-Brownian motion 
at time $\tau$. 
In view of this strong relation between heat equation and Brownian motion, it is natural to ask whether one can couple two Brownian motions on $M$
in such a way that a pathwise analogue of this result involving the function
\[
\Theta(t,x,y) := 2 \left( \sqrt{\tauu_2 t} - \sqrt{\tauu_1 t} \right) L(x, \tauu_1 t; y, \tauu_2 t) - 2 d \left( \sqrt{\tauu_2 t} - \sqrt{\tauu_1 t} \right)^2.
\]
holds. 
The main result of this paper answers it affirmatively 
as follows: 
\begin{theorem}\label{mainresult}
Assume that $M$ has bounded curvature tensor, 
i.e.
\begin{equation} \label{eq:kb}
\sup_{x \in M, \tau \in [\tauu_1, T]} |\Rm_{g(\tau)}|_{g(\tau)}(x) < \infty.
\end{equation}
Then given any points $x,y \in M$ and any $s \in [1, T/\tauu_2]$,
there exist two coupled $g(\tau)$-Brownian motions
$(X_\tau)_{\tau \in [\tauu_1 s, T]}$
and
$(Y_\tau)_{\tau \in [\tauu_2 s , T ]}$
with initial values $X_{\tauu_1 s} = x$ and $Y_{\tauu_2 s} = y$ 
such that
the process $(\Theta(t, X_{\tauu_1 t}, Y_{\tauu_2 t} ))_{t \in [s, T/\tauu_2]}$ 
is a supermartingale.
In particular $\E \left[ \Theta(t, X_{\tauu_1 t}, Y_{\tauu_2 t} ) \right]$ is non-increasing.
In addition, we can take them so that
the map $(x,y) \mapsto (X,Y)$ is measurable.
\end{theorem}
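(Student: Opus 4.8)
The plan is to construct an explicit coupling --- the parallel coupling along minimal $\LL$-geodesics --- and to reduce the supermartingale assertion, via It\^o's formula, to a pointwise inequality for the drift of $t\mapsto\Theta(t,X_{\tauu_1 t},Y_{\tauu_2 t})$, which is then checked from the first and second variation formulas for $\LL$. For the coupling, observe that a $g(\tau)$-Brownian motion read on the time scale $t\mapsto\tauu_i t$ is a diffusion whose generator in the $t$-clock is $\tauu_i\Delta_{g(\tauu_i t)}$; realize $X$ as $X_{\tauu_1 t}=\pi(U^X_t)$ where $U^X$ solves the horizontal SDE on the time-dependent orthonormal frame bundle driven by $\sqrt{2\tauu_1}\,dB$ with $B$ a standard $\R^d$-Brownian motion. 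At a time $t$ with $X_{\tauu_1 t}=p$, $Y_{\tauu_2 t}=q$, and $q$ outside the $\LL$-cut locus of $(p,\tauu_1 t)$, there is a unique minimal $\LL$-geodesic $\gamma_t\colon[\tauu_1 t,\tauu_2 t]\to M$ from $(p,\tauu_1 t)$ to $(q,\tauu_2 t)$; I would drive $Y$ by the \emph{same} $B$, read through the $g(\tau)$-metric-compatible parallel transport $\mathcal P_{\gamma_t}\colon T_pM\to T_qM$ along $\gamma_t$ (the Ricci-corrected transport that is an isometry for the evolving metric), with infinitesimal variance $2\tauu_2$; the orthogonal change of frame keeps $Y$ a genuine $g(\tau)$-Brownian motion. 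On the flat model $M=\R^d$, where the flow is trivial and $L(x,\tau_1;y,\tau_2)=|x-y|^2/\big(2(\sqrt{\tau_2}-\sqrt{\tau_1})\big)$, this is the synchronous coupling, and a direct calculation shows that the normalizing term $2d(\sqrt{\tauu_2 t}-\sqrt{\tauu_1 t})^2$ is precisely what renders $\Theta$ a martingale; curvature will make it a supermartingale.

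Writing $a(t):=2(\sqrt{\tauu_2 t}-\sqrt{\tauu_1 t})$ and $L_t:=L(X_{\tauu_1 t},\tauu_1 t;Y_{\tauu_2 t},\tauu_2 t)$, one has $\Theta(t,X_{\tauu_1 t},Y_{\tauu_2 t})=a(t)L_t-\tfrac{d}{2}a(t)^2$, and It\^o's formula --- using that the generator of the coupled space-time process is $\partial_t+\tauu_1\Delta_x+\tauu_2\Delta_y$ plus the cross term of the common noise --- shows that its drift equals $a(t)$ times
\[
\tauu_1\partial_{\tau_1}L+\tauu_2\partial_{\tau_2}L+\tauu_1\Delta_x L+\tauu_2\Delta_y L+2\sqrt{\tauu_1\tauu_2}\,\big\langle\nabla_x\nabla_y L,\mathcal P_{\gamma_t}\big\rangle-\frac{1}{2t}\big(d\,a(t)-L_t\big),
\]
where $\partial_{\tau_j}$ denotes differentiation in the $j$-th time argument of $L$ and $\big\langle\nabla_x\nabla_y L,\mathcal P_{\gamma_t}\big\rangle:=\sum_i\Hess L\big|_{(p,q)}\big((e_i,0),(0,\mathcal P_{\gamma_t}e_i)\big)$ for an orthonormal basis $(e_i)$ of $T_pM$. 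Since $a(t)>0$, the supermartingale property is equivalent to the first five terms being bounded above by $\tfrac{1}{2t}\big(d\,a(t)-L_t\big)=d\,a'(t)-\tfrac{L_t}{2t}$.

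To verify this I would use the first variation identities (see \cite{perelman1,chowetal}) $\nabla_x L=-2\sqrt{\tauu_1 t}\,\dot\gamma_t(\tauu_1 t)$, $\nabla_y L=2\sqrt{\tauu_2 t}\,\dot\gamma_t(\tauu_2 t)$, $\partial_{\tau_1}L=\tfrac{1}{4\sqrt{\tauu_1 t}}|\nabla_x L|^2-\sqrt{\tauu_1 t}\,R$, $\partial_{\tau_2}L=\sqrt{\tauu_2 t}\,R-\tfrac{1}{4\sqrt{\tauu_2 t}}|\nabla_y L|^2$, together with the fact that, away from its cut locus, $L(\cdot,\tauu_1 t;\cdot,\tauu_2 t)$ is dominated to second order along $\gamma_t$ by the second variation of $\LL$: for variation fields $W_i$ along $\gamma_t$ with $W_i(\tauu_1 t)=\sqrt{\tauu_1}\,e_i$ and $W_i(\tauu_2 t)=\sqrt{\tauu_2}\,\mathcal P_{\gamma_t}e_i$ one has $\tauu_1\Delta_x L+\tauu_2\Delta_y L+2\sqrt{\tauu_1\tauu_2}\langle\nabla_x\nabla_y L,\mathcal P_{\gamma_t}\rangle\le\sum_i\delta^2_{W_i}\LL(\gamma_t)$. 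Choosing $W_i(\tau)=h(\tau)E_i(\tau)$ with $(E_i)$ the Ricci-corrected parallel frame and $h$ the profile interpolating $h(\tauu_1 t)=\sqrt{\tauu_1}$, $h(\tauu_2 t)=\sqrt{\tauu_2}$ that is stationary for $\int\sqrt\tau\,h'(\tau)^2\,d\tau$ (so $h'(\tau)\propto\tau^{-1/2}$, the choice dictated by the flat case), I would insert Perelman's formula for $\delta^2\LL$, integrate by parts using the $\LL$-geodesic equation, and combine with the $\partial_{\tau_j}L$ identities. The point that makes the statement hold with no curvature sign assumption --- only the bounded-curvature hypothesis \eqref{eq:kb}, which is needed for the analytic well-posedness below --- is that the Hamilton-type trace-Harnack terms produced by the $\partial_{\tau_j}L$ identities cancel against those produced by $\delta^2\LL$; what remains is exactly the inequality $[\ \cdots\ ]\le\tfrac{1}{2t}\big(d\,a(t)-L_t\big)$, which (as the flat computation shows) is an identity in the flat case and an inequality in general.

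I expect two obstacles. The analytic one: $L$ is only locally Lipschitz and locally semiconcave, smooth away from the $\LL$-cut locus but not across it, so It\^o's formula and the drift computation must be justified by a localization and approximation argument exploiting semiconcavity (the non-smooth second-order directions contribute with the favourable sign, so it suffices to establish the drift inequality at points of twice differentiability, which have full measure), together with the fact that the coupled diffusion spends no positive time on the cut locus; one must also verify that the coupled SDE on $M\times M$ --- a coupling by a common Brownian motion read through parallel transport, in the spirit of the couplings used for Wasserstein contraction under geometric flows, here in the time-inhomogeneous Ricci-flow setting --- is well posed and non-explosive, for which the bounded-curvature hypothesis (which along the flow on the compact interval $[\tauu_1,T]$ keeps the metrics uniformly equivalent and controls the drift coming from the $\LL$-geodesic field) is exactly what is needed, and that the minimal $\LL$-geodesic can be selected measurably in $(p,q)$, which also gives the measurability of $(x,y)\mapsto(X,Y)$ asserted at the end. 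The conceptual one, and the place where the real work lies, is the bookkeeping in the step just described: assembling Perelman's first and second variation estimates at \emph{both} endpoints with the precise weights $\tauu_1,\tauu_2$ and the precise interpolating profile $h$ so that the Harnack terms cancel and the surviving inequality is exactly tight (the flat case forces tightness), and checking that the chosen coupling is the one realizing the extremal variation fields $W_i$.
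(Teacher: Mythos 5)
Your drift computation is exactly the paper's: the coupling by space-time (Ricci-corrected) parallel transport along the minimal $\LL$-geodesic, the variation fields $W_i(\tau)=h(\tau)E_i(\tau)$ with $h(\tau)\propto\sqrt{\tau}$, and the cancellation against the $\partial_{\tau_1}L$, $\partial_{\tau_2}L$ terms reproduce the paper's Lemma~\ref{itoformula} and Proposition~\ref{lambda}, and under the assumption that the $\LL$-cut locus is empty this is precisely how the paper argues. The gap is in how you treat the cut locus, which is the actual content of the theorem. First, well-posedness of your coupled SDE is not secured: the transport map along the minimizer is smooth only off $\Lcut$ and is genuinely discontinuous on it (non-uniqueness of minimizers), so the second equation has merely measurable coefficients; the bounded-curvature hypothesis \eqref{eq:kb} controls the geometry and non-explosion but does not repair this, so your statement that it is ``exactly what is needed'' for well-posedness is not correct. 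Second, your plan to push It\^o's formula through the cut locus via local semiconcavity, a.e.\ twice differentiability, and the claim that the coupled process spends zero time on $\Lcut$ is exactly the strategy the paper describes in Remark~\ref{rem:gRW} (an It\^o formula with a ``local time at the $\LL$-cut locus'' of favourable sign) and then deliberately declines to carry out because none of these ingredients is available off the shelf in this time-inhomogeneous, degenerately coupled setting; as written they are assertions, not proofs.

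The paper's actual route avoids both issues: it constructs the coupling as a subsequential limit in law of coupled geodesic random walks, which require only a measurable selection of minimal $\LL$-geodesics (no SDE with singular coefficients is ever solved), and it replaces the It\^o drift identity by a discrete difference inequality (Lemma~\ref{lem:2var-gRW}). The key device there is to split the minimal $\LL$-geodesic at an intermediate time $\tau_n^*$; each half then has endpoints outside $\Lcut$, so Taylor expansion plus the triangle inequality for $L$ gives the one-step inequality even when the pair of endpoints lies on the cut locus, and closedness of $\Lcut$ yields uniform $o(\eps^2)$ error control on compacta. The supermartingale property then follows by conditioning (which is where Proposition~\ref{lambda} enters, applied to the conditional expectation of the second-order term), Doob's inequality, the moment bound of Lemma~\ref{lem:i'bility}, and passage to the limit. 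If you want to complete your version, the missing work is precisely a rigorous It\^o--Tanaka-type inequality for $L$ across $\Lcut$ for this coupling together with an existence theory for the coupled process; the paper shows how to sidestep both.
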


\begin{remark}
Obviously, \eqref{eq:kb} is satisfied if $M$ is compact.
Thus it includes the case of Theorem~\ref{topp}.
In addition,
there are plenty of examples of backwards Ricci flow 
satisfying \eqref{eq:kb} even when $M$ is non-compact. 
Indeed, 
given a metric $g_0$ on $M$ with bounded curvature tensor, 
there exists a unique solution to the Ricci flow 
$\partial_t g (t) = - 2 \Ric_{g(t)}$ 
with initial condition $g_0$ 
satisfying \eqref{eq:kb} 
for a short time 
(see \cite{shi} for existence and 
\cite{chenzhu} for uniqueness). 
Then the corresponding backwards Ricci flow 
is obtained by time-reversal.  
\end{remark}

\begin{remark}
As shown in \cite{kuwadaphilipowski}, under backwards Ricci flow $g(\tau)$-Brownian motion cannot explode. Hence $\Theta(t,X_t,Y_t)$ is well-defined for all $t \in [s, T/\tauu_2]$. 
This fact also ensures that $u (\tau , \cdot) \vol_{g(\tau)}$ 
has unit mass whenever it does at the initial time. 
\end{remark}

Using Theorem~\ref{mainresult} we can prove 
Topping's result even in the non-compact case. 

\begin{theorem} \label{topp1}
Assume that \eqref{eq:kb} holds. 
Then the same assertion as in Theorem~\ref{topp} 
holds true 
for nonegative unit mass solutions $u$ and $v$ 
to the heat equation 
and the associated functional $\bar{\Theta} (t)$. 
\end{theorem}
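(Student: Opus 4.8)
\textbf{Proof proposal for Theorem~\ref{topp1}.}
The plan is to deduce the statement from the pathwise result of Theorem~\ref{mainresult} by lifting an arbitrary coupling of the initial measures to a coupling of Brownian motions, using the measurability of the map $(x,y)\mapsto (X,Y)$ asserted there. Fix $s,t\in[1,T/\tauu_2]$ with $s\le t$, and set $\mu:=u(\tauu_1 s,\cdot)\vol_{g(\tauu_1 s)}$ and $\nu:=v(\tauu_2 s,\cdot)\vol_{g(\tauu_2 s)}$. Let $\pi$ be any coupling of $\mu$ and $\nu$. Using the measurable family $(x,y)\mapsto(X^{x,y},Y^{x,y})$ provided by Theorem~\ref{mainresult} (with the given $s$), I would let $(X,Y)$ be this family with initial data $(X_{\tauu_1 s},Y_{\tauu_2 s})$ distributed according to $\pi$, realized on the product of path space with $(M\times M,\pi)$. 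Since for $\pi$-a.e.\ $(x,y)$ the process $(\Theta(r,X^{x,y}_{\tauu_1 r},Y^{x,y}_{\tauu_2 r}))_{r\in[s,T/\tauu_2]}$ is a supermartingale, and since $\Theta$ is bounded below on the relevant range (the scalar curvature is bounded below by \eqref{eq:kb}, hence so is $L$, so no expectation below is of the form $\infty-\infty$), Fubini's theorem shows that $(\Theta(r,X_{\tauu_1 r},Y_{\tauu_2 r}))_r$ is a supermartingale as well; in particular
\[
\E\!\left[\Theta(t,X_{\tauu_1 t},Y_{\tauu_2 t})\right]\ \le\ \E\!\left[\Theta(s,X_{\tauu_1 s},Y_{\tauu_2 s})\right]\ =\ \int_{M\times M}\Theta(s,x,y)\,\pi(dx,dy).
\]

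Next I would identify the marginal laws at time $t$. Each $X^{x,y}$ is a $g(\tau)$-Brownian motion started at $x$ at time $\tauu_1 s$; by the non-explosion result of \cite{kuwadaphilipowski} its time-$\tau$ distribution has a density $p(\tauu_1 s,x;\tau,\cdot)$ with respect to $\vol_{g(\tau)}$ which is the minimal positive solution of $\partial_\tau p=\Delta_{g(\tau)}p-Rp$ and has total mass $1$. Hence the time-$(\tauu_1 t)$ law of $X$ is $\big(\int_M p(\tauu_1 s,x;\tauu_1 t,\cdot)\,u(\tauu_1 s,x)\vol_{g(\tauu_1 s)}(dx)\big)\vol_{g(\tauu_1 t)}$, which by uniqueness of the non-negative unit-mass solution of the heat equation along the flow (available under \eqref{eq:kb}) coincides with $u(\tauu_1 t,\cdot)\vol_{g(\tauu_1 t)}$; similarly $Y_{\tauu_2 t}\sim v(\tauu_2 t,\cdot)\vol_{g(\tauu_2 t)}$. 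Thus the law of $(X_{\tauu_1 t},Y_{\tauu_2 t})$ is a coupling of $u(\tauu_1 t,\cdot)\vol_{g(\tauu_1 t)}$ and $v(\tauu_2 t,\cdot)\vol_{g(\tauu_2 t)}$, and since the constant $2d(\sqrt{\tauu_2 t}-\sqrt{\tauu_1 t})^2$ pulls out of a total-mass-one integral, the definition of $W^\LL$ gives
\[
\bar{\Theta}(t)\ =\ \inf_{\tilde\pi}\int_{M\times M}\Theta(t,x,y)\,\tilde\pi(dx,dy)\ \le\ \E\!\left[\Theta(t,X_{\tauu_1 t},Y_{\tauu_2 t})\right].
\]
Combining the last two displays yields $\bar{\Theta}(t)\le\int_{M\times M}\Theta(s,x,y)\,\pi(dx,dy)$ for every coupling $\pi$ of $\mu$ and $\nu$; taking the infimum over $\pi$ gives $\bar{\Theta}(t)\le\bar{\Theta}(s)$. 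As $s\le t$ were arbitrary in $[1,T/\tauu_2]$, $\bar{\Theta}$ is non-increasing.

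The routine ingredients are the Fubini argument mixing the supermartingale over $\pi$ and pulling the constant out of $W^\LL$. The main obstacle is the marginal identification: one must know that the time-$\tau$ law of $g(\tau)$-Brownian motion started from $u(\tauu_1 s,\cdot)\vol_{g(\tauu_1 s)}$ is exactly $u(\tau,\cdot)\vol_{g(\tau)}$. This rests on (i) non-explosion/conservativeness of $g(\tau)$-Brownian motion under \eqref{eq:kb}, established in \cite{kuwadaphilipowski}, which forces the heat kernel to be mass-preserving, and (ii) a uniqueness statement for non-negative unit-mass solutions of the heat equation along the flow, again obtained from the bounded-curvature hypothesis; one should also verify that the process built from the measurable family is genuinely a pair of $g(\tau)$-Brownian motions with the prescribed initial law, which is immediate once the measurability asserted in Theorem~\ref{mainresult} is available. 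A secondary technical point, used only to keep all expectations well defined, is the lower bound on $L$ inherited from the lower bound on scalar curvature.
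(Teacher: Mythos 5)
Your proposal is correct and takes essentially the same route as the paper: lift a coupling of the initial laws to a coupling of Brownian motions via the measurable family from Theorem~\ref{mainresult}, use the supermartingale property of $\Theta(t,X_{\tauu_1 t},Y_{\tauu_2 t})$, and compare with $W^\LL$ at time $t$. The only (harmless) deviation is that you start from an arbitrary coupling $\pi$ and take the infimum at the end, whereas the paper starts from an optimal coupling (whose existence it justifies via Villani's Theorem~4.1 and the lower bound on $L$), so your version even spares that existence step; your extra care about marginal identification and integrability matches what the paper treats as known from \cite{kuwadaphilipowski}.
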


\begin{proof}[Proof of Theorem~\ref{topp1} using Theorem~\ref{mainresult}]
Fix $1 \leq s < t \leq T / \tauu_2$, and let $\pi$ be an optimal coupling of $u(\tauu_1 s, \cdot) \vol_{g(\tauu_1 s)}$ and $v(\tauu_2 s, \cdot) \vol_{g(\tauu_2 s)}$.
(Existence of an optimal coupling follows from \cite[Theorem~4.1]{villani}, using the obvious lower bound
$L(x, \tau_1; y, \tau_2) \geq \frac{2}{3} (\tau_2^{3/2} - \tau_1^{3/2} ) \inf_{x \in M, \tau \in [\tauu_1, T]} R_{g(\tau)}(x)$.)
For each $(x,y) \in M \times M$, we take coupled Brownian motions
$(X_{\tau}^x)_{\tau \in [\tauu_1 s, T]}$ and $(Y_{\tau}^y)_{\tau \in [\tauu_2 s, T]}$
with initial values $X_{\tauu_1 s}^x = x$ and $Y_{\tauu_2 s}^y = y$ 
as in Theorem~\ref{mainresult}. 
Since $(x,y) \mapsto (X^x , Y^y)$ is measurable, 
we can construct a coupling of two Brownian motions 
$(X, Y)$ with initial distribution $\pi$ 
by following a usual manner. 
%
%
Then 
the joint distribution of $X_{\tauu_1 t}$ and $Y_{\tauu_2 t}$ is a coupling of $u(\tauu_1 t, \cdot) \vol_{g(\tauu_1 t)}$ and $v(\tauu_2 t, \cdot) \vol_{g(\tauu_2 t)}$, so that
$\bar\Theta(t) \leq \E \left[ \Theta(t, X_{\tauu_1 t} , Y_{\tauu_2 t} ) \right] \leq \E \left[ \Theta(s, X_{\tauu_1 s} , Y_{\tauu_2 s} ) \right] = \bar\Theta(s)$.
\end{proof}

\section{Remarks concerning related work}

The Ricci flow was introduced by Hamilton~\cite{hamilton}. 
There he effectively used it 
to solve the Poincar\'{e} conjecture 
for 3-manifolds with positive Ricci curvature. 
By following his approach, 
Perelman~\cite{perelman1, perelman2, perelman3} 
finally solved the Poincar\'{e} conjecture 
(see also \cite{caozhu,kleinerlott, morgantian}). 
There he used $\LL$-functional as a crucial tool. 
At the same stage,  
he also studied the heat equation in \cite{perelman1} 
in relation with the geometry of Ricci flows. 
It suggests that analysing the heat equation is 
still an efficient way 
to investigate geometry of the underlying space 
even in the time-dependent metric case. 
This general principle has been confirmed 
in recent developments in this direction. 
In connection with the theory of optimal transportation, 
McCann and Topping~\cite{mccanntopping} showed contraction in the $L^2$-Wasserstein distance 
for the heat equation under backwards Ricci flow on a compact manifold. 
Topping's result~\cite{topping} can be regarded as 
an extension of it 
to contraction in the normalized $\LL$-transportation cost 
(see \cite{lott} also). 
By taking $\tauu_2 \to \tauu_1$, 
he recovered the monotonicity of Perelman's $\mathcal{W}$-entropy, 
which is one of fundamental ingredients in Perelman's work. 

A probabilistic approach to these problems is initiated 
by Arnaudon, Coulibaly and Thalmaier. 
In \cite[Section 4]{act2}, they 
sharpened McCann and Topping's result \cite{mccanntopping} 
to a pathwise contraction in the following sense:
There is a coupling $(X_t, Y_t)_{t \geq 0}$ of two Brownian motions starting from $x, y \in X$ respectively 
such that the $g(t)$-distance between $X_t$ and $Y_t$ is non-increasing in $t$ almost surely. 
In their approach, probabilistic techniques 
based on analysis of sample paths 
made it possible to establish such a pathwise estimate. 
It should be mentioned that, 
as another advantage of their approach, 
their argument works even on non-compact $M$ (cf.~\cite{kuwadaphilipowski}). 
Our approach is the same as theirs in spirit. 
In fact, such advantages are also inherited to our results. 
Unfortunately, 
we cannot expect a pathwise contraction as theirs 
since our problem differs in nature 
from what is studied in \cite{act} (see Remark~\ref{rem:mart}). 
However, it should be noted that 
this new fact is revealed 
as a result of our pathwise arguments. 
Furthermore, we can expect that 
our approach makes it possible 
to employ several techniques 
in stochastic analysis 
to obtain more detailed behavior of 
$\Theta (t , X_{\tauu_1 t} , Y_{\tauu_2 t} )$, 
especially in the limit $\tauu_2 \to \tauu_1$, 
in a future development. 
Note that, from technical point of view, 
our method relies on the result in \cite{kuwada2} and 
it is different from Arnaudon, Coulibaly and Thalmaier's one.

\section{Coupling of Brownian motions in the absence of $\LL$-cut locus}
\label{sec:SDE}
%
Since the proof of Theorem~\ref{mainresult} 
involves some technical arguments, 
first we study the problem in the case that 
the $\LL$-distance $L$ has no singularity. 
More precisely,
\begin{assump} \label{ass:empty} 
The $\LL$-cut locus is empty. 
\end{assump}
See subsection~\ref{sec:geom} or \cite{chowetal,topping,ye} 
for the definition of $\LL$-cut locus. 
Under Assumption~\ref{ass:empty}, 
the following holds: 
\begin{enumerate}
\item For all $x,y \in M$ and all $\tauu_1 \leq \tau_1 < \tau_2 \leq T$ there is a 
unique minimizer $\gamma_{xy}^{\tau_1 \tau_2}$ of 
$L ( x, \tau_1 ; y , \tau_2 )$
(existence of $\gamma_{xy}^{\tau_1 \tau_2}$ is proved in \cite[Lemma~7.27]{chowetal}, while uniqueness
follows immediately from the characterization of $\LL$-cut locus, 
see subsection \ref{sec:geom}). 
\item The function $L$ is globally smooth.
\end{enumerate}
Thus, in this case, 
we can freely use stochastic analysis on the frame bundle 
without taking any care on regularity of $L$. 
In section~\ref{randomwalks}, 
we present the complete proof of Theorem~\ref{mainresult} 
using a random walk approximation 
(see Remark~\ref{rem:gRW} for further details on the choice of our approach). 

\subsection{Construction of the coupling}
A $g(\tau)$-Brownian motion $\tilde{X}$ on $M$ (scaled in time by the factor $\tauu_1$) starting at a point $x \in M$ at time $s \in [1, T/\tauu_2]$
can be constructed in the following way \cite{act, coulibaly, kuwadaphilipowski}:
Let $\pi: \F(M) \to M$ be the frame bundle and $(e_i)_{i = 1}^d$ the standard basis of $\R^d$.
For each $\tau \in [\tauu_1, T]$ let $(H_i(\tau))_{i=1}^d$ be the associated $g(\tau)$-horizontal vector fields on $\F(M)$ (i.e.~$H_i(\tau, u)$ is the $g(\tau)$-horizontal lift of $u e_i$).
Moreover let $(\V^{\alpha, \beta})_{\alpha, \beta = 1}^d$ be the canonical vertical vector fields,
i.e.~$(\V^{\alpha, \beta} f)(u) := \left. \frac{\partial}{\partial m_{\alpha \beta}} \right|_{\mathbf{m} = \Id} \left( f(u(\mathbf{m})) \right)$
($\mathbf{m} = (m_{\alpha \beta})_{\alpha, \beta = 1}^d \in GL_d(\R)$),
and let $(W_t)_{t \geq 0}$ be a standard $\R^d$-valued Brownian motion.
By $\OO^{g(\tau)} (M)$, we denote the $g(\tau)$-orthonormal frame bundle. 

We first define 
a horizontal Brownian motion on $ \F(M)$ 
as the solution 
$\tilde{U} = (\tilde{U}_t)_{t \in [s, T/\tauu_1]}$ 
of the Stratonovich SDE
\begin{equation}\label{gtbm}
d\tilde{U}_t 
= 
\sqrt{2 \tauu_1} \sum_{i=1}^d H_i(\tauu_1 t, \tilde{U}_t) \circ dW_t^i
- 
\tauu_1 \sum_{\alpha, \beta = 1}^d 
\frac{ \partial g }{\partial \tau} (\tauu_1 t) 
(\tilde{U}_t e_\alpha, \tilde{U}_t e_\beta) 
\V^{\alpha \beta}(\tilde{U}_t) dt
\end{equation}
with initial value 
$\tilde{U}_s = u \in \OO^{g(\tauu_1 s)}_x (M)$, 
and then define a scaled Brownian motion $\tilde{X}$ on $M$ as
\[
\tilde{X}_t := \pi \tilde{U}_t.
\]
Note that $\tilde{X}_t$ does not move when $\tauu_1  =0$.
The last term in \eqref{gtbm} ensures that 
$\tilde{U}_t \in \OO^{g(\tauu_1 t)}(M)$ for all $t \in [s, T/\tauu_1]$
(see \cite[Proposition~1.1]{act}, \cite[Proposition~1.2]{coulibaly}), 
so that 
by It\^o's formula for all smooth $f: [s, T/\tauu_1] \times M \to \R$
\[
df(t, \tilde{X}_t) 
= 
\frac{\partial f}{\partial t}(t, \tilde{X}_t) dt 
+ 
\sqrt{2 \tauu_1} \sum_{i = 1}^d (\tilde{U}_t e_i) f(t, \tilde{X}_t) d W_t^i 
+ 
\tauu_1 \Delta_{g(\tau_1 t)} f(t, \tilde{X}_t) dt.
\]
Let us define
$( X_\tau )_{\tau \in [ \tauu_1 s, T ]}$
by $X_{\tauu_1 t} : = \tilde{X}_t$.
Then $X_\tau$ becomes a $g(\tau)$-Brownian motion
when $\tauu_1 > 0$.
\begin{remark} \label{rem:time}
Intuitively, it might be helpful to think that 
$X_\tau$ lives in $( M , g(\tau))$, or 
$\tilde{X}_t$ lives in $( M , g (\tauu_1 t) )$. 
The same is true for $Y$ and $\tilde{Y}$ 
which will be defined below. 
Similarly, 
for all curves $\gamma : [ \tau_1 ,\tau_2 ] \to M$ 
in this paper, we can naturally regard 
$\gm (\tau)$ as in $(M , g(\tau) )$. 
\end{remark}
We now want to construct 
a second scaled Brownian motion $\tilde{Y}$ on $M$ 
in such a way that 
its infinitesimal increments $d \tilde{Y}_t$ are 
``space-time parallel'' to those of $\tilde{X}$
along the minimal $\LL$-geodesic (namely, the minimizer of $L$) 
from $( \tilde{X}_t , \tauu_1 t )$ to $( \tilde{Y}_t , \tauu_2 t )$.
To make this idea precise, 
we first define the notion of space-time parallel vector field:

\begin{definition}[space-time parallel vector field]
Let $\tauu_1 \leq \tau_1 < \tau_2 \leq T$ and $\gamma: [\tau_1, \tau_2] \to M$ be a smooth curve. We say that a vector field $Z$ along $\gamma$ is {\em space-time parallel} if
\begin{equation}\label{lparallel}
\nabla_{\dot{\gm}(\tau)}^{g(\tau)} Z(\tau) 
= 
-\Ric_{g(\tau)}^\# ( Z(\tau) )
\end{equation}
holds for all $\tau \in [\tau_1, \tau_2]$. 
Here $\nabla^{g(\tau)}$ stands for 
the covariant derivative associated with 
the $g(\tau)$-Levi-Civita connection and 
$\Ric_{g(\tau)}^\#$ is defined 
by regarding the $g(\tau)$-Ricci curvature as a (1,1)-tensor. 
\end{definition}

\begin{remark}
Since \eqref{lparallel} is a linear first-order ODE, for any $\xi \in T_{\gamma(\tau_1)}M$ there exists a unique space-time parallel vector field $Z$ along $\gamma$ with $Z (\tau_1) = \xi$.
\end{remark}

\begin{remark} \label{rem:isom}
Whenever $Z$ and $Z'$ are space-time parallel vector fields along a curve $\gamma$, their $g(\tau)$-inner product is constant in $\tau$:
\begin{align*}
\frac{d}{d \tau} \langle Z(\tau), Z'(\tau) \rangle_{g(\tau)}
& = 
\frac{\partial g }{\partial \tau}(\tau) (Z(\tau), Z'(\tau)) 
+ 
\langle 
  \nabla_{\dot{\gm} (\tau)}^{g(\tau)} Z(\tau)
  , 
  Z'(\tau) 
\rangle_{g(\tau)} 
+ 
\langle 
  Z(\tau)
  , 
  \nabla_{\dot{\gm} (\tau)}^{g(\tau)} Z'(\tau) 
\rangle_{g(\tau)} \\
& = 
2 \Ric_{g(\tau)} (Z(\tau), Z'(\tau)) 
- \Ric_{g(\tau)} (Z(\tau), Z'(\tau)) 
- \Ric_{g(\tau)} (Z(\tau), Z'(\tau)) 
\\
& = 0.
\end{align*}
\end{remark}
\begin{remark}
The emergence of the Ricci curvature 
in \eqref{lparallel} is based 
on the Ricci flow equation \eqref{eq:Rf}. 
Indeed, we can generalize 
the notion of space-time parallel transport 
even in the absence of \eqref{eq:Rf} 
with keeping the property in the last remark. 
This would be a natural extension in the sense that 
it coincides with the usual parallel transport 
when $g(\tau)$ is constant in $\tau$. 
On the other hand, it is convenient 
to define it as \eqref{lparallel}
for later use in this paper. 
\end{remark}

\begin{definition}[space-time parallel transport] \label{def:pt}
For $x,y \in M$ and $\tauu_1 \leq \tau_1 < \tau_2 \leq T$,
we define a map $m_{xy}^{\tau_1 \tau_2}: T_x M \to T_y M$ as follows: $m_{xy}^{\tau_1 \tau_2}(\xi) := Z(\tau_2)$,
where $Z$ is the unique space-time parallel vector field along $\gamma_{xy}^{\tau_1 \tau_2}$ with $Z (\tau_1) = \xi$.
By Remark~\ref{rem:isom}, 
$m_{xy}^{\tau_1 \tau_2}$ is an isometry 
from $( T_x M , g ( \tau_1 ) )$ 
to $( T_y M , g ( \tau_2 ) )$. 
In addition, it smoothly depends on $x, \tau_1 , y, \tau_2$ 
under Assumption~\ref{ass:empty}. 
\end{definition}

We now define a second horizontal scaled Brownian motion 
$\tilde{V} = (\tilde{V}_t)_{t \in [s, T/\tauu_2]}$ on $\F(M)$ 
as the solution of
\[
d\tilde{V}_t 
= 
\sqrt{2 \tauu_2} 
\sum_{i=1}^d H_i^*( \tilde{U}_t , \tauu_1 t ; \tilde{V}_t , \tauu_2 t ) 
\circ dW_t^i
- \tauu_2 
\sum_{\alpha, \beta = 1}^d 
\frac{\partial g}{\partial \tau}(\tauu_2 t)
(\tilde{V}_t e_\alpha, \tilde{V}_t e_\beta) 
\V^{\alpha \beta}(\tilde{V}_t) dt
\]
with initial value $\tilde{V}_s = v \in \OO^{g(\tauu_2 s)}_y (M)$, 
and we set $\tilde{Y}_t := \pi \tilde{V}_t$.
Here 
$H_i^*(u , \tau_1 ; v , \tau_2)$ is 
the $g(\tau_2)$-horizontal lift of $v e_i^*(u , \tau_1 ; v , \tau_2 )$, 
where
\[
e_i^* ( u , \tau_1 ; v , \tau_2 ) 
:= 
v^{-1} m_{\pi u, \pi v}^{\tau_1, \tau_2} u e_i.
\]
As we did for $\tilde{X}$, 
let us define 
$( Y_\tau )_{\tau \in [ \tauu_2 s , T ]}$ 
by $Y_{\tauu_2 t} := \tilde{Y}_t$ 
to make $Y$ a $g(\tau)$-Brownian motion. 
From theoretical point of view, 
it seems to be natural to work with $( X_\tau , Y_\tau )$ 
(see Remark~\ref{rem:time}). 
However, for technical simplicity, 
we will prefer to work with $( \tilde{X}_t , \tilde{Y}_t )$ instead 
in the sequel. 

\subsection{Proof of Theorem~\ref{mainresult} in the absence of $\LL$-cut locus}
Our argument in this section is based on 
the following It\^o's formula for $( \tilde{X}_t , \tilde{Y}_t )$. 

\begin{lemma}\label{itoformula}
Let $f$ be a smooth function on $[s, T / \tauu_2] \times M \times M$. Then
\begin{multline*}
d f(t, \tilde{X}_t, \tilde{Y}_t) 
= 
\frac{\partial f}{\partial t}(t, \tilde{X}_t, \tilde{Y}_t) dt 
+ 
\sum_{i = 1}^d 
\left[ 
  \sqrt{2 \tauu_1} \tilde{U}_t e_i 
  \oplus 
  \sqrt{2 \tauu_2} \tilde{V}_t e_i^* 
\right] 
f(t, \tilde{X}_t, \tilde{Y}_t) d W_t^i\\
+ 
\sum_{i=1}^d 
\left. 
  \Hess_{g(\tauu_1 t) \oplus g(\tauu_2 t)} f 
\right|_{(t,\tilde{X}_t,\tilde{Y}_t)}
\left( 
  \sqrt{\tauu_1} \tilde{U}_t e_i 
  \oplus 
  \sqrt{\tauu_2} \tilde{V}_t e_i^*, 
  \sqrt{\tauu_1} \tilde{U}_t e_i 
  \oplus 
  \sqrt{\tauu_2} \tilde{V}_t e_i^* 
\right) dt.
\end{multline*}
Here the Hessian of $f$ is taken with respect to the product metric $g(\tauu_1 t) \oplus g(\tauu_2 t)$, 
$e_i^*$ stands for 
$e_i^*( \tilde{U}_t , \tauu_1 t ; \tilde{V}_t , \tauu_2 t )$,
and for tangent vectors $\xi_1 \in T_x M$, $\xi_2 \in T_y M$ we write $\xi_1 \oplus \xi_2 := (\xi_1, \xi_2) \in T_{(x,y)} (M \times M)$.
\end{lemma}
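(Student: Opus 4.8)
The plan is to apply the standard It\^o formula for the pair of frame-bundle diffusions $(\tilde U_t, \tilde V_t)$ and then push it down to $M \times M$ via $\pi \times \pi$. First I would write the joint process $(\tilde U_t, \tilde V_t)$ on $\F(M) \times \F(M)$ as the solution of the coupled Stratonovich system driven by the \emph{same} Brownian motion $(W_t)$: the $\tilde U$-equation \eqref{gtbm} and the $\tilde V$-equation, whose horizontal part uses $H_i^\ast$. The key structural observation is that, because $H_i^\ast(\tilde U_t, \tauu_1 t; \tilde V_t, \tauu_2 t)$ is by construction the $g(\tauu_2 t)$-horizontal lift of $\tilde V_t e_i^\ast$, the diffusion vector fields for the pair are $\sqrt{2\tauu_1}\,H_i(\tauu_1 t, \tilde U_t) \oplus \sqrt{2\tauu_2}\,H_i^\ast$, so applying $d(f \circ (\pi\times\pi))$ and using that horizontal lifts differentiate the base coordinates gives the martingale term $\sum_i [\sqrt{2\tauu_1}\,\tilde U_t e_i \oplus \sqrt{2\tauu_2}\,\tilde V_t e_i^\ast] f\, dW^i_t$ exactly as stated.

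Next I would collect the drift terms, which come from three sources: (i) the explicit $\partial f/\partial t$ term, trivially; (ii) the It\^o correction from converting Stratonovich to It\^o, i.e.\ $\tfrac12 \sum_i$ of the second derivative of $f \circ (\pi\times\pi)$ along the diffusion fields, together with the derivative of those fields along themselves; and (iii) the contribution of the vertical drift terms $-\tauu_1 \sum \frac{\partial g}{\partial\tau}(\tauu_1 t)(\cdots)\V^{\alpha\beta}$ and its $\tauu_2$-analogue. The point is that (ii) and (iii) combine to produce precisely $\sum_i \Hess_{g(\tauu_1 t)\oplus g(\tauu_2 t)} f(\sqrt{\tauu_1}\tilde U_t e_i \oplus \sqrt{\tauu_2}\tilde V_t e_i^\ast, \text{same})\,dt$. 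For a \emph{single} $g(\tau)$-Brownian motion this is exactly the computation already recorded in the excerpt (the displayed It\^o formula for $f(t,\tilde X_t)$, following \cite{act,coulibaly}): the vertical drift is engineered so that the frame stays $g(\tauu_1 t)$-orthonormal and the second-order part becomes the $g(\tauu_1 t)$-Hessian traced against an orthonormal basis, i.e.\ $\tauu_1 \Delta_{g(\tauu_1 t)} f$. Here one simply runs the same bookkeeping on the product, noting that $\Hess$ of a function on a product manifold with product metric splits, and that $(\tilde U_t e_i)_i$ is $g(\tauu_1 t)$-orthonormal while $(\tilde V_t e_i^\ast)_i$ is $g(\tauu_2 t)$-orthonormal because $e_i^\ast = v^{-1} m_{\pi u,\pi v}^{\tau_1\tau_2} u e_i$ and $m_{xy}^{\tau_1\tau_2}$ is an isometry (Remark~\ref{rem:isom}, Definition~\ref{def:pt}), so $\tilde V_t e_i^\ast$ ranges over a $g(\tauu_2 t)$-orthonormal basis as $i$ varies.

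The main obstacle is the cross terms. Unlike the single-motion case, the two components are driven by the \emph{same} $W$, so the quadratic covariation produces genuine mixed second derivatives $\Hess f(\sqrt{\tauu_1}\tilde U_t e_i \oplus 0, 0 \oplus \sqrt{\tauu_2}\tilde V_t e_i^\ast)$ with a sum over the same index $i$; I must verify these assemble correctly into the single bilinear expression $\Hess f(\xi_i, \xi_i)$ with $\xi_i = \sqrt{\tauu_1}\tilde U_t e_i \oplus \sqrt{\tauu_2}\tilde V_t e_i^\ast$, rather than into a trace $\sum_{i,j}$. This is really just bilinearity of $\Hess$ plus the fact that the $i$-th increments of the two components are perfectly correlated, but it is the step where one must be careful that no spurious terms survive; in particular one must check that the interaction between the horizontal lift $H_i^\ast$ (whose defining vector $e_i^\ast$ depends on the current positions of \emph{both} motions) and the Stratonovich-to-It\^o correction does not generate extra drift beyond the Hessian term. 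The cleanest way to dispatch this is to note that, at a fixed time, $H_i^\ast$ is an honest horizontal vector field on $\F(M)$ in the $\tilde V$-variable (with parameters frozen), so the standard horizontal-lift identities—$H_i H_j(f\circ\pi) = \Hess f(\cdot e_i, \cdot e_j)$ along an orthonormal frame, vanishing of first-order vertical contributions—apply verbatim, and the position-dependence of $e_i^\ast$ only enters through smooth coefficients whose Stratonovich corrections are themselves absorbed by the vertical drift exactly as in the one-motion case. Finally, I would remark that smoothness of all objects involved is guaranteed by Assumption~\ref{ass:empty} (via Definition~\ref{def:pt}), so every manipulation above is legitimate.
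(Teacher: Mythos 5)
Your plan is the same as the paper's proof: regard $(\tilde U_t,\tilde V_t)$ as a single diffusion on $\F(M)\times\F(M)$ driven by the one Brownian motion $W$, apply It\^o's formula there, and then specialize to $\f(t,u,v)=f(t,\pi u,\pi v)$; since this $\f$ is constant in the vertical directions the vertical drift terms drop out, the martingale part is $\sum_i[\sqrt{2\tauu_1}\tilde U_te_i\oplus\sqrt{2\tauu_2}\tilde V_te_i^*]f\,dW^i_t$, and the remaining drift is identified with the product Hessian traced along $\xi_i=\sqrt{\tauu_1}\tilde U_te_i\oplus\sqrt{\tauu_2}\tilde V_te_i^*$. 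Up to that point your outline coincides with the paper's argument, including the observation that the perfectly correlated increments give $\sum_i\Hess f(\xi_i,\xi_i)$ rather than a double trace.

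The weak point is how you dispatch the obstacle you yourself single out, namely the dependence of $e_i^*=e_i^*(\tilde U_t,\tauu_1 t;\tilde V_t,\tauu_2 t)$ on the current positions of \emph{both} processes. Your claim that the resulting Stratonovich corrections are ``absorbed by the vertical drift exactly as in the one-motion case'' cannot be right: for $\f=f\circ(\pi\times\pi)$ the vertical fields annihilate $\f$, so the vertical drift absorbs nothing here, and in the one-motion case there is no analogous correction to absorb because the $\R^d$-directions $e_i$ of the fields $H_i$ are constant. If one expands the It\^o correction $\tfrac12\sum_iA_i(A_i\f)$ with $A_i=\sqrt{2\tauu_1}H_i\oplus\sqrt{2\tauu_2}H_i^*$, then besides the Hessian terms one obtains \emph{horizontal first-order} terms, namely $d_yf$ applied to $\tilde V_t$ acting on the derivatives of $e_i^*$ in the direction $H_i$ (entering through the covariation with the noise driving $\tilde U$) and in the direction $H_i^*$; freezing the parameters, as you propose, does not remove the $v$-dependence of $e_i^*$ either, so the identity $(H_i^*)^2(f\circ\pi)=\Hess f(ve_i^*,ve_i^*)$ is not the ``verbatim'' standard one. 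These terms are not part of $\Hess_{g(\tauu_1 t)\oplus g(\tauu_2 t)}f(\xi_i,\xi_i)$ and are not killed by verticality, so your argument as written does not close this step. Note that the paper's proof never decomposes the drift: it keeps the full squared field $[\sqrt{\tauu_1}H_i\oplus\sqrt{\tauu_2}H_i^*]^2\f$ and passes directly to the statement. If you want to argue at the finer level you attempt, you must either show that the sum over $i$ of these coefficient-derivative terms vanishes, or set the coupling up so that the (orthogonal) transport matrix acts on the driving noise increments as an It\^o integrand rather than appearing as a state-dependent Stratonovich coefficient, in which case the stated quadratic covariation structure, and hence the formula of the lemma, comes out with no extra drift.
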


\begin{proof}
It\^o's formula applied to a smooth function $\f$ on $[s, T / \tauu_2] \times \F(M) \times \F(M)$ gives
\begin{align*}
d \f & (t, \tilde{U}_t, \tilde{V}_t) \\
& = 
\frac{\partial \f}{\partial t}(t, \tilde{U}_t, \tilde{V}_t) dt
+ 
\sum_{i = 1}^d 
\left[ 
  \sqrt{2 \tauu_1} H_i(\tauu_1 t, \tilde{U}_t) 
  \oplus 
  \sqrt{2 \tauu_2} H_i^*(\tilde{U}_t, \tauu_1 t ; \tilde{V}_t , \tauu_2 t) 
\right] \f(t,\tilde{U}_t,\tilde{V}_t) d W_t^i\\
& \quad 
+ 
\sum_{i=1}^d 
\left[ 
  \sqrt{\tauu_1} H_i(\tauu_1 t, \tilde{U}_t) 
  \oplus 
  \sqrt{\tauu_2} H_i^*(\tilde{U}_t, \tauu_1 t ; \tilde{V}_t , \tauu_2 t) 
\right]^2 
\f(t,\tilde{U}_t,\tilde{V}_t) dt\\
& \quad 
- 
\sum_{\alpha, \beta = 1}^d 
\left[ 
  \tauu_1 
  \frac{\partial g}{\partial \tau} ( \tauu_1 t )
  (\tilde{U}_t e_\alpha, \tilde{U}_t e_\beta) 
  \V^{\alpha \beta}(\tilde{U}_t)
  \oplus 
  \tauu_2 
  \frac{\partial g}{\partial \tau} ( \tauu_2 t ) 
  (\tilde{V}_t e_\alpha, \tilde{V}_t e_\beta) 
  \V^{\alpha \beta}(\tilde{V}_t) 
\right] 
\f (t, \tilde{U}_t, \tilde{V}_t) dt.
\end{align*}
The claim follows by choosing $\f(t,u,v) := f(t, \pi u, \pi v)$ because the function considered here
is constant in the vertical direction so that the term involving $\V^{\alpha \beta} \f$ vanishes.
\end{proof}

Let $\Lambda (t,x,y) := L(x, \tauu_1 t; y, \tauu_2 t)$. 
In order to apply Lemma~\ref{itoformula} to the function $\Theta$ 
we need the following proposition whose proof is given in the next section:
\begin{proposition}\label{lambda}
Take $x,y \in M$, 
$u \in \OO^{g(\tauu_1 t)}_x (M)$ and $v \in \OO^{g(\tauu_2 t)}_y (M)$. 
Let $\gamma$ be a minimizer of $L ( x, \tauu_1 t ; y, \tauu_2 t )$. 
Assume that 
$( x, \tauu_1 t ; y, \tauu_2 t )$ is not 
in the $\LL$-cut locus. 
Set 
$
\xi_i 
:= 
\sqrt{\tauu_1} u e_i 
\oplus 
\sqrt{\tauu_2} v e_i^*( u, \tauu_1 t ; v , \tauu_2 t )
$. 
Then
\begin{align}
\frac{\partial \Lambda}{\partial t}(t,x,y) 
& = 
\frac{1}{t} \int_{\tauu_1 t}^{\tauu_2 t} \tau^{3/2}
\bigg( \frac{3}{2 \tau} R_{g(\tau)}(\gamma(\tau)) - \Delta_{g(\tau)} R_{g(\tau)}(\gamma(\tau))
- 2 \, | \Ric_{g(\tau)} |_{g(\tau)}^2 (\gamma(\tau)) \nonumber\\
& \hspace{8em} 
- \frac{1}{2 \tau} |\dot \gamma(\tau)|_{g(\tau)}^2 + 2 \Ric_{g(\tau)}(\dot\gamma(\tau), \dot\gamma(\tau)) \bigg) d \tau \label{dlambdadt},
\\ \nonumber
\sum_{i=1}^d \Hess_{g(\tau_1) \oplus g(\tau_2)} & \Lambda \Big|_{(t, x, y)} (\xi_i, \xi_i) \nonumber\\
& \hspace{-5em} \leq 
\left. \frac{d \sqrt{\tau}}{t} \right|_{\tau=\tauu_1 t}^{\tau=\tauu_2 t} + \frac{1}{t} \int_{\tauu_1 t}^{\tauu_2 t} \tau^{3/2}
\Big( 2 \left| \Ric_{g(\tau)} \right|_{g(\tau)}^2 (\gamma(\tau)) 
+ \Delta_{g(\tau)} R_{g(\tau)}(\gamma(\tau)) \nonumber \\
& \hspace{12em} 
- \frac{2}{\tau} R_{g(\tau)}(\gamma(\tau)) 
- 2 \Ric_{g(\tau)}(\dot\gamma(\tau), \dot\gamma(\tau)) \Big) d \tau \label{hesslambda}
\end{align}
and consequently
\begin{align*}
\frac{\partial \Lambda}{\partial t}(t,x,y) 
+ & 
\sum_{i=1}^d \left. \Hess_{g(\tau_1) \oplus g(\tau_2)} \Lambda \right|_{(t, x, y)} (\xi_i, \xi_i)\\
& \leq 
\frac{d}{\sqrt{t}} \left( \sqrt{\tauu_2} - \sqrt{\tauu_1} \right)
- \frac{1}{2 t} \int_{\tauu_1 t}^{\tauu_2 t} \sqrt{\tau} \left( R_{g(\tau)} (\gamma(\tau)) + |\dot \gamma(\tau)|_{g(\tau)}^2 \right) d \tau\\
& = 
\frac{d}{\sqrt{t}} \left( \sqrt{\tauu_2} - \sqrt{\tauu_1} \right) - \frac{1}{2 t} \Lambda(t,x,y).
\end{align*}
\end{proposition}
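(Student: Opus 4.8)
\medskip
\noindent\textbf{Proof strategy.}
I would deduce both displayed estimates from the first and second variation formulas for Perelman's $\LL$-functional, applied to the \emph{single} fixed minimizer $\gamma$ of $L(x,\tauu_1 t;y,\tauu_2 t)$. Because $(x,\tauu_1 t;y,\tauu_2 t)$ is assumed not to lie in the $\LL$-cut locus, $L$ is smooth near this point and $\gamma$ is unique with no conjugate points, so $\Lambda$ is differentiable in $t$ and twice differentiable in $(x,y)$ there, and all derivatives can be expressed through variations along $\gamma$. By the chain rule one has $\partial_t\Lambda=\tauu_1\,\partial_{\tau_1}L+\tauu_2\,\partial_{\tau_2}L$, where the partials are with respect to the two endpoint-times.

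For \eqref{dlambdadt} I would first record the standard first-variation identities: integrating the $\partial_\tau g$-corrections by parts produces the boundary term $2\bigl[\sqrt{\tau}\langle\,\cdot\,,\dot\gamma(\tau)\rangle_{g(\tau)}\bigr]_{\tau_1}^{\tau_2}$, so the $L$-gradient in the final (resp.\ initial) space variable is $2\sqrt{\tau_2}\,\dot\gamma(\tau_2)$ (resp.\ $-2\sqrt{\tau_1}\,\dot\gamma(\tau_1)$), and the interior Euler--Lagrange term gives the $\LL$-geodesic equation $\nabla^{g(\tau)}_{\dot\gamma}\dot\gamma=\tfrac12\nabla R-2\Ric^{\#}(\dot\gamma)-\tfrac1{2\tau}\dot\gamma$. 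Comparing $L$ with the restriction of $\gamma$ to a shorter interval then yields $\partial_{\tau_2}L=\sqrt{\tau_2}\bigl(R_{g(\tau_2)}(\gamma(\tau_2))-|\dot\gamma(\tau_2)|^2\bigr)$ and $\partial_{\tau_1}L=\sqrt{\tau_1}\bigl(|\dot\gamma(\tau_1)|^2-R_{g(\tau_1)}(\gamma(\tau_1))\bigr)$. Feeding these into the chain rule, using $\tauu_i\sqrt{\tauu_i t}=(\tauu_i t)^{3/2}/t$, gives $\partial_t\Lambda=t^{-1}\bigl[\tau^{3/2}\bigl(R_{g(\tau)}(\gamma(\tau))-|\dot\gamma(\tau)|^2\bigr)\bigr]_{\tau=\tauu_1 t}^{\tau=\tauu_2 t}$. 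Writing this boundary difference as $t^{-1}\int_{\tauu_1 t}^{\tauu_2 t}\tfrac{d}{d\tau}\bigl[\tau^{3/2}(R-|\dot\gamma|^2)\bigr]\,d\tau$ and expanding the integrand with the $\LL$-geodesic equation together with the evolution equation $\partial_\tau R=-\Delta R-2|\Ric|^2$ (which holds under \eqref{eq:Rf}) produces exactly the integrand in \eqref{dlambdadt}; this is a short but careful computation.

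For \eqref{hesslambda} I would realize the Hessian as a second variation: taking a geodesic in $(M\times M,\,g(\tauu_1 t)\oplus g(\tauu_2 t))$ through $(x,y)$ with velocity $\xi_i$ and the associated family of minimizers, the endpoint accelerations vanish (the endpoints run along geodesics) and the interior acceleration contributes nothing because $\gamma$ is a critical point of $\LL$ among fixed-endpoint curves; hence $\Hess_{g(\tauu_1 t)\oplus g(\tauu_2 t)}\Lambda|_{(t,x,y)}(\xi_i,\xi_i)$ equals the $\LL$-index form $I(J_i,J_i)$ of the Jacobi field $J_i$ along $\gamma$ with $J_i(\tauu_1 t)=\sqrt{\tauu_1}\,ue_i$ and $J_i(\tauu_2 t)=\sqrt{\tauu_2}\,ve_i^*$. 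Since $\gamma$ has no conjugate points, the Jacobi field minimizes the index form among fields with these boundary values, so $I(J_i,J_i)\le I(W_i,W_i)$ for the test field $W_i(\tau):=\sqrt{\tau/t}\,Z_i(\tau)$, where $Z_i$ is the space-time parallel field along $\gamma$ with $Z_i(\tauu_1 t)=ue_i$; by Definition~\ref{def:pt} we get $Z_i(\tauu_2 t)=m_{xy}^{\tauu_1 t,\tauu_2 t}(ue_i)=ve_i^*$, so the boundary values of $W_i$ are correct, and $\nabla^{g(\tau)}_{\dot\gamma}W_i=\tfrac1{2\tau}W_i-\Ric^{\#}(W_i)$. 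Summing $I(W_i,W_i)$ over $i=1,\dots,d$ and using Remark~\ref{rem:isom} (so that $\{Z_i(\tau)\}_i$ is $g(\tau)$-orthonormal for every $\tau$, which turns the frame sums into traces: $\sum_i\Ric(Z_i,Z_i)=R$, $\sum_i|\Ric^{\#}Z_i|^2=|\Ric|^2$, $\sum_i\langle\Rm(Z_i,\dot\gamma)\dot\gamma,Z_i\rangle=\Ric(\dot\gamma,\dot\gamma)$, $\sum_i\Hess R(Z_i,Z_i)=\Delta R$) together with $\partial_\tau R=-\Delta R-2|\Ric|^2$, the expression collapses to the right-hand side of \eqref{hesslambda}; in particular the boundary term $t^{-1}\bigl[d\sqrt{\tau}\bigr]_{\tauu_1 t}^{\tauu_2 t}$ arises from the part of $|\nabla_{\dot\gamma}W_i|^2$ proportional to $\tau^{-2}|W_i|^2$ after integrating. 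Finally, adding \eqref{dlambdadt} and \eqref{hesslambda} the $|\Ric|^2$, $\Delta R$ and $\Ric(\dot\gamma,\dot\gamma)$ terms cancel in pairs and the $R$-coefficients combine as $\tfrac{3}{2\tau}-\tfrac{2}{\tau}=-\tfrac1{2\tau}$, leaving $t^{-1}\bigl[d\sqrt{\tau}\bigr]_{\tauu_1 t}^{\tauu_2 t}-\tfrac1{2t}\int_{\tauu_1 t}^{\tauu_2 t}\sqrt{\tau}\,(R_{g(\tau)}(\gamma)+|\dot\gamma|^2)\,d\tau=\tfrac{d}{\sqrt t}(\sqrt{\tauu_2}-\sqrt{\tauu_1})-\tfrac1{2t}\Lambda(t,x,y)$, as claimed.

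I expect the second-variation step to be the main obstacle. One needs Perelman's full second variation formula for $\LL$, with all of its Ricci-flow correction terms, and the trace bookkeeping after summing over the frame must be done with care; conceptually, the key point that is easy to miss is that the correct comparison field is the space-time parallel one, which is \emph{not} a Jacobi field, so that the minimizing property of the index form yields an inequality rather than an identity. The first step is comparatively routine once the first variation formula and the $\LL$-geodesic equation are in hand.
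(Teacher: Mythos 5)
Your treatment of $\partial_t\Lambda$ is exactly the paper's: chain rule in the two endpoint times with Topping's formulas, rewriting the boundary difference as $\frac1t\int_{\tauu_1 t}^{\tauu_2 t}\frac{d}{d\tau}\bigl[\tau^{3/2}(R-|\dot\gamma|^2)\bigr]d\tau$, and substituting the $\LL$-geodesic equation and $\partial_\tau R=-\Delta R-2|\Ric|^2$; likewise your test fields $W_i(\tau)=\sqrt{\tau/t}\,Z_i(\tau)$ and the trace bookkeeping are the paper's. Where you genuinely deviate is in how the key inequality $\sum_i\Hess\Lambda(\xi_i,\xi_i)\le\sum_i I(W_i,W_i)$ is justified: you first identify the Hessian with the $\LL$-index form of the $\LL$-Jacobi field with boundary values $\xi_i$ and then invoke an $\LL$-index lemma (Jacobi fields minimize the index form in the absence of conjugate points). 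The paper's Corollary~\ref{hessianofl} bypasses both ingredients by a one-line barrier argument: since $L(\Gamma_s(\tau_1),\tau_1;\Gamma_s(\tau_2),\tau_2)\le\LL(\Gamma_s)$ with equality at $s=0$, any variation whose variation field is $W_i$ and whose endpoints move along $g(\tauu_1 t)$- resp.\ $g(\tauu_2 t)$-geodesics gives $\Hess L(\xi_i,\xi_i)\le\frac{d^2}{ds^2}\big|_{s=0}\LL(\Gamma_s)$ directly, so only the second variation formula and the vanishing of the square term for space-time-parallel-scaled fields are needed. Your route is correct under the no-cut-locus hypothesis, but it buys nothing here and obliges you to prove (or cite from \cite{chowetal}) both the Hessian--index-form identification and the $\LL$-index lemma, which is more machinery than the statement requires; the paper's comparison is the more elementary and is also the argument that survives the discretization in Section~\ref{randomwalks}. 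One bookkeeping point: the evolution equation $\partial_\tau R=-\Delta R-2|\Ric|^2$ alone does not bring $\sum_i I(W_i,W_i)$ into the exact form \eqref{hesslambda}; after summing, the $H$-integral still contains $2\tau^{3/2}\bigl(\partial_\tau R+\nabla_{\dot\gamma}R\bigr)=2\tau^{3/2}\frac{d}{d\tau}\bigl(R\circ\gamma\bigr)$, and you must integrate this by parts along $\gamma$ so that it cancels the boundary term $-2\sqrt{\tau}\sum_i\Ric(W_i,W_i)\big|_{\tau=\tauu_1 t}^{\tau=\tauu_2 t}=-\frac{2\tau^{3/2}}{t}R\big|_{\tau=\tauu_1 t}^{\tau=\tauu_2 t}$ and converts the interior coefficient $\frac1\tau R$ into $-\frac2\tau R$; once \eqref{hesslambda} is in that form, your final cancellation against \eqref{dlambdadt} is exactly the paper's.
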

The proof of Theorem~\ref{mainresult} is now achieved 
under Assumption~\ref{ass:empty} 
by combining Lemma~\ref{itoformula} and Proposition~\ref{lambda}:

\begin{proof}[Proof of Theorem~\ref{mainresult} under Assumption~\ref{ass:empty}]
Lemma~\ref{lem:i'bility} below ensures that 
$\Theta ( t, \tilde{X}_t , \tilde{Y}_t )$ is integrable. 
Thus it suffices to show that 
the bounded variation part of 
$\Theta (t , \tilde{X}_t , \tilde{Y}_t )$ 
is nonpositive. 
By Lemma~\ref{itoformula},
\begin{align*}
d \Theta (t, \tilde{X}_t, \tilde{Y}_t) 
& = 
\Bigg[ 
\frac{\partial \Theta}{\partial t}(t, \tilde{X}_t, \tilde{Y}_t) \\
& \hspace{1.5em} 
+ 
\sum_{i=1}^d 
\left. 
  \Hess_{g(\tauu_1 t) \oplus g(\tauu_2 t)} \Theta 
\right|_{(t,\tilde{X}_t,\tilde{Y}_t)}
\left( 
  \sqrt{\tauu_1} \tilde{U}_t e_i 
  \oplus 
  \sqrt{\tauu_2} \tilde{V}_t e_i^*
  , 
  \sqrt{\tauu_1} \tilde{U}_t e_i 
  \oplus 
  \sqrt{\tauu_2} \tilde{V}_t e_i^* 
\right) 
\Bigg] dt\\
& \hspace{1.5em} 
+ 
\sum_{i = 1}^d 
\left[ 
  \sqrt{2 \tauu_1} \tilde{U}_t e_i 
  \oplus 
  \sqrt{2 \tauu_2} \tilde{V}_t e_i^* 
\right] 
\Theta(t,\tilde{X}_t,\tilde{Y}_t) d W_t^i.
\end{align*}
For the bounded variation part we obtain
\[
\frac{\partial \Theta}{\partial t}(t, \tilde{X}_t, \tilde{Y}_t) 
= 
\frac{\sqrt{\tauu_2} - \sqrt{\tauu_1}}{\sqrt{t}} 
\Lambda(t, \tilde{X}_t, \tilde{Y}_t)
+ 
2 \left( \sqrt{\tauu_2 t} - \sqrt{\tauu_1 t} \right) 
\frac{\partial \Lambda}{\partial t} (t, \tilde{X}_t, \tilde{Y}_t) 
- 2d \left( \sqrt{\tauu_2} - \sqrt{\tauu_1} \right)^2
\]
and
\begin{multline*}
\sum_{i=1}^d 
\left. 
  \Hess_{g(\tauu_1 t) \oplus g(\tauu_2 t)} \Theta 
\right|_{(t,\tilde{X}_t,\tilde{Y}_t)}
\left( 
  \sqrt{\tauu_1} \tilde{U}_t e_i 
  \oplus 
  \sqrt{\tauu_2} \tilde{V}_t e_i^* 
  , 
  \sqrt{\tauu_1} \tilde{U}_t e_i 
  \oplus 
  \sqrt{\tauu_2} \tilde{V}_t e_i^* 
\right)\\
= 
2 \left( \sqrt{\tauu_2 t} - \sqrt{\tauu_1 t} \right) 
\sum_{i=1}^d 
\left. 
  \Hess_{g(\tauu_1 t) \oplus g(\tauu_2 t)} \Lambda 
\right|_{(t,\tilde{X}_t,\tilde{Y}_t)}
\left( 
  \sqrt{\tauu_1} \tilde{U}_t e_i 
  \oplus 
  \sqrt{\tauu_2} \tilde{V}_t e_i^* 
  , 
  \sqrt{\tauu_1} \tilde{U}_t e_i 
  \oplus 
  \sqrt{\tauu_2} \tilde{V}_t e_i^* 
\right).
\end{multline*}
Thus, by Proposition~\ref{lambda},
\begin{align*}
\frac{\partial \Theta}{\partial t} & (t, \tilde{X}_t, \tilde{Y}_t)
+ 
\sum_{i=1}^d 
\left. 
  \Hess_{g(\tauu_1 t) \oplus g(\tauu_2 t)} \Theta 
\right|_{(t,\tilde{X}_t,\tilde{Y}_t)}
\left( 
  \sqrt{\tauu_1} \tilde{U}_t e_i 
  \oplus 
  \sqrt{\tauu_2} \tilde{V}_t e_i^*
  , 
  \sqrt{\tauu_1} \tilde{U}_t e_i 
  \oplus 
  \sqrt{\tauu_2} \tilde{V}_t e_i^* 
\right)\\
& \leq 
2 \left( \sqrt{\tauu_2 t} - \sqrt{\tauu_1 t} \right) 
\left[ \frac{d}{\sqrt{t}} \left( \sqrt{\tauu_2} - \sqrt{\tauu_1} \right) 
- \frac{1}{2 t} \Lambda(t,\tilde{X}_t ,\tilde{Y}_t) \right] \\
& \hspace{12em} + 
\frac{\sqrt{\tauu_2} - \sqrt{\tauu_1}}{\sqrt{t}} 
\Lambda(t, \tilde{X}_t, \tilde{Y}_t) 
- 2d \left( \sqrt{\tauu_2} - \sqrt{\tauu_1} \right)^2\\
& = 0.
\end{align*}
Hence $\Theta(t,\tilde{X}_t,\tilde{Y}_t)$ is indeed a supermartingale.
\end{proof}

\begin{remark} \label{rem:mart}
Unlike the case in \cite{act}, 
the pathwise contraction of 
$\Theta ( t , \tilde{X}_t , \tilde{Y}_t )$ 
is no longer true in our case. 
In other words, 
the martingale part of 
$\Theta ( t , \tilde{X}_t , \tilde{Y}_t )$ 
does not vanish. 
We will see it in the following. 
The minimal $\LL$-geodesic 
$\gm = \gm_{xy}^{\tau_1 \tau_2}$ of $L ( x, \tau_1 ; y , \tau_2 )$ 
satisfies the $\LL$-geodesic equation 
\begin{equation} \label{eq:dt1}
\nabla_{\dot{\gm} (\tau)}^{g(\tau)} \dot{\gm} (\tau) 
= 
\frac12 \nabla^{g(\tau)} R_{g(\tau)} 
- 
2 \Ric_{g(\tau)}^\# ( \dot{\gm} (\tau) ) 
- 
\frac{1}{2\tau} \dot{\gm} (\tau) 
\end{equation}
(see \cite[Corollary~7.19]{chowetal}). 
Thus the first variation formula 
(see \cite[Lemma~7.15]{chowetal}) yields 
\begin{equation} \label{eq:first}
\sqrt{2 \tauu_1} \tilde{U}_t e_i 
\oplus 
\sqrt{2 \tauu_2} \tilde{V}_t e_i^* 
\Lambda (t,\tilde{X}_t,\tilde{Y}_t) 
= 
\sqrt{2t} \tauu_2 
\langle 
  \tilde{V}_t e_i^* , \dot{\gm} (\tauu_2 t)
\rangle_{g (\tauu_2 t)}
- 
\sqrt{2t} \tauu_1 
\langle 
  \tilde{U}_t e_i , \dot{\gm} (\tauu_1 t)
\rangle_{g(\tauu_1 t)} .
\end{equation}
One obstruction to pathwise contraction is 
on the difference of time-scalings $\tauu_1$ and $\tauu_2$. 
In addition, by \eqref{eq:dt1}, 
$\sqrt{\tau} \dot{\gm} (\tau)$ is \emph{not} 
space-time parallel to $\gm$ 
in general (cf. Remark~\ref{rem:isom}). 
\end{remark}

\section{Proof of Proposition~\ref{lambda}}
In this section,
we write $\tau_1 := \tauu_1 t$ and $\tau_2 := \tauu_2 t$.
We assume $\tau_2 < T$. 
For simplicity of notations, 
we abbreviate the dependency on the metric $g(\tau)$ of 
several geometric quantities such as 
$\Ric$, $R$, 
the inner product $\langle \cdot, \cdot \rangle$, 
the covariant derivative $\nabla$ 
etc.~when 
our choice of $\tau$ is obvious. 
For this abbreviation, 
we will think that 
$\gm (\tau)$ is in $( M , g (\tau) )$ 
and 
$\dot{\gm} ( \tau )$ is 
in $( T_{\gm (\tau)} M , g (\tau) )$. 
Note that, when $\tauu_1 = 0$, 
$\lim_{\tau \downarrow \tauu_1} \sqrt{\tau} \dot{\gm} (\tau)$ exists
while
$\lim_{\tau \downarrow 0} | \dot{\gm} (\tau) | = \infty$.
In any case, $\sqrt{\tau} | \dot{\gm} (\tau) |$ is bounded
(see \eqref{eq:velo-bound1}).

We first compute the time derivative of $\Lambda$. 
When $\tauu_1 > 0$,
by \cite[Formulas~(A.4) and (A.5)]{topping} we have
\begin{align*}
\frac{\partial L}{\partial \tau_1}(x, \tau_1; y, \tau_2) 
& = 
-\sqrt{\tau_1} \left( R_{g(\tau_1)}(x) - |\dot\gamma(\tau_1)|^2 \right),\\
\frac{\partial L}{\partial \tau_2}(x, \tau_1; y, \tau_2) 
& = 
\sqrt{\tau_2} \left( R_{g(\tau_2)}(y) - |\dot\gamma(\tau_2)|^2 \right),
\end{align*}
so that
\begin{align} 
\frac{\partial \Lambda}{\partial t}(t,x,y) 
& = 
\tauu_1 \frac{\partial L}{\partial \tau_1}(x, \tau_1; y, \tau_2) 
+ 
\tauu_2 \frac{\partial L}{\partial \tau_2}(x, \tau_1; y, \tau_2) 
\nonumber \\
& = 
\frac{1}{t} 
\left( 
  \tau_2^{3/2} 
  \left( 
    R (\gamma(\tau_2)) 
    - 
    |\dot\gamma(\tau_2)|^2 
  \right)
  - \tau_1^{3/2} 
  \left( 
    R (\gamma(\tau_1)) 
    - 
    |\dot\gamma(\tau_1)|^2 
  \right) 
\right) 
. 
\label{eq:dt}
\end{align}
Thus the integration-by-parts yields,
\begin{align}
\frac{\partial \Lambda}{\partial t}(t,x,y) 
& = 
\frac{3}{2 t} 
\int_{\tau_1}^{\tau_2} \sqrt{\tau} 
\left( 
  R (\gamma(\tau)) - |\dot\gamma(\tau)|^2 
\right) d \tau \nonumber \\
& \quad 
+ \frac{1}{t} 
\int_{\tau_1}^{\tau_2} 
\tau^{3/2} \bigg( 
  \frac{\partial R}{\partial \tau}(\gamma(\tau)) 
  + 
  \nabla_{\dot\gamma(\tau)} R (\gamma(\tau)) \nonumber \\
& \hspace{8em} 
  - 2 
  \langle 
    \nabla_{\dot{\gamma}(\tau)} \dot\gamma(\tau) , \dot\gamma(\tau) 
  \rangle
  - 2 \Ric (\dot\gamma(\tau), \dot\gamma(\tau)) 
\bigg) d \tau. \label{eq:dt0}
\end{align}
Note that we have 
\begin{equation} \label{eq:dt2} 
\frac{\partial R}{\partial \tau} 
= 
- \Delta R - 2 \, |\!\Ric |^2
\end{equation}
(see e.g.~\cite[Proposition~2.5.4]{toppinglectures}).
Since $\gm$ satisfies the $\LL$-geodesic equation \eqref{eq:dt1}, 
by substituting \eqref{eq:dt1} and \eqref{eq:dt2} 
into \eqref{eq:dt0}, we obtain \eqref{dlambdadt}. 
Note that
the derivation of \eqref{eq:dt} and \eqref{eq:dt0} is 
still valid even when $\tauu_1 = 0$ 
because of the remark at the beginning of this section. 
Thus \eqref{dlambdadt} holds when $\tauu_1 = 0$, too.

In order to estimate $\sum_{i=1}^d \left. \Hess_{g(\tau_1) \oplus g(\tau_2)} \Lambda \right|_{(t, x, y)} (\xi_i, \xi_i)$
we begin with the second variation formula for the $\LL$-functional:

\begin{lemma}[Second variation formula; {\cite[Lemma~7.37]{chowetal}}]\label{secondvariation}
Let $\Gamma: (-\eps, \eps) \times [\tau_1, \tau_2] \to M$ be a variation of $\gamma$, $S(s, \tau) := \partial_s \Gamma(s, \tau)$,
and $Z(\tau) := \partial_s \Gamma(0,\tau)$ the variation field of $\Gamma$. Then
\begin{align}
\left. \frac{d^2}{ds^2} \right|_{s=0} \LL(\Gamma_s) 
& = 
2 \sqrt{\tau} 
\left. 
  \langle 
    \dot\gamma(\tau) 
    , 
    \nabla_{Z(\tau)} S(0, \tau) 
  \rangle 
\right|_{\tau=\tau_1}^{\tau=\tau_2}
- 2 
\left. 
  \sqrt{\tau} 
  \Ric (Z(\tau), Z(\tau)) 
\right|_{\tau=\tau_1}^{\tau=\tau_2} \nonumber \\
& \qquad 
+ 
\left. 
  \frac{1}{\sqrt{\tau}} 
  \left| Z(\tau) \right|^2 
\right|_{\tau=\tau_1}^{\tau=\tau_2}
- 
\int_{\tau_1}^{\tau_2} 
  \sqrt{\tau} 
  H(\dot\gamma(\tau), Z(\tau)) 
d \tau \nonumber \\
& \qquad 
+ 
\int_{\tau_1}^{\tau_2} 
  2 \sqrt{\tau} 
  \left| 
    \nabla_{\dot{\gm} (\tau)} Z(\tau) 
    + 
    \Ric^\#(Z(\tau)) 
    - 
    \frac{1}{2 \tau} Z(\tau) 
  \right|^2 
d \tau, \label{zweite}
\end{align}
where
\begin{align} \nonumber
H(\dot\gamma(\tau), Z(\tau)) 
& := 
-2 \frac{\partial \Ric}{\partial \tau}(Z(\tau), Z(\tau)) 
- \Hess R (Z(\tau), Z(\tau))
+ 2 \, | \! \Ric^\# (Z(\tau))|^2
\\ \nonumber
& \qquad 
- \frac{1}{\tau} \Ric (Z(\tau), Z(\tau)) 
- 2 \Rm (Z(\tau), \dot\gamma(\tau), \dot\gamma(\tau), Z(\tau))
\\ \label{eq:H}
& \qquad 
- 4 (\nabla_{\dot\gamma(\tau)} \Ric)(Z(\tau), Z(\tau)) 
+ 4 (\nabla_{Z(\tau)} \Ric)(\dot\gamma(\tau), Z(\tau)).
\end{align}
\end{lemma}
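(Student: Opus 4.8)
The plan is to establish \eqref{zweite} by computing the second variation of $\LL$ directly along $\Gamma$, keeping careful account of the $\tau$-dependence of the metric. Write $T(s,\tau):=\partial_\tau\Gamma(s,\tau)$, so that $T(0,\cdot)=\dot\gamma$; torsion-freeness of each $\nabla^{g(\tau)}$ together with $[\partial_s,\partial_\tau]=0$ gives $\nabla_S T=\nabla_T S$, which at $s=0$ equals $\nabla_{\dot\gamma}Z$. The subtlety present at every differentiation in $\tau$ is that $g(\tau)$ --- and with it $\nabla$, $R$, $\Ric$, $\Rm$ --- depends on $\tau$ but not on $s$: by \eqref{eq:Rf}, differentiating a $g(\tau)$-inner product of two fields along $\gamma$ in $\tau$ produces the extra term $\tfrac{\partial g}{\partial\tau}(\cdot,\cdot)=2\Ric(\cdot,\cdot)$, whereas differentiating it in $s$ does not. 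Since the lemma is only ever applied to $\gamma$ a minimizer of $L$, I would take $\gamma$ to be an $\LL$-geodesic; this is in fact what makes \eqref{zweite} hold as stated, a general base curve contributing further terms involving $\nabla_{\dot\gamma}\dot\gamma$.

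First I would record the first variation. Writing $\LL(\Gamma_s)=\int_{\tau_1}^{\tau_2}\sqrt\tau\,(|T|_{g(\tau)}^2+R_{g(\tau)}(\Gamma))\,d\tau$, using $\partial_s|T|^2=2\langle\nabla_TS,T\rangle$ and $\partial_s R(\Gamma)=\langle\nabla R,S\rangle$, and integrating by parts once in $\tau$ --- which is where the coefficient $\tfrac1{\sqrt\tau}$ and a $2\Ric$ correction enter --- one obtains the first variation formula \cite[Lemma~7.15]{chowetal}: a boundary term $2\sqrt\tau\langle T,S\rangle$ evaluated at $\tau_1,\tau_2$ minus $\int_{\tau_1}^{\tau_2}\sqrt\tau\,\langle\,2\nabla_TT-\nabla R+4\Ric^\#(T)+\tfrac1\tau T\,,\,S\,\rangle\,d\tau$. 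The bracketed field is exactly the Euler--Lagrange operator of $\LL$, whose vanishing is the $\LL$-geodesic equation \eqref{eq:dt1}; hence for our $\gamma$ it vanishes at $s=0$.

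Next I would differentiate this first variation once more in $s$ and evaluate at $s=0$. The term pairing the Euler--Lagrange operator with $\nabla_S S$ drops because that operator vanishes at $s=0$; the boundary term contributes $2\sqrt\tau\,[\langle\nabla_{\dot\gamma}Z,Z\rangle+\langle\dot\gamma,\nabla_Z S(0,\tau)\rangle]$ at $\tau_1,\tau_2$; and differentiating the Euler--Lagrange operator in the $Z$-direction produces the curvature term (via $\nabla_Z\nabla_{\dot\gamma}\dot\gamma=\nabla_{\dot\gamma}\nabla_{\dot\gamma}Z+R(Z,\dot\gamma)\dot\gamma$), the Hessian term $\Hess R(Z,Z)$ (via $\nabla_Z\nabla R$), the term $(\nabla_Z\Ric)(\dot\gamma,Z)$ (via $\nabla_Z(\Ric^\#\dot\gamma)$), and first-order terms in $\nabla_{\dot\gamma}Z$. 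I would then integrate the $\langle\nabla_{\dot\gamma}\nabla_{\dot\gamma}Z,Z\rangle$ piece by parts, complete the square to extract $\int_{\tau_1}^{\tau_2}2\sqrt\tau\,|\nabla_{\dot\gamma}Z+\Ric^\#(Z)-\tfrac1{2\tau}Z|^2\,d\tau$, and rewrite the remaining first-order expressions $\tfrac1{\sqrt\tau}\langle\nabla_{\dot\gamma}Z,Z\rangle$ and $\sqrt\tau\,\Ric(\nabla_{\dot\gamma}Z,Z)$ as total $\tau$-derivatives. The latter step is what produces the boundary terms $\tfrac1{\sqrt\tau}|Z|^2$ and $-2\sqrt\tau\,\Ric(Z,Z)$ at $\tau_1,\tau_2$ in \eqref{zweite}, and it is also what forces $\partial_\tau\Ric$ into the remaining integrand. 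Finally I would substitute the evolution equations induced by \eqref{eq:Rf}, namely $\partial_\tau R=-\Delta R-2|\Ric|^2$ (equation \eqref{eq:dt2}) and the Lichnerowicz-type evolution equation for $\Ric$, and repeatedly use the contracted second Bianchi identity $\dive\Ric=\tfrac12\nabla R$ together with the curvature-commutation formulas; after collecting, the residual integrand is precisely $-\sqrt\tau\,H(\dot\gamma,Z)$ with $H$ as in \eqref{eq:H}, which gives \eqref{zweite}.

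I expect the main obstacle to be the bookkeeping in this last step: the evolution of $\Ric$, the Bianchi identities, and the second covariant derivatives of $R$ and $\Ric$ together produce on the order of a dozen curvature terms that must be reorganised into the single compact expression \eqref{eq:H}, and throughout one must place each $2\Ric$ correction coming from $\partial_\tau g$ in the right spot and keep the sign conventions for $\Rm$ and for $R(\cdot,\cdot)\cdot$ consistent. This is a routine but long calculation, carried out in detail in \cite[Chapter~7]{chowetal} (see also \cite{perelman1,ye}); I would present the outline above and refer there for the remaining algebra.
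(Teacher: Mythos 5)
Your route is essentially the paper's own: the paper gives no independent argument for Lemma~\ref{secondvariation}, but cites \cite[Lemma~7.37]{chowetal} and remarks that the proof there (written for $\tau_1=0$ and $Z(\tau_1)=0$) adapts to the case needed here; your outline is exactly that computation with the boundary contributions at $\tau_1$ retained (which is the whole content of the adaptation), and, like the paper, you defer the final algebra to \cite{chowetal}. Your observation that $\gamma$ must be an $\LL$-geodesic for \eqref{zweite} to hold as stated is also consistent with the paper's usage, since $\gamma$ is the minimizer fixed in Proposition~\ref{lambda}.

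Two points in your sketch need repair, one of them substantive. First, the commutation identity you invoke, $\nabla_Z\nabla_{\dot\gamma}\dot\gamma=\nabla_{\dot\gamma}\nabla_{\dot\gamma}Z+R(Z,\dot\gamma)\dot\gamma$, is the fixed-metric formula. Here the Levi--Civita connection depends on $\tau$, so commuting the $s$-direction covariant derivative with the $\tau$-direction one produces, besides the curvature term, an extra term built from the $\tau$-derivative of the Christoffel symbols, which by \eqref{eq:Rf} is an expression in $\nabla\Ric$; paired with $Z$ it yields contributions of the type $(\nabla_{\dot\gamma}\Ric)(Z,Z)$ and $(\nabla_{Z}\Ric)(\dot\gamma,Z)$. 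These are exactly the kind of terms carrying the coefficients $-4$ and $+4$ in \eqref{eq:H}, so if you run your outline with the fixed-metric identity the final collection cannot reproduce $H$; you must track the time-dependence of the connection (not only the $2\Ric$ corrections from differentiating $g(\tau)$-inner products) as \cite{chowetal} does. Second, the closing step is lighter than you describe: $H$ retains $\partial_\tau\Ric$ explicitly, so neither the Lichnerowicz-type evolution of $\Ric$ nor \eqref{eq:dt2}, nor the contracted Bianchi identity, is needed in this lemma; in the paper those identities enter only later, in deriving \eqref{dlambdadt} and in the trace computation leading to \eqref{eq:I2}.
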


In \cite{chowetal} this lemma is only proved in the case $\tau_1 = 0$ and $Z(\tau_1) = 0$.
However, the proof given there can be easily adapted to the slightly more general case needed here.

\begin{corollary}[see {\cite[Lemma~7.39]{chowetal}} for a similar statement] \label{secondvariationzwei}
If the variation field $Z$ is of the form 
\begin{equation}\label{v}
Z(\tau) = \sqrt{\frac{\tau}{t}} Z^* (\tau)
\end{equation}
with a space-time parallel field $Z^*$ 
satisfying $|Z^*(\tau)| \equiv 1$, 
then
\begin{align*}
\left. \frac{d^2}{ds^2} \right|_{s=0} \LL(\Gamma_s)
& = 
2 \sqrt{\tau} 
\left. 
  \langle 
    \dot\gamma(\tau) 
    ,  
    \nabla_{Z(\tau)} S(0, \tau) 
  \rangle_{g(\tau)} 
\right|_{\tau=\tau_1}^{\tau=\tau_2}
- 2 
\left. 
  \sqrt{\tau} \Ric (Z(\tau), Z(\tau)) 
\right|_{\tau=\tau_1}^{\tau=\tau_2}
\\
& \quad 
- 
\int_{\tau_1}^{\tau_2} 
  \sqrt{\tau} H(\dot\gamma(\tau), Z(\tau)) 
d \tau 
+ 
\left. 
  \frac{\sqrt{\tau}}{t} 
\right|_{\tau=\tau_1}^{\tau=\tau_2} .
\end{align*}
\end{corollary}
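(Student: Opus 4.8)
The plan is to substitute the special variation field $Z(\tau)=\sqrt{\tau/t}\,Z^{*}(\tau)$ from \eqref{v} into the second variation formula \eqref{zweite} of Lemma~\ref{secondvariation} and to verify that precisely two of its terms — the boundary contribution $\left.\frac{1}{\sqrt{\tau}}|Z(\tau)|^{2}\right|_{\tau_{1}}^{\tau_{2}}$ and the integral $\int_{\tau_{1}}^{\tau_{2}}2\sqrt{\tau}\,\bigl|\nabla_{\dot\gamma(\tau)}Z(\tau)+\Ric^{\#}(Z(\tau))-\tfrac{1}{2\tau}Z(\tau)\bigr|^{2}\,d\tau$ — combine into the single term $\left.\sqrt{\tau}/t\right|_{\tau_{1}}^{\tau_{2}}$, while the first two boundary terms and the $H$-integral are left untouched. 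The computation is entirely local along $\gamma$, so no global assumption (in particular not Assumption~\ref{ass:empty}) is involved; this is the endpoint-general version of the computation behind \cite[Lemma~7.39]{chowetal}.

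The first step is to differentiate $Z$ covariantly along $\gamma$. Writing $\phi(\tau):=\sqrt{\tau/t}$ and using the Leibniz rule for the covariant derivative along the curve — legitimate because at each fixed $\tau$ we use the $g(\tau)$-Levi-Civita connection — one gets $\nabla_{\dot\gamma(\tau)}Z(\tau)=\phi'(\tau)Z^{*}(\tau)+\phi(\tau)\nabla_{\dot\gamma(\tau)}Z^{*}(\tau)$. Since $Z^{*}$ is space-time parallel, \eqref{lparallel} gives $\nabla_{\dot\gamma(\tau)}Z^{*}(\tau)=-\Ric^{\#}(Z^{*}(\tau))$; and since $\phi'(\tau)=\tfrac{1}{2\sqrt{\tau t}}=\tfrac{1}{2\tau}\phi(\tau)$, this yields $\nabla_{\dot\gamma(\tau)}Z(\tau)=\tfrac{1}{2\tau}Z(\tau)-\Ric^{\#}(Z(\tau))$. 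Hence the integrand $\nabla_{\dot\gamma(\tau)}Z(\tau)+\Ric^{\#}(Z(\tau))-\tfrac{1}{2\tau}Z(\tau)$ of the last integral in \eqref{zweite} vanishes identically, and that integral equals $0$.

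The second step handles the remaining boundary term. By Remark~\ref{rem:isom} the $g(\tau)$-norm of a space-time parallel field is constant in $\tau$, so the hypothesis $|Z^{*}(\tau)|\equiv1$ is just the normalization of that constant; therefore $|Z(\tau)|_{g(\tau)}^{2}=\tfrac{\tau}{t}\,|Z^{*}(\tau)|_{g(\tau)}^{2}=\tfrac{\tau}{t}$, whence $\tfrac{1}{\sqrt{\tau}}|Z(\tau)|^{2}=\tfrac{\sqrt{\tau}}{t}$. Substituting the two simplifications into \eqref{zweite}, and leaving the first two boundary terms $2\sqrt{\tau}\langle\dot\gamma,\nabla_{Z}S(0,\tau)\rangle\big|_{\tau_{1}}^{\tau_{2}}$ and $-2\sqrt{\tau}\Ric(Z,Z)\big|_{\tau_{1}}^{\tau_{2}}$ and the $H$-integral exactly as they stand, produces the asserted identity verbatim.

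I do not expect any genuine obstacle here: the argument is a substitution together with the one-line cancellation above. The only points deserving a word of care are that the Leibniz rule is applied to the $\tau$-dependent connection (valid since we differentiate along the curve at fixed parameter) and that the normalization $|Z^{*}|\equiv1$ is consistent precisely because space-time parallel transport is a $g(\tau)$-isometry, which is the content of Remark~\ref{rem:isom}.
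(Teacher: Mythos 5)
Your proposal is correct and follows the same route as the paper: differentiate $Z(\tau)=\sqrt{\tau/t}\,Z^*(\tau)$ to get $\nabla_{\dot\gamma(\tau)}Z(\tau)=-\Ric^{\#}(Z(\tau))+\tfrac{1}{2\tau}Z(\tau)$, so the last integral in \eqref{zweite} vanishes, and use $|Z(\tau)|^2=\tau/t$ (from the isometry property of space-time parallel transport, Remark~\ref{rem:isom}) to turn the boundary term $\tfrac{1}{\sqrt{\tau}}|Z(\tau)|^2$ into $\sqrt{\tau}/t$. The paper's proof records only the first of these two observations explicitly, so your write-up just fills in the same details.
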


\begin{proof}
Since $Z^*$ is space-time parallel, $Z$ satisfies
\begin{equation}\label{dgl}
\nabla_{\dot{\gm}(\tau)} Z(\tau) 
= 
-\Ric^\# ( Z(\tau) ) 
+ \frac{1}{2 \tau} Z(\tau), 
\end{equation}
so that the last term in \eqref{zweite} vanishes.
\end{proof}

\begin{corollary}[Hessian of $L$; see {\cite[Corollary~7.40]{chowetal}} for a similar statement]\label{hessianofl}
Let $Z$ be a vector field along $\gamma$ of the form \eqref{v} 
and 
$\xi := Z(\tau_1) \oplus Z(\tau_2) \in T_{(x,y)} (M \! \times \! M)$
Then
\begin{align} 
\left. \Hess_{g(\tau_1) \oplus g(\tau_2)} L \right|_{(x, \tau_1; y, \tau_2)} (\xi, \xi)
& \leq 
- 
\int_{\tau_1}^{\tau_2} 
  \sqrt{\tau} H(\dot\gamma(\tau), Z(\tau)) 
d \tau 
+ 
\left. 
  \frac{\sqrt{\tau}}{t} 
\right|_{\tau=\tau_1}^{\tau=\tau_2} 
\nonumber \\
& \quad 
- 2 
\left. 
  \sqrt{\tau} \Ric_{g(\tau)} (Z(\tau), Z(\tau)) 
\right|_{\tau=\tau_1}^{\tau=\tau_2}. 
\label{hessl}
\end{align}
\end{corollary}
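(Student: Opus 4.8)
The plan is to bound the Hessian of $L$ from above by a suitable second variation of the $\LL$-functional, exploiting that, under Assumption~\ref{ass:empty}, $L(x,\tau_1;y,\tau_2)=\LL(\gamma)$ for the unique minimizer $\gamma=\gamma_{xy}^{\tau_1\tau_2}$ and that $L$ is smooth, whereas $L(x',\tau_1;y',\tau_2)\le\LL(\sigma)$ for \emph{every} curve $\sigma$ from $x'$ to $y'$.

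First I would fix $\gamma:=\gamma_{xy}^{\tau_1\tau_2}$ and the field $Z$ of the form~\eqref{v}, and take the variation $\Gamma(s,\tau):=\exp^{g(\tau)}_{\gamma(\tau)}(sZ(\tau))$, which for $|s|$ small is smooth in $(s,\tau)$ (each $(M,g(\tau))$ being complete), satisfies $\Gamma(0,\cdot)=\gamma$ and $\partial_s\Gamma(0,\cdot)=Z$, and whose boundary curves $s\mapsto\Gamma(s,\tau_1)$ and $s\mapsto\Gamma(s,\tau_2)$ are geodesics for $g(\tau_1)$ resp.\ $g(\tau_2)$ with initial velocities $Z(\tau_1)\in T_xM$ resp.\ $Z(\tau_2)\in T_yM$. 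In particular $\nabla^{g(\tau_i)}_{Z(\tau_i)}S(0,\tau_i)=0$ at $\tau_i\in\{\tau_1,\tau_2\}$, where $S(s,\tau):=\partial_s\Gamma(s,\tau)$, so the first boundary term in Corollary~\ref{secondvariationzwei} vanishes and that corollary gives
\[
\left.\frac{d^2}{ds^2}\right|_{s=0}\LL(\Gamma_s)
=
-\int_{\tau_1}^{\tau_2}\sqrt{\tau}\,H(\dot\gamma(\tau),Z(\tau))\,d\tau
+\left.\frac{\sqrt{\tau}}{t}\right|_{\tau=\tau_1}^{\tau=\tau_2}
-2\left.\sqrt{\tau}\,\Ric_{g(\tau)}(Z(\tau),Z(\tau))\right|_{\tau=\tau_1}^{\tau=\tau_2}.
\]

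Next I would compare this with $L$ along the product curve $c(s):=(\Gamma(s,\tau_1),\Gamma(s,\tau_2))$ in $M\times M$. Since $\Gamma_s$ joins $\Gamma(s,\tau_1)$ to $\Gamma(s,\tau_2)$, we have $\phi(s):=L(\Gamma(s,\tau_1),\tau_1;\Gamma(s,\tau_2),\tau_2)\le\LL(\Gamma_s)=:\psi(s)$ for all $s$, with $\phi(0)=\psi(0)$ because $\Gamma_0=\gamma$ is the minimizer; since $\phi$ and $\psi$ are smooth (here Assumption~\ref{ass:empty} enters), $\psi-\phi\ge0$ attains a minimum at $s=0$, hence $\phi''(0)\le\psi''(0)$. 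Finally $c$ is a geodesic of $g(\tau_1)\oplus g(\tau_2)$ with $\dot c(0)=Z(\tau_1)\oplus Z(\tau_2)=\xi$, so $\phi''(0)=\Hess_{g(\tau_1)\oplus g(\tau_2)}L|_{(x,\tau_1;y,\tau_2)}(\xi,\xi)$; combining this with the previous display yields~\eqref{hessl}.

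The only point requiring care is the bookkeeping of metrics at the two endpoints: the boundary curve at time $\tau_i$ must be geodesic for $g(\tau_i)$ — precisely the metric appearing in the corresponding boundary terms of~\eqref{zweite} — and one should check that a single variation $\Gamma$ can simultaneously make $c$ a product geodesic (so that $\phi''(0)$ is literally the Hessian and not the Hessian plus an acceleration term) and annihilate $\nabla_{Z(\tau_i)}S(0,\tau_i)$; both demands coincide and are met by the $\exp^{g(\tau)}_{\gamma(\tau)}$ construction above. When $\tauu_1=0$ one reads the $\tau_1$-boundary terms as limits $\tau\downarrow0$, using $|Z(\tau)|^2=\tau/t$ (so $Z(0)=0$ and $x$ is not varied) together with the boundedness of $\sqrt{\tau}\,|\dot\gamma(\tau)|$, as in the remarks at the start of this section. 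Everything else is the already-established second variation formula of~\cite{chowetal} recorded in Lemma~\ref{secondvariation} and Corollary~\ref{secondvariationzwei}.
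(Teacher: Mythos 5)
Your proposal is correct and follows essentially the same route as the paper: the paper likewise takes a variation of $\gamma$ with variation field $Z$ whose endpoint accelerations $\nabla_{Z(\tau_i)}S(0,\tau_i)$ vanish, bounds $\Hess L(\xi,\xi)$ by $\frac{d^2}{ds^2}\big|_{s=0}\LL(\Gamma_s)$, and invokes Corollary~\ref{secondvariationzwei}. Your explicit construction $\Gamma(s,\tau)=\exp^{g(\tau)}_{\gamma(\tau)}(sZ(\tau))$ and the comparison argument $\phi\le\psi$ with equality at $s=0$ merely spell out details the paper leaves implicit.
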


\begin{proof}
Let $\Gamma: (-\eps, \eps) \times [\tau_1, \tau_2] \to M$ be any variation of $\gamma$ with variation field $Z$ and such that 
$\nabla_{Z(\tau_1)} S(0, \tau_1)$ 
and 
$\nabla_{Z(\tau_2)} S(0, \tau_2)$ vanish. 
Since
\[
\left. \Hess_{g(\tau_1) \oplus g(\tau_2)} L \right|_{(x, \tau_1; y, \tau_2)} (\xi, \xi) \leq \left. \frac{d^2}{ds^2} \right|_{s=0} \LL (\Gamma_s),
\]
the claim follows from Corollary~\ref{secondvariationzwei}.
\end{proof}

Let now $Z_i^*$ ($i = 1, \ldots, d$) be 
space-time parallel fields along $\gamma$ 
satisfying $Z_i^* (\tau_1) = u e_i$ 
(and consequently $Z_i^* (\tau_2) = v e_i^*$),
and $Z_i(\tau) := \sqrt{ \tau / t } Z_i^*(\tau)$ 
(so that $\xi_i = Z_i(\tau_1) \oplus Z_i(\tau_2)$). 
In order to estimate
$\sum_{i=1}^d \left. \Hess_{g(\tau_1) \oplus g(\tau_2)} L \right|_{(x, \tau_1; y, \tau_2)} (\xi_i, \xi_i)$
using Corollary~\ref{hessianofl} 
we will compute $\sum_{i=1}^d H(\dot\gamma(\tau), Z_i(\tau))$ 
in the following 
(see \cite[Section~7.5.3]{chowetal} for a similar argument).
Set $I_1$, $I_2$ and $I_3$ by 
\begin{align*}
I_1 
& := 
-2 
\sum_{i=1}^d
\frac{\partial \Ric}{\partial \tau}(Z_i(\tau), Z_i(\tau)) 
,
\\
I_2 
& := 
\sum_{i=1}^d 
\Big[
  - \Hess R (Z_i(\tau), Z_i(\tau)) 
  + 2 \, | \! \Ric^\# (Z_i(\tau))|^2 
\\
& \hspace{8em}
  - \frac{1}{\tau} \Ric (Z_i(\tau), Z_i(\tau))
  - 2 \Rm (Z_i(\tau), \dot\gamma(\tau), \dot\gamma(\tau), Z_i(\tau))
\Big]
,
\\
I_3 
& := 
4 \sum_{i=1}^d
\Big[ 
  (\nabla_{Z_i(\tau)} \Ric)( Z_i(\tau), \dot\gamma(\tau)) 
  - 
  (\nabla_{\dot\gamma(\tau)} \Ric)(Z_i(\tau), Z_i(\tau)) 
\Big].
\end{align*}
Then 
$\sum_{i=1}^d H(\dot\gamma(\tau), Z_i(\tau)) = I_1 + I_2 + I_3$ holds.   
By a direct computation, 
\begin{equation} \label{eq:I3}
I_2 = 
\frac{\tau}{t} 
\left( 
    - \Delta R (\gamma(\tau)) 
    + 2 \left| \Ric \right|^2 (\gamma(\tau)) 
    - \frac{1}{\tau} R (\gamma(\tau))
    + 2 \Ric(\dot\gamma(\tau), \dot\gamma(\tau)) 
\right) .
\end{equation}
The contracted Bianchi identity 
$\dive \Ric = \frac{1}{2} \nabla R$ \cite[Lemma~7.7]{lee}
yields 
\begin{equation} \label{eq:I2}
I_3 
= 
\frac{4 \tau}{t} 
\left( 
    ( \dive \Ric )(\dot\gamma(\tau))
    - 
    (\nabla_{\dot\gamma(\tau)} R)(\gamma(\tau))
\right)
= 
- \frac{2\tau}{t} 
(\nabla_{\dot\gamma(\tau)} R)(\gamma(\tau))
. 
\end{equation}
For $I_1$, we have 
\begin{align} \nonumber 
I_1 
& = 
-2 \sum_{i=1}^d 
\left[ 
    \frac{d}{d \tau}( \Ric (Z_i(\tau), Z_i(\tau)) ) 
    - (\nabla_{\dot\gamma(\tau)} \Ric) (Z_i(\tau), Z_i(\tau)) 
    - 2 \Ric( \nabla_{\dot{\gm}(\tau)} Z_i(\tau), Z_i(\tau)) 
\right]
\\ \nonumber
& = 
- 2 \frac{d}{d \tau} 
\left( 
    \frac{\tau}{t} R (\gamma(\tau)) 
\right)
+ 2 \frac{\tau}{t} \nabla_{\dot\gamma(\tau)} R ( \gm(\tau) )
+ 4 \sum_{i=1}^d \Ric( \nabla_{\dot{\gm}(\tau)} Z_i(\tau), Z_i(\tau)) 
\\ \label{eq:I1}
& = 
-\frac{2\tau}{t} 
\left( 
    \frac{1}{\tau} R (\gamma(\tau)) 
    + \frac{\partial R}{\partial \tau}(\gamma(\tau))
\right)
+ 4 
\sum_{i=1}^d \Ric( \nabla_{\dot{\gm}(\tau)} Z_i(\tau), Z_i(\tau)) 
.
\end{align}
Since $Z_i$ satisfies \eqref{dgl},
\begin{align} \nonumber
4 \sum_{i=1}^d \Ric( \nabla_{\dot{\gm}(\tau)} Z_i(\tau), Z_i(\tau)) 
& = 
4 \sum_{i=1}^d 
\Ric( 
  -\Ric^\# ( Z_i(\tau) ) + \frac{1}{2 \tau} Z_i(\tau) 
  , 
  Z_i(\tau)
)
\\ \label{eq:I11}
& = 
- \frac{2\tau}{t} 
\left( 
    2 \left| \Ric \right|^2 (\gamma(\tau)) 
    - 
    \frac{1}{\tau} R (\gamma(\tau)) 
\right) 
.
\end{align}
By substituting \eqref{eq:I11} into \eqref{eq:I1}, 
\begin{equation} \label{eq:I12}
I_1 
= 
- \frac{2\tau}{t} 
\left( 
    \frac{\partial R}{\partial \tau}(\gamma(\tau))
    + 
    2 \left| \Ric \right|^2 (\gamma(\tau)) 
\right). 
\end{equation}
Hence, 
by combining \eqref{eq:I12}, \eqref{eq:I2} and \eqref{eq:I3}, 
\begin{align*}
\sum_{i=1}^d H(\dot\gamma(\tau), Z_i(\tau))
& = 
\frac{\tau}{t} 
\Big( 
- 2 \frac{\partial R}{\partial \tau}(\gamma(\tau)) 
- 2 \left| \Ric \right|^2 (\gamma(\tau))
- \Delta R (\gamma(\tau)) 
\\ 
& \hspace{6em} 
- \frac{1}{\tau} R (\gamma(\tau))
+ 2 \Ric (\dot\gamma(\tau), \dot\gamma(\tau)) 
- 2 (\nabla_{\dot\gamma(\tau)} R)(\gamma(\tau)) 
\Big)
.
\end{align*}
Inserting this into \eqref{hessl} we obtain
\begin{align*}
\sum_{i=1}^d & 
\left. \Hess_{g(\tau_1) \oplus g(\tau_2)} L \right|_{(x, \tau_1; y, \tau_2)} (\xi_i, \xi_i)
\\
& \leq 
\frac{1}{t} 
\int_{\tau_1}^{\tau_2} \tau^{3/2}
\Big( 
  2 \frac{\partial R}{\partial \tau}(\gamma(\tau)) 
  + 
  2 \left| \Ric \right|^2 (\gamma(\tau))
  + 
  \Delta R (\gamma(\tau)) 
\\
& \hspace{8em} 
  + 
  \frac{1}{\tau} R (\gamma(\tau))
  - 
  2 \Ric (\dot\gamma(\tau), \dot\gamma(\tau)) 
  + 
  2 (\nabla_{\dot\gamma(\tau)} R)(\gamma(\tau)) 
\Big) d \tau
\\
& \qquad 
+ 
\left. 
    \frac{d \sqrt{\tau}}{t} 
\right|_{\tau=\tau_1}^{\tau=\tau_2} 
- 
\left. 
    \frac{2 \tau^{3/2}}{t} R (\gamma(\tau)) 
\right|_{\tau=\tau_1}^{\tau=\tau_2}\\
& = 
\left. 
    \frac{d \sqrt{\tau}}{t} 
\right|_{\tau=\tau_1}^{\tau=\tau_2} 
+ 
\frac{1}{t} \int_{\tau_1}^{\tau_2} \tau^{3/2}
\Big( 
  2 \left| \Ric \right|^2 (\gamma(\tau)) 
  + 
  \Delta R (\gamma(\tau)) 
  - \frac{2}{\tau} R (\gamma(\tau))
  - 2 \Ric (\dot\gamma(\tau), \dot\gamma(\tau) ) 
\Big) d \tau 
\end{align*}
which completes the proof of Proposition~\ref{lambda}.

\section{Coupling via approximation by geodesic random walks}
\label{randomwalks}

To avoid a technical difficulty coming from 
singularity of $L$ on the $\LL$-cut locus, 
we provide an alternative way to constructing 
a coupling of Brownian motions 
by space-time parallel transport. 
In this section, we first define 
a coupling of geometric random walks 
which approximate $g(\tau)$-Brownian motion.  
Next, in order to provide a local uniform 
control of error terms 
coming from our discretization, 
we study several estimates 
of geometric quantities 
in subsection~\ref{sec:geom}. 
Those are obtained 
as a small modification of existing arguments 
in \cite{chowetal,topping,ye}. 
The $\LL$-cut locus is also reviewed and studied there. 
Finally, we will establish an analogue of arguments 
in section~\ref{sec:SDE} for coupled geodesic random walks 
to complete the proof of Theorem~\ref{mainresult}. 

Let us take a family of minimal $\LL$-geodesics 
$
\{  
 \gm_{xy}^{\tau_1 \tau_2} 
\; | \;
   \tauu_1 
    \le \tau_1 
    < \tau_2 
    \le \tauu_2,
   x, y \in M
\}
$ 
so that a map 
$
( x , \tau_1 ; y , \tau_2 ) 
\mapsto 
\gm_{xy}^{\tau_1 \tau_2}
$ 
is measurable. 
The existence of such a family of minimal $\LL$-geodesics 
can be shown in a similar way 
as discussed in the proof of 
\cite[Proposition~2.6]{lottvillani} 
since the family of 
minimal $\LL$-geodesics with fixed endpoints 
is compact 
(cf.~\cite[the proof of Lemma~7.27]{chowetal}). 
For each $\tau \in [ \tauu_1 , T ]$, 
take a measurable section $\Phi^{(\tau)}$ 
of $g (\tau)$-orthonormal frame bundle $\OO^{g(\tau)} (M)$ of $M$. 
For $x,y \in M$ and 
$\tau_1 , \tau_2 \in [ \tauu_1 , T ]$ with $\tau_1 < \tau_2$, 
let us define 
$
\Phi_i ( x , \tau_1 ; y , \tau_2 )
\in \mathcal{F} ( M )
$ 
for $i=1,2$ by 
\begin{align*} 
\Phi_1 ( x, \tau_1 ; y, \tau_2 ) 
& := 
\Phi^{(\tau_1)} (x) ,
\\
\Phi_2 ( x, \tau_1 ; y , \tau_2 ) 
& := 
m_{xy}^{\tau_1 \tau_2} \circ \Phi^{(\tau_1)} (x)
,
\end{align*}
where $m_{xy}^{\tau_1 \tau_2}$ is 
as given in Definition~\ref{def:pt}. 
Let us take a family of $\R^d$-valued 
i.i.d.~random variables $( \lambda_n )_{n \in \N}$ 
which are uniformly distributed 
on a unit ball centered at origin. 
We denote the (Riemannian) exponential map 
with respect to $g(\tau)$ at $x \in M$ 
by $\exp^{(\tau)}_x$. 
In what follows, 
we define a coupled geodesic random walk 
$
\mathbf{X}^\eps_t 
 = 
( X^\eps_{ \tauu_1 t } , Y^\eps_{ \tauu_2 t } )
$ 
with scale parameter $\eps > 0$ 
and 
initial condition 
$\mathbf{X}^\eps_s = ( x_1 , y_1 )$ 
inductively. 
First we set 
$( X^\eps_{ \tauu_1 s} , Y^\eps_{ \tauu_2 s} ) := ( x_1 , y_1 )$. 
For simplicity of notations, 
we set $t_n := ( s + \eps^2 n ) \wedge (T / \tauu_2)$. 
After we defined $( \mathbf{X}^\eps_t )_{t \in [ s , t_n ]}$, 
we extend it to $( \mathbf{X}^\eps_t )_{t \in [ s,  t_{n+1} ]}$ 
by 
\begin{align*}
\hat{\lambda}_{n+1}^{(i)} 
& : = 
\sqrt{d+2} 
\Phi_i ( 
 X^\eps_{\tauu_1 t_n} , 
 \tauu_1 t_n ; 
 Y^\eps_{\tauu_2 t_n} , 
 \tauu_2 t_n  
) 
\lambda_{n+1} 
,
\qquad i = 1, 2 ,
\\   
X^\eps_{ \tauu_1 t } 
& := 
\exp_{X^\eps_{ \tauu_1 t_n }}^{( \tauu_1 t_n )} 
\abra{
  \frac{ t - t_n }{\eps} 
  \sqrt{2 \tauu_1} 
  \hat{\lambda}_{n+1}^{(1)}
} , 
\\ 
Y^\eps_{ \tauu_2 t } 
& := 
\exp_{Y^\eps_{ \tauu_2 t_n } }^{( \tauu_2 t_n )} 
\abra{ 
  \frac{ t - t_n }{\eps} 
  \sqrt{2 \tauu_2} 
  \hat{\lambda}_{n+1}^{(2)} 
}. 
\end{align*}
We can (and we will) extend the definition of $X_{\tau}^\eps$ 
for $\tau \in [ T \tauu_1 / \tauu_2 , T ]$ in the same way. 
As in section \ref{sec:SDE},
$X_{\tauu_1 t}^\eps$ does not move when $\tauu_1 = 0$.
Note that $\sqrt{d+2}$ is a normalization factor 
in the sense 
$
\mathop{\mathrm{Cov}} ( \sqrt{d+2} \lambda_n ) 
= 
\mathrm{Id}
$.   
Let us equip path spaces $C ( [ a , b ] \to M )$ 
or $C ( [ a, b ] \to M \times M )$ 
with the uniform convergence topology
induced from $g(T)$. 
Here the interval $[a,b]$ will be chosen
appropriately in each context. 
By \eqref{eq:metric-bound} which we will review below,
different choices of a metric $g(\tau)$ from $g(T)$ 
always induce the same topology on path spaces. 
As shown in \cite{kuwada2}, 
$( X^\eps_\tau )_{\tau \in [ \tauu_1 s , T ]}$ and 
$( Y^\eps_\tau )_{\tau \in [\tauu_2 s , T ]}$ 
converge in law to 
$g(\tau)$-Brownian motions 
$( X_\tau )_{\tau \in [ \tauu_1 s , T ]}$ 
and 
$( Y_\tau )_{\tau \in [ \tauu_2 s , T ]}$ 
on $M$ 
with initial conditions 
$X_{\tauu_1 s} = x_1$, $Y_{\tauu_2 s} = y_1$ 
respectively 
as $\eps \to 0$ (when $\tauu_1 > 0$).
As a result, $\mathbf{X}^\eps$ is tight and 
hence there is a convergent subsequence of 
$\mathbf{X}^\eps$. 
We fix such a subsequence and use the same symbol 
$( \mathbf{X}^\eps )_\eps$ for simplicity of notations. 
We denote the limit in law of $\mathbf{X}^\eps$ 
as $\eps \to 0$ 
by 
$
\mathbf{X}_t
= 
( 
 X_{ \tauu_1 t } , 
 Y_{ \tauu_2 t } 
)
$. 
Recall that, in this paper, 
$g( \tau )$-Brownian motion 
means a time-inhomogeneous diffusion process 
associated with $\Delta_{g(\tau)}$ 
instead of $\Delta_{g(\tau)} / 2$. 

\begin{remark} \label{rem:gRW}
We explain the reason 
why our alternative construction works efficiently 
to avoid the problem arising from singularity of $L$. 
To make it clear, 
we begin with observing the essence of difficulties 
in the SDE approach we used in section~\ref{sec:SDE}. 
Recall that our argument is based on the It\^{o} formula. 
Hence non-differentiability of $L$ 
at the $\LL$-cut locus 
causes the technical difficulty. 
One possible strategy is to extend 
the It\^{o} formula for $\LL$-distance. 
Since $\LL$-cut locus is sufficiently thin, 
we can expect that 
the totality of times 
when our coupled particles stay there 
has measure zero. 
In addition, as that of Riemannian cut locus, 
the presence of $\LL$-cut locus would work 
to decrease the $\LL$-distance between coupled particles. 
Thus one might think it possible to extend It\^{o} formula 
for $\LL$-distance to the one involving 
a ``local time at the $\LL$-cut locus''. 
If we succeed in doing so, 
we will obtain a differential inequality 
which implies the supermartingale property 
by neglecting this additional term 
since it should be nonpositive. 

Instead of completing the above strategy, 
our alternative approach in this section 
directly provides a difference inequality 
without extracting the additional ``local time'' term. 
By dividing a minimal $\LL$-geodesic into two pieces, 
we can obtain a ``difference inequality'' of 
$\LL$-distance even when 
the pair of endpoints belongs to the $\LL$-cut locus 
(see Lemma~\ref{lem:2var-gRW}). 
In order to employ such an inequality, 
it is more suitable to work with discrete time processes. 
\end{remark}

\subsection{Preliminaries on the geometry of $\LL$-functional} 
\label{sec:geom}

Recall that we assume that $M$ has bounded curvature, 
so that there is a constant $C_0 < \infty $ 
such that 
%
\begin{equation} \label{eq:curv-bound}
\max_{ 
  ( x, \tau ) \in M \times [ \tauu_1 , T ] 
} 
| \Rm |_{g(\tau)} (x)
\vee 
| \Ric |_{g(\tau)} (x)
\le C_0.
\end{equation}
%
On the basis of \eqref{eq:curv-bound}, 
we have a comparison of Riemannian metrics at different times. 
That is, for $\tau_1 < \tau_2$,
\begin{equation} \label{eq:metric-bound}
\e^{-2 C_0 ( \tau_2 - \tau_1 )} 
g ( \tau_2 )
 \le 
g ( \tau_1 )
 \le 
\e^{2 C_0 ( \tau_2 - \tau_1 )}
g ( \tau_2 )
.
\end{equation}
Let $\rho_{g(\tau)}$ be 
the distance function on $M$ 
at time $\tau$. 
Note that a similar comparison between 
$\rho_{g (\tau_1)}$ and $\rho_{g(\tau_2)}$ 
follows from \eqref{eq:metric-bound}. 
We also obtain 
the following bounds for $L$ 
from \eqref{eq:curv-bound} and \eqref{eq:metric-bound}. 
Let $\gm \: : \: [ \tau_1 , \tau_2 ] \to M$ 
be a minimal $\LL$-geodesic. 
Then, for $\tau \in [ \tau_1 , \tau_2 ]$, 
\begin{multline} \label{eq:L-bound} 
\frac
    { \e^{-2 C_0 ( \tau_2 - \tau_1 ) } }
    {2 \sqrt{\tau_2} - \sqrt{\tau_1} } 
\rho_{g(T)} ( \gm (\tau_1) , \gm (\tau) )^2  
- 
\frac23 
d C_0 ( \tau_2^{3/2} - \tau_1^{3/2} ) 
\le 
L ( \gm (\tau_1) , \tau_1 ; \gm (\tau_2) , \tau_2 ) 
\\
\le 
\frac
    { \e^{2 C_0 ( \tau_2 - \tau_1 ) } } 
    {2 \sqrt{\tau_2} - \sqrt{\tau_1} } 
\rho_{g(T)} ( \gm ( \tau_1 ) , \gm ( \tau_2 ) )^2 
+  
\frac23 
d C_0 ( \tau_2^{3/2} - \tau_1^{3/2} ) 
\end{multline}
(see \cite[Lemma~7.13]{chowetal} 
and 
\cite[Proposition~B.2]{topping}). 
The same estimate holds for 
$\rho_{g(T)} ( \gm (\tau) , \gm (\tau_2) )^2$ 
instead of 
$\rho_{g(T)} ( \gm (\tau_1) , \gm (\tau) )^2$. 
Taking the fact that 
$\LL$-functional is \emph{not} invariant under 
re-parametrization of curves into account, 
we will introduce an estimate for the velocity of 
the minimal $\LL$-geodesic $\gm$. 
By a similar argument 
as in \cite[Lemma~7.13 (ii)]{chowetal}, 
there exists $\tau^* \in [ \tau_1 , \tau_2 ]$ 
such that 
\begin{equation} \label{eq:velo-bound0}
\tau^* 
\left| 
  \dot{\gm} (\tau^*) 
\right|_{g(\tau^*)}^2 
\le 
\frac{1}{2 ( \sqrt{\tau_2} - \sqrt{\tau_1} )}
\abra{ 
  L ( \gm (\tau_1) , \tau_1 ; \gm (\tau_2) , \tau_2 )
   + 
  \frac{ 2 d C_0 }{ 3 } ( \tau_2^{3/2} - \tau_1^{3/2} ) 
}
. 
\end{equation} 
Suppose $\tau_2 < T$. 
Then, as shown in \cite{shi} 
(see \cite{chowknopf} also), 
\eqref{eq:curv-bound} yields that 
there is a constant $C (d) >0$ 
depending only on $d$ 
such that 
\begin{equation} \label{eq:curv-bound1} 
\sup_{
  \tau \le \tau_2 , \, x \in M
} 
\left|
  \nabla \Rm  
\right|_{g(\tau)} (x) 
\le 
\frac{ C (d) C_0 }{ ( T - \tau_2 ) \wedge C_0^{-1} } =: C_0'
. 
\end{equation}
By virtue of \eqref{eq:curv-bound1}, 
there exists a constant $c_1, C_1 > 0$ 
which depends on 
$C_0$, $C_0'$ and $T$ such that 
for all $\tau_1' , \tau_2' \in [ \tau_1 , \tau_2 ]$ 
with $\tau_1' < \tau_2 '$, 
\begin{align} \label{eq:v-u-bound}
\tau_2' 
\left| 
 \dot{\gm} (\tau_2')  
\right|_{g (\tau_2')}^2  
& \le 
c_1 
\tau_1' 
\left| 
 \dot{\gm} (\tau_1') 
\right|_{g (\tau_1')}^2  
+ C_1 
, 
\\ \label{eq:v-l-bound}
\tau_1' 
\left| 
 \dot{\gm} (\tau_1')  
\right|_{g (\tau_1')}^2  
& \le 
c_1 
\tau_2' 
\left| 
 \dot{\gm} (\tau_2')  
\right|_{g (\tau_2')}^2 
+ C_1 
.
\end{align}
The first inequality in \eqref{eq:v-u-bound} 
can be shown similarly as \cite[Lemma~7.24]{chowetal}.   
It is due to a differential inequality 
based on the $\LL$-geodesic equation \eqref{eq:dt1}
which provides an upper bound of 
$
\partial_\tau 
( \tau | \dot{\gm} (\tau) |_{g(\tau)}^2 )
$. 
By considering 
a lower bound of the same quantity instead, 
we obtain the second inequality 
\eqref{eq:v-l-bound} 
in a similar way. 
Combining \eqref{eq:v-u-bound} and \eqref{eq:v-l-bound} 
with \eqref{eq:velo-bound0} and \eqref{eq:L-bound}, 
we can take constants $c_2 > 0$ and $C_2 > 0$ 
depending on $C_0$, $c_1$, $C_1$, $\tau_1$ and $\tau_2$ 
such that 
\begin{equation} \label{eq:velo-bound1}
\tau
\left| 
 \dot{\gm} (\tau)
\right|_{g(\tau)}^2 
\le 
c_2 
\rho_{g(T)} ( \gm (\tau_1) , \gm(\tau_2) )^2  
 + 
C_2 
\end{equation} 
for $\tau_1 \le \tau \le \tau_2$. 
Though $c_2$ and $C_2$ depends on $\tau_1$ and $\tau_2$, 
it is easy to see that 
$c_2$ and $C_2$ are uniformly bounded above 
as long as 
$\tau_2 - \tau_1$ and $T - \tau_2$ 
is uniformly away from 0. 

Let us recall the definition and some properties 
of $\LL$-cut locus according to 
\cite{chowetal,topping,ye}. 
Given $\tau , \tau' \in [ 0 , T )$ 
with $\tau < \tau'$ 
and $x \in M$, 
we define $\LL$-exponential map 
$\LL_{\tau, \tau'} \exp_{x} : T_x M \to M$ 
by $\LL_{\tau, \tau'} \exp_{x} (Z) = \gm (\tau')$, 
where $\gm$ is a unique $\LL$-geodesic 
from $(\tau , x)$ 
with the initial condition 
$\lim_{\tau' \downarrow \tau} \sqrt{\tau'} \dot{\gm} (\tau') = Z$.
Note that we can extend the domain of 
$\LL$-geodesic $\gm$ to the interval $[ \tau, T )$ 
by using \eqref{eq:v-u-bound} 
(see \cite[Lemma~7.25]{chowetal}). 
Set 
\begin{align*}
\Omega ( x ,\tau_1 ; \tau_2 )
& : = 
\bbra{ 
  Z \in T_x M
    \; \left| \; 
  \begin{array}{l}
    \gm \: : \: [ \tau_1 , \tau_2 ] \to M 
    \mbox{ 
      given by 
      $
       \gm (\tau) 
        := 
       \LL_{\tau_1 , \tau} \exp_x (Z)
      $
    } 
    \\
    \mbox{ is a minimal $\LL$-geodesic} 
  \end{array}
  \right.
} 
, 
\\
\bar{\tau}( x , \tau ; Z ) 
& : = 
\sup
\bbra{
  \tau \in ( \tau_1 , T ) 
  \; | \; 
  Z \in \Omega (x , \tau_1 ; \tau )
}
.
\end{align*}
Based on these notations, 
we define the $\LL$-cut locus 
$\Lcut$ 
by 
\begin{equation*}
\Lcut 
: = 
\bbra{
  ( x , \tau_1 ; y , \tau_2 ) 
  \; \left| \; 
  \begin{array}{l}
    x \in M , \tau_1 \in [ \tauu_1 , T ) , 
    \\
    y = \LL_{\tau_1 , \tau_2} \exp_x ( Z ) 
    \mbox{ for some $Z \in T_x M$}, 
    \\
    \tau_2 = \bar{\tau} ( x, \tau_1 ; Z ) 
    \in ( \tau_1 ,  T )
  \end{array}
  \right.
}
. 
\end{equation*}
As remarked in \cite{chowetal,topping,ye}, 
$\Lcut$ is a union of two different kinds of sets. 
The first one consists of 
$(x, \tau_1 ; y, \tau_2)$ 
such that there exists more than one 
minimal $\LL$-geodesics 
joining $(x, \tau_1)$ and $(y, \tau_2)$.  
The second consists of 
$(x, \tau_1 ; y , \tau_2 )$ 
such that 
$(y, \tau_2)$ is conjugate to $(x, \tau_1)$ 
along a minimal $\LL$-geodesic 
with respect to $\LL$-Jacobi field. 
Note that we can define exponential map 
in the reverse direction in $\tau$. 
By using this reverse exponential map, 
``reversed $\LL$-cut locus'' is defined 
and it is identified with $\Lcut$ 
by virtue of 
the above characterization of $\Lcut$. 

\subsection{Proof of Theorem~\ref{mainresult}}
\label{sec:proof}

For the proof of our main theorem 
based on discrete approximation, 
we will follow a similar way 
as in previous studies in this direction 
(see \cite{kuwada,kuwada2} and references therein). 
Our first task is to show 
a difference inequality of 
$\Lambda ( t , \mathbf{X}^\eps_t )$ 
in Lemma~\ref{lem:2var-gRW}. 
We begin with introducing some notations. 
Set 
$
\gm_n 
: = 
\gm_{ \mathbf{X}^\eps_{ t_n } }^{ \tauu_1 t_n , \tauu_2 t_n}
$ and 
let us define a vector field $\hat{\lambda}_{n+1}^\dag$ 
along $\gm_n$ 
by 
$
\hat{\lambda}_{n+1} (\tau) 
 = 
\sqrt{\tau / t_n } \lambda_{n+1}^* (\tau)
$, 
where $\lambda_{n+1}^*$ is 
a space-time parallel vector field along $\gm_n$ 
with initial condition 
$
\hat{\lambda}_{n+1}^* ( \tauu_1 t_n ) 
= 
\hat{\lambda}_{n+1}^{(1)}
$. 
Let us define random variables 
$\zt_n$ and $\Sigma_n$ as follows: 
%
\begin{align*}
\zt_{n+1} 
& : = 
\sqrt{2 \tau} 
\left. 
\dbra{ 
  \hat{\lambda}_{n+1}^\dag (\tau) , 
  \dot{\gm}_n (\tau) 
}_{g(\tau)}
\right|_{\tau = \tauu_1 t_n}^{\tauu_2 t_n} 
, 
\\
\Sigma_{n+1} 
& := 
\frac{1}{t_n} 
\left. 
  \tau^{3/2} 
  \abra{ 
    R_{g(\tau)} ( \gm_n (\tau) ) 
    - 
    | \dot{\gm}_n (\tau) |_{g(\tau)}^2 
  }
\right|_{\tau = \tauu_1 t_n}^{\tauu_2 t_n} 
\\
& \qquad + 
\Bigg(
  \left. 
   \abra{ 
     \frac{\sqrt{\tau}}{t_n} 
     - 
     2 \sqrt{\tau} 
     \Ric_{g(\tau)} 
     ( 
      \hat{\lambda}_{n+1}^\dag (\tau), 
      \hat{\lambda}_{n+1}^\dag (\tau)
     )
   }
  \right|_{\tau=\tauu_1 t_n}^{\tauu_2 t_n} 
\\ 
& \hspace{18em} - 
  \int_{\tauu_1 t_n}^{\tauu_2 t_n} 
  \sqrt{\tau} 
  H \abra{ 
    \dot{\gm} (\tau) 
     , 
    \hat{\lambda}_{n+1}^\dag (\tau) 
  } 
  d \tau 
\Bigg)
.
\end{align*} 
Here $H$ is as given in \eqref{eq:H}.
For $M_0 \subset M$, 
we define  
$
\sg_{M_0} 
\: : \: 
C ( [ s , T/ \tauu_2 ] \to M \times M )  
\to 
[0, \infty )
$ 
by 
\begin{equation*}
\sg_{M_0} ( w, \tilde{w} )
: = 
\inf 
\bbra{ 
  t \ge s 
  \; | \; 
  w_{ \tauu_1 t } \notin M_0 
  \mbox{ or } 
  w_{ \tauu_2 t } \notin M_0 
}
. 
\end{equation*}
For simplicity of notations, 
$\sg_{M_0} ( \mathbf{X}^\eps )$ and 
$\sg_{M_0} ( \mathbf{X} )$ are denoted 
by $\sg_{M_0}^\eps$ and $\sg_{M_0}^0$ 
respectively. 
As shown in \cite{kuwada2}, 
for any $\eta > 0$, 
we can take a compact set $M_0 \subset M$ 
such that 
$
\lim_{\eps \to 0} 
\P [ \sg_{M_0}^\eps \le T ] \le \eta
$ 
holds (cf.~\cite{kuwadaphilipowski}). 
\begin{lemma} \label{lem:2var-gRW}
Let $M_0 \subset M$ be compact. 
Then 
there exist a family of random variables 
$( Q^\eps_n )_{ n \in \N , \eps > 0 }$ 
and a family of deterministic constants 
$( \delta (\eps) )_{\eps > 0}$ 
with $\lim_{\eps \to 0} \delta (\eps) = 0$ 
satisfying 
\begin{equation} \label{eq:error}
\sum_{ 
  n ; \; 
  t_n < \sg_{M_0}^\eps \wedge ( T / \tauu_2 ) 
} 
Q^\eps_n 
\le 
\delta (\eps) 
\end{equation}
such that 
\begin{equation} \label{eq:2var-gRW}
\Lambda ( t_{n+1} , \mathbf{X}^\eps_{ t_{n+1} } ) 
\le 
\Lambda ( t_n , \mathbf{X}^\eps_{ t_n } ) 
+ \eps \zt_{n+1}  
+ \eps^2 \Sigma_{n+1} 
+ Q^\eps_{n+1} 
. 
\end{equation}
\end{lemma}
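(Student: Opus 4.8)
The plan is to obtain the difference inequality by splitting the minimal $\LL$-geodesic $\gm_n$ from $(X^\eps_{\tauu_1 t_n},\tauu_1 t_n)$ to $(Y^\eps_{\tauu_2 t_n},\tauu_2 t_n)$ into two pieces at the midtime $\tauu_1 t_{n+1}$ (equivalently, splicing in the forward Riemannian geodesic steps that define $X^\eps$ and $Y^\eps$), and comparing $\Lambda(t_{n+1},\mathbf{X}^\eps_{t_{n+1}})$ against the length of a carefully chosen competitor curve. First I would build the competitor: on $[\tauu_1 t_{n+1},\tauu_2 t_n]$ use the family of minimal $\LL$-geodesics whose endpoints vary along the variations $\tau\mapsto\exp$-images of the increments $\hat\lambda_{n+1}^{(i)}$, i.e.\ a variation $\Gamma$ of $\gm_n$ whose variation field is exactly $Z_i(\tau)=\sqrt{\tau/t_n}\,\lambda_{n+1}^*(\tau)$ of the form \eqref{v}; then prepend and append the geodesic random-walk steps of length $O(\eps)$ at the two endpoints. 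Because $\Lambda(t_{n+1},\cdot)$ is the infimum of $\LL$ over curves with the prescribed endpoints, the value $\Lambda(t_{n+1},\mathbf{X}^\eps_{t_{n+1}})$ is bounded above by $\LL$ of this competitor, and this holds \emph{regardless of whether $(\,\cdot\,;\,\cdot\,)$ lies in the $\LL$-cut locus} — this is precisely the point of the discrete approach flagged in Remark~\ref{rem:gRW}.

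Next I would Taylor-expand $\LL$ of the competitor to second order in $\eps$. The zeroth-order term is $\Lambda(t_n,\mathbf{X}^\eps_{t_n})$ (after accounting for the shift of the lower integration limit from $\tauu_1 t_n$ to $\tauu_1 t_{n+1}$, which together with the time-derivative contribution reproduces the $\partial\Lambda/\partial t$ terms encoded in $\Sigma_{n+1}$); the first-order term in $\eps$ is governed by the first variation formula \cite[Lemma~7.15]{chowetal}, which — exactly as in \eqref{eq:first} in Remark~\ref{rem:mart} — produces the boundary pairing $\sqrt{2\tau}\langle\hat\lambda_{n+1}^\dag(\tau),\dot\gm_n(\tau)\rangle_{g(\tau)}\big|_{\tauu_1 t_n}^{\tauu_2 t_n}$, i.e.\ $\eps\,\zt_{n+1}$; the second-order term in $\eps$ is controlled by the second variation formula of Lemma~\ref{secondvariation}, and since the variation field has the special form \eqref{v} with $Z^*$ space-time parallel, Corollary~\ref{secondvariationzwei} applies and the term $\int\sqrt\tau|\nabla_{\dot\gm}Z+\Ric^\#Z-\tfrac{1}{2\tau}Z|^2\,d\tau$ vanishes, leaving exactly the quantity collected in $\Sigma_{n+1}$ (boundary Ricci and $\sqrt\tau/t_n$ terms, minus $\int\sqrt\tau H\,d\tau$). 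Everything not captured by $\Lambda(t_n,\mathbf{X}^\eps_{t_n})+\eps\zt_{n+1}+\eps^2\Sigma_{n+1}$ is collected into $Q^\eps_{n+1}$.

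The remaining work — and the main obstacle — is to verify the summability bound \eqref{eq:error}, i.e.\ that $Q^\eps_{n+1}$ is genuinely of order $o(\eps^2)$ \emph{uniformly} over the steps $n$ with $t_n<\sg^\eps_{M_0}\wedge(T/\tauu_2)$, so that the $O(\eps^{-2})$ many terms sum to $\delta(\eps)\to 0$. For this I would use the a~priori control established in subsection~\ref{sec:geom}: the metric comparison \eqref{eq:metric-bound}, the $\LL$-distance bounds \eqref{eq:L-bound}, and crucially the velocity bounds \eqref{eq:velo-bound1} together with \eqref{eq:v-u-bound}–\eqref{eq:v-l-bound}, which give a \emph{deterministic} bound on $\tau|\dot\gm_n(\tau)|^2$ in terms of $\rho_{g(T)}$ of the endpoints — hence a uniform bound once the walk is confined to the compact set $M_0$ before time $\sg^\eps_{M_0}$. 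The error $Q^\eps_{n+1}$ consists of: (i) the cubic-and-higher remainder in the Taylor expansion of $\LL$ along the competitor, bounded using these velocity and curvature bounds plus $\nabla\Rm$ control from \eqref{eq:curv-bound1}; (ii) the discrepancy between the exponential-map steps of the random walk and the true variation field $Z_i$ at the spliced endpoints, of order $\eps^3$ per step by standard comparison of $\exp^{(\tau)}$ with parallel transport; and (iii) the mismatch between $g(\tauu_1 t_n)$ and $g(\tauu_1 t_{n+1})$ (and similarly at the other end), of order $\eps^2\cdot\eps^2$ by \eqref{eq:metric-bound}. Each is $o(\eps^2)$ uniformly on $M_0$, and since the number of steps is at most $(T/\tauu_2-s)\eps^{-2}$, the sum is $\delta(\eps)=o(1)$, establishing \eqref{eq:error} and completing the proof of the lemma.
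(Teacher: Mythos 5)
Your overall strategy is genuinely different from the paper's: you bound $\Lambda(t_{n+1},\mathbf{X}^\eps_{t_{n+1}})$ from above by $\LL$ of an explicit competitor coming from a variation of $\gm_n$ with variation field of the form \eqref{v}, and expand via the first and second variation formulas (Corollary~\ref{secondvariationzwei}), so that smoothness of $L$ at the $\LL$-cut locus is never invoked. The paper proceeds differently: it splits $\gm_n$ at the mid-time $\tau_n^*=(\tauu_1+\tauu_2)t_n/2$, moves the midpoint to $x_{n+1}^*=\exp^{(\tau_n^*)}_{\gm_n(\tau_n^*)}\abra{\sqrt{\tauu_1+\tauu_2}\,\hat{\lambda}^\dag_{n+1}(\tau_n^*)}$, uses the triangle inequality for $L$ together with the equality of $\Lambda(t_n,\mathbf{X}^\eps_{t_n})$ with the sum of the two $L$-values through the midpoint, and then Taylor-expands the smooth function $L$ on each half, whose endpoint pairs never lie in $\Lcut$; uniformity of the $o(\eps^2)$ remainder comes from compact sets $G_1,G_2$ with $\bar{G}_i\cap\Lcut=\emptyset$ (closedness of $\Lcut$ and smoothness of $L$ off it), not only from the velocity and curvature bounds you cite. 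The spatial part of your construction is essentially correct: with $\Gamma(s,\tau)=\exp^{(\tau)}_{\gm_n(\tau)}\abra{s\sqrt{2}\,\hat{\lambda}^\dag_{n+1}(\tau)}$ the endpoint curves are exactly the random-walk geodesic steps, the boundary terms $\langle\dot{\gm},\nabla_{Z}S\rangle$ vanish, and the first and second variation formulas produce $\zt_{n+1}$ and the Hessian part of $\Sigma_{n+1}$.

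The genuine gap is your treatment of the mismatch of time intervals. The competitor must join $(X^\eps_{\tauu_1 t_{n+1}},\tauu_1 t_{n+1})$ to $(Y^\eps_{\tauu_2 t_{n+1}},\tauu_2 t_{n+1})$, whereas $\Gamma(\eps,\cdot)$ is parametrized by $[\tauu_1 t_n,\tauu_2 t_n]$. Prepending/appending the random-walk steps as curve segments, as you propose, does not work: a displacement of size $O(\eps)$ traversed over the time interval $[\tauu_2 t_n,\tauu_2 t_{n+1}]$, whose length is $\tauu_2\eps^2$, contributes $\int\sqrt{\tau}\,|\dot{\gamma}|^2\,d\tau\asymp \eps^2/\eps^2=O(1)$ to $\LL$, so the error per step is of order one, the sum over the $O(\eps^{-2})$ steps blows up, and \eqref{eq:error} fails. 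Your fallback — that the shift of the integration limits "reproduces the $\partial\Lambda/\partial t$ terms encoded in $\Sigma_{n+1}$" — is circular precisely at the points the lemma must cover: on $\Lcut$ the function $\Lambda$ need not be differentiable in $t$, so the first group of terms in $\Sigma_{n+1}$ (which in the smooth case is $\partial_t\Lambda$ via \eqref{eq:dt}) cannot be invoked as a derivative of $\Lambda$. It has to be produced either by building the time change into the variation itself (for instance, composing $\Gamma(s,\cdot)$ with the affine reparametrization of $[\tauu_1(t_n+s^2),\tauu_2(t_n+s^2)]$ onto $[\tauu_1 t_n,\tauu_2 t_n]$ and expanding $\LL$ of the reparametrized varied curve directly), or by the paper's midpoint-splitting argument; your error items (ii) and (iii) are symptoms of this unresolved step. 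Until this is repaired, \eqref{eq:2var-gRW} with the stated $\Sigma_{n+1}$ and a summable $Q^\eps_n$ is not established.
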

\begin{proof}
When 
$( 
 X^\eps ( \tauu_1 t_n ) , \tauu_1 t_n 
 ; 
 Y^\eps ( \tauu_2 t_n ) , \tauu_2 t_n  
) 
\notin \Lcut
$, 
the inequality \eqref{eq:2var-gRW} 
follows from the Taylor expansion 
with the error term $Q^\eps_{n+1} = o (\eps^2)$. 
Indeed, 
the first variation formula 
(\cite[Lemma~7.15]{chowetal} cf.~\eqref{eq:first})
produces $\eps \zt_{n+1}$ and 
Corollary~\ref{hessianofl} together with \eqref{eq:dt} 
implies the bound $\eps^2 \Sigma_{n+1}$ 
of the second order term. 
To include the case 
$( 
 X^\eps ( \tauu_1 t_n ) , \tauu_1 t_n 
 ; 
 Y^\eps ( \tauu_2 t_n ) , \tauu_2 t_n  
) 
\in \Lcut
$ 
as well as 
to obtain a uniform bound \eqref{eq:error}, 
we extend this argument. 
Set $\tau_n^* := ( \tauu_1 + \tauu_2 ) t_n / 2$. 
Then we can show 
\begin{align*}
( 
 X^\eps_{ \tauu_1 t_n } , \tauu_1 t_n 
 ; 
 \gm_n ( \tau_{n}^* ) , \tau_n^* 
) 
& \notin \Lcut ,
\\
( 
 \gm_n ( \tau_{n}^* ) , \tauu_{n}^* ; 
 X^\eps_{ \tauu_2 t_n } , \tauu_2 t_n 
) 
& \notin \Lcut 
\end{align*} 
since minimal $\LL$-geodesics 
with these pair of endpoints 
can be extended with keeping its minimality 
(cf.~see \cite[Section 7.8]{chowetal} and \cite{ye}). 
Set 
$
x_{n+1}^* = 
  \exp_{\gm_n ( \tau_{n}^* )}^{(\tau_n^*)} 
  \abra{ 
    \sqrt{\tauu_1 + \tauu_2} 
    \lambda_{n+1}^\dag (\tau_n^*) 
  } 
$. 
The triangle inequality for $L$ yields  
\begin{align*}
\Lambda ( t_n , \mathbf{X}^\eps_{ t_n } ) 
& =  
L 
( 
 X^\eps_{ \tauu_1 t_n } , \tauu_1 t_n 
 ; 
 \gm_n ( \tau_{n}^* ) , \tau_{n}^* 
) 
+ 
L 
( 
 \gm_n ( \tau_{n}^* ) , \tau_{n}^* 
 ; 
 X^\eps_{ \tauu_2 t_n} , \tauu_2 t_n 
) 
, 
\\
\Lambda ( t_{n+1} , \mathbf{X}^\eps_{ t_{n+1} } ) 
& \le 
L 
\abra{ 
  X^\eps_{ \tauu_1 t_{n+1} } 
  , 
  \tauu_1 t_{n+1} 
  ; 
  x_{n+1}^*
  , 
  \tau_{n+1}^* 
} 
+ 
L 
\abra{ 
  x_{n+1}^* 
  , 
  \tau_{n+1}^* 
  ; 
  X^\eps_{ \tauu_2 t_{n+1}} 
  , 
  \tauu_2 t_{n+1} 
}
. 
\end{align*}
Hence 
\begin{align*}
\Lambda ( t_{n+1} , \mathbf{X}^\eps_{ t_{n+1} } ) 
- 
\Lambda ( t_n , \mathbf{X}^\eps_{ t_n } ) 
& \le
\abra{ 
  L 
  \abra{ 
    X^\eps_{ \tauu_1 t_{n+1} } 
    , 
    \tauu_1 t_{n+1} 
    ; 
    x_{n+1}^*
    , 
    \tau_{n+1}^* 
  } 
-
L 
( 
 X^\eps_{ \tauu_1 t_n } , \tauu_1 t_n 
 ; 
 \gm_n ( \tau_{n}^* ) , \tau_{n}^* 
) 
}
\\ & \quad 
+ 
\abra{
  L 
  \abra{ 
    x_{n+1}^* 
    , 
    \tau_{n+1}^* 
    ; 
    X^\eps_{ \tauu_2 t_{n+1}} 
    , 
    \tauu_2 t_{n+1} 
  }
  -
  L 
  ( 
  \gm_n ( \tau_{n}^* ) , \tau_{n}^* 
  ; 
  X^\eps_{ \tauu_2 t_n} , \tauu_2 t_n 
  ) 
}
\end{align*}
and 
the desired inequality with $Q_n^\eps = o ( \eps^2 )$ holds 
by applying the Taylor expansion 
to each term on the right hand side of 
the above inequality. 

We turn to showing 
the claimed control \eqref{eq:error} of 
the error term $Q^\eps_n$.  
Take $M_1 \supset M_0$ compact such that 
every minimal $\LL$-geodesic joining 
$( x , \tauu_1 t )$ and $( y , \tauu_2 t )$
is included in $M_1$ 
if $x,y \in M_0$ and $t \in [ s , T/ \tauu_2 ]$ .
Indeed, such $M_1$ exists 
since we have 
the lower bound of $L$ in \eqref{eq:L-bound} 
and $L$ is continuous. 
Let us define a set $A$ by  
\[
A : = 
\bbra{ 
  ( ( \tau_1 , x ) , ( \tau_3 , z ) , ( \tau_2 , y ) ) 
  \in 
  ( [ \tauu_1 , T ] \times M_1 )^3 
  \; \left| \;
      \begin{array}{l}
        x ,y \in M_0 , 
        \\
        \tau_2 - \tau_1 \ge ( \tauu_2 - \tauu_1 ) s ,
        \\
        \tau_3 = ( \tau_1 + \tau_2 ) / 2 ,
        \\
        L ( x , \tau_1 ; z , \tau_3 ) 
         + 
        L ( z , \tau_3 ; y , \tau_2 ) 
        \\
         = 
        L ( x , \tau_1 ; y , \tau_2 ) 
      \end{array}
  \right. 
}
. 
\]
Note that $A$ is compact. 
Let $\pi_1 , \pi_2 \: : \: A \to ( [ \tauu_1, T ] \times M_1 )^2$ be 
defined by 
\begin{align*}
\pi_1 ( ( \tau_1 , x ) , ( \tau_3 , z ) , ( \tau_2 , y ) ) 
& :=  
( ( \tau_1 , x ) , ( \tau_3 , z ) ) 
, 
\\
\pi_2 ( ( \tau_1 , x ) , ( \tau_3 , z ) , ( \tau_2 , y ) ) 
& := 
( ( \tau_3 , z ) , ( \tau_2 , y ) ) 
. 
\end{align*}
Then $\pi_1 (A)$ and $\pi_2 (A)$ are compact and 
$\pi_i (A) \cap \Lcut = \emptyset$ for $i=1,2$. 
The second assertion comes from the fact 
that $(z, \tau_3 )$ is on a minimal $\LL$-geodesic 
joining $( x, \tau_1 )$ and $( y, \tau_2 )$ 
for $( ( x, \tau_1 ) , ( z , \tau_3 ) , ( y , \tau_2 ) ) \in A$.  
Note that $\Lcut$ is closed 
(see \cite{topping}; 
though they assumed $M$ to be compact, 
an extension to the non-compact case is straightforward). 
Thus we can take relatively compact open sets 
$G_1 , G_2 \subset [ \tauu_1 , T ] \times M$ such that 
$\pi_i (H) \subset G_i$ and 
$\bar{G_i} \cap \Lcut = \emptyset$ for $i=1,2$. 
Then the Taylor expansion we discussed above can be done 
on $G_1$ or $G_2$ for sufficiently small $\eps$. 
Recall that $L$ is smooth outside of $\Lcut$ (see \cite{chowetal}). 
Thus the convergence $\eps^{-2} Q_n (\eps) \to 0$ as $\eps \to 0$ 
is uniform in $n$ and independent of $\mathbf{X}_{t_n}^\eps$ 
as long as $t_n < \sg_{M_0}^\eps \wedge ( T / \tauu_2 )$. 
Since the cardinality of 
$\{ n \; | \; t_n < \sg_{M_0}^\eps \wedge ( T / \tauu_2 ) \}$ 
is of order at most $\eps^{-2}$, 
the assertion \eqref{eq:error} holds. 
\end{proof} 
We next establish the corresponding difference inequality 
for $\Theta ( t , \mathbf{X}^\eps_t )$ 
(Corollary~\ref{cor:T2var-gRW}). 
For that, we show the following auxiliary lemma.  
\begin{lemma} \label{lem:var-bound} 
Let $T_0 < T$. 
\begin{enumerate}
\item[(i)]
There exist deterministic constants 
$c_2' > 0$ and $C_2' > 0$ 
such that 
\begin{align*} 
| \zt_{n} | 
& \le 
c_2' \rho_{g(T)} ( \mathbf{X}_{t_{n-1}}^\eps ) 
+ C_2' , 
&  
| \Lambda ( t_n , \mathbf{X}_{t_n}^\eps ) | 
& \le 
c_2' \rho_{g(T)} ( \mathbf{X}_{t_n}^\eps )^2 
+ C_2' 
\end{align*}
if $t_n \le T_0 / \tauu_2$. 
\item[(ii)]
Take $M_0 \subset M$ compact. 
Then there is a constant $R = R ( T_0 , M_0 ) >0$ 
such that $| \Sigma_n | \le R$ holds 
if $t_n < \sg_{M_0}^\eps \wedge ( T_0 / \tauu_2 )$.  
\end{enumerate}
\end{lemma}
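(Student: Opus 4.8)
The plan is to reduce both assertions to three uniform estimates that hold on the time-slab $\{\tauu_1\le\tau\le T_0\}$, and then to bound each summand appearing in the definitions of $\zt_n$, $\Lambda(t_n,\cdot)$ and $\Sigma_n$ by inspection. The three ingredients are: (a) a bound on the norm of the transported increments, $|\hat\lambda_{n+1}^\dag(\tau)|_{g(\tau)}\le\sqrt{(d+2)\,\tau/t_n}$, which holds because $\Phi^{(\tauu_1 t_n)}(\cdot)$ is a $g(\tauu_1 t_n)$-orthonormal frame and $|\lambda_{n+1}|\le 1$, so $|\hat\lambda_{n+1}^{(1)}|_{g(\tauu_1 t_n)}\le\sqrt{d+2}$, while the norm of the space-time parallel field $\lambda_{n+1}^*$ is constant in $\tau$ by Remark~\ref{rem:isom}; (b) the velocity estimate \eqref{eq:velo-bound1} applied to $\gm_n$ with $\tau_1=\tauu_1 t_n$, $\tau_2=\tauu_2 t_n$, giving $\tau|\dot\gm_n(\tau)|_{g(\tau)}^2\le c_2\,\rho_{g(T)}(\mathbf X^\eps_{t_n})^2+C_2$ with $c_2,C_2$ bounded uniformly over $s\le t_n\le T_0/\tauu_2$, since then $\tau_2-\tau_1=(\tauu_2-\tauu_1)t_n\ge(\tauu_2-\tauu_1)s$ and $T-\tau_2\ge T-T_0$ both stay away from $0$; and (c) uniform pointwise bounds, on $\{\tauu_1\le\tau\le T_0\}$,
\[
\sup_{x\in M,\ \tauu_1\le\tau\le T_0}\bigl(|\Rm|+|\Ric|+|R|+|\nabla\Ric|+|\Hess R|+|\partial_\tau\Ric|\bigr)_{g(\tau)}(x)\le K_0=K_0(d,C_0,T,T_0),
\]
obtained from \eqref{eq:curv-bound}, \eqref{eq:curv-bound1} and Shi's derivative estimates \cite{shi} (see also \cite{chowknopf}) for one further covariant derivative, together with the standard evolution equation for $\Ric$ under Ricci flow.

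For part~(i), I would first handle $\zt_n$. Writing $\zt_n=\sqrt{2\tau}\langle\hat\lambda_n^\dag(\tau),\dot\gm_{n-1}(\tau)\rangle_{g(\tau)}\big|_{\tau=\tauu_1 t_{n-1}}^{\tauu_2 t_{n-1}}$ and applying Cauchy--Schwarz to each of the two boundary terms produces a sum of terms of the form $\sqrt{2\tau}\,|\hat\lambda_n^\dag(\tau)|_{g(\tau)}\,|\dot\gm_{n-1}(\tau)|_{g(\tau)}\le\sqrt{2(d+2)/t_{n-1}}\,\sqrt{\tau}\cdot\sqrt{\tau}\,|\dot\gm_{n-1}(\tau)|_{g(\tau)}$; inserting $t_{n-1}\ge s\ge 1$, $\tau\le T_0$ and (b) gives $|\zt_n|\le c_2'\,\rho_{g(T)}(\mathbf X^\eps_{t_{n-1}})+C_2'$. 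When $\tauu_1=0$ the boundary term at $\tau=0$ simply vanishes, because there $\sqrt{\tau}\,|\dot\gm_{n-1}(\tau)|$ stays bounded while the prefactor $\tau/\sqrt{t_{n-1}}\to 0$. The bound on $\Lambda(t_n,\mathbf X^\eps_{t_n})=L(X^\eps_{\tauu_1 t_n},\tauu_1 t_n;Y^\eps_{\tauu_2 t_n},\tauu_2 t_n)$ is then immediate from \eqref{eq:L-bound}: the upper estimate controls the $\rho_{g(T)}(\mathbf X^\eps_{t_n})^2$-term (its coefficient is $\le\e^{2C_0T_0}/(2(\sqrt{\tauu_2}-\sqrt{\tauu_1})\sqrt{s})$ and the additive part is $\le\tfrac23 dC_0 T_0^{3/2}$), while the lower estimate gives $\Lambda(t_n,\mathbf X^\eps_{t_n})\ge-\tfrac23 dC_0 T_0^{3/2}$.

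For part~(ii), the hypothesis $t_n<\sg_{M_0}^\eps\wedge(T_0/\tauu_2)$ forces $X^\eps_{\tauu_1 t_{n-1}},Y^\eps_{\tauu_2 t_{n-1}}\in M_0$, hence $\rho_{g(T)}(\mathbf X^\eps_{t_{n-1}})\le D:=\sup_{x,y\in M_0}\rho_{g(T)}(x,y)<\infty$, so by (b) $\tau|\dot\gm_{n-1}(\tau)|_{g(\tau)}^2\le c_2 D^2+C_2=:K_1$ on $[\tauu_1 t_{n-1},\tauu_2 t_{n-1}]$. I would then bound $\Sigma_n$ summand by summand. The boundary contributions involving $\tfrac1{t_{n-1}}\tau^{3/2}R$, $\tfrac1{t_{n-1}}\tau^{3/2}|\dot\gm_{n-1}|^2=\tfrac{\sqrt{\tau}}{t_{n-1}}\cdot\tau|\dot\gm_{n-1}|^2$, $\tfrac{\sqrt{\tau}}{t_{n-1}}$ and $\sqrt{\tau}\,\Ric(\hat\lambda_n^\dag,\hat\lambda_n^\dag)$ are each bounded by a constant through (a), (c), $t_{n-1}\ge s$, $\tau\le T_0$ and $K_1$. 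For the integral $\int_{\tauu_1 t_{n-1}}^{\tauu_2 t_{n-1}}\sqrt{\tau}\,H(\dot\gm_{n-1}(\tau),\hat\lambda_n^\dag(\tau))\,d\tau$ I substitute $Z(\tau)=\hat\lambda_n^\dag(\tau)$, with $|Z(\tau)|_{g(\tau)}^2=(d+2)\tau/t_{n-1}$, into the definition \eqref{eq:H} of $H$ and check that each of the seven terms, multiplied by $\sqrt{\tau}$, is bounded: the $\partial_\tau\Ric$, $\Hess R$ and $|\Ric^\#(\cdot)|^2$ terms are $\le K_0\sqrt{\tau}\,|Z|^2$; the term $\tfrac1\tau\Ric(Z,Z)$ is $\le C_0(d+2)/t_{n-1}$ because the factor $\tau$ in $|Z|^2$ cancels the $\tfrac1\tau$ — this is exactly what prevents a blow-up when $\tauu_1=0$; the term $\Rm(Z,\dot\gm,\dot\gm,Z)$ is $\le C_0(d+2)(1/t_{n-1})\,\tau|\dot\gm_{n-1}|^2\le C_0(d+2)K_1/s$; and the two $\nabla\Ric$ terms are $\le K_0(d+2)(1/t_{n-1})\,\sqrt{\tau}\cdot\tau|\dot\gm_{n-1}(\tau)|$, where $\sqrt{\tau}\cdot\tau|\dot\gm_{n-1}(\tau)|=\sqrt{\tau}\cdot\sqrt{\tau}\cdot\sqrt{\tau|\dot\gm_{n-1}|^2}\le T_0\sqrt{K_1}$. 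Integrating over an interval of length $(\tauu_2-\tauu_1)t_{n-1}\le T_0$ and summing the pieces yields a constant $R=R(T_0,M_0)$, as asserted.

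I expect no genuine obstacle: the two points that need care are (a) the bookkeeping ensuring that the potentially singular quantities near $\tau=0$ always occur in the bounded combinations $|Z|^2/\tau$, $\tau|\dot\gm|^2$ and $\sqrt{\tau}\,|\dot\gm|$, so all estimates are uniform including the degenerate case $\tauu_1=0$; and (b) verifying that the uniformity of $c_2,C_2$ in \eqref{eq:velo-bound1} and of the Shi-type bound $K_0$ relies only on $\tau$ being bounded away from $T$ and $\tauu_2-\tauu_1$ away from $0$, which is guaranteed precisely by $s\le t_n\le T_0/\tauu_2$.
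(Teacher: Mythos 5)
Your proof is correct and, in structure, it is the paper's: part (i) is exactly the combination of the velocity bound \eqref{eq:velo-bound1} (with its uniformity over $s\le t_n\le T_0/\tauu_2$, since $\tauu_2 t_n\le T_0<T$ and $(\tauu_2-\tauu_1)t_n\ge(\tauu_2-\tauu_1)s$) with the two-sided bound \eqref{eq:L-bound}, and part (ii) reduces, as in the paper, to bounding $\sqrt{\tau}\,H(\dot\gm_{n-1}(\tau),\hat\lambda_n^\dag(\tau))$ using $|\hat\lambda_n^\dag(\tau)|_{g(\tau)}^2\le(d+2)\tau/t_{n-1}$ (space-time parallel transport is isometric) together with the velocity bound, which on the event $\{t_n<\sg_{M_0}^\eps\wedge(T_0/\tauu_2)\}$ becomes uniform because the endpoints lie in the compact set $M_0$. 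The one place you diverge is the source of boundedness of the curvature tensors occurring in \eqref{eq:H}: the paper takes a compact set $M_1\supset M_0$ containing all minimal $\LL$-geodesics between points of $M_0$ (as in the proof of Lemma~\ref{lem:2var-gRW}) and uses mere continuity of those tensor fields on this compact set, whereas you invoke global bounds on $\Hess R$ and $\partial_\tau\Ric$ via Shi-type estimates one covariant derivative beyond \eqref{eq:curv-bound1}, the only derivative bound the paper actually quotes. Both are legitimate: your route needs the (standard, but not stated in the paper) second-order Shi estimate on $\{\tau\le T_0\}$ and in exchange makes the curvature part of the bound independent of $M_0$, while the paper's compactness-and-continuity argument stays within the tools already set up; in either case the cancellations you isolate --- $|Z|^2/\tau$, $\tau|\dot\gm|^2$ and $\sqrt{\tau}\cdot\tau|\dot\gm|$ all bounded --- are exactly what makes the estimate uniform down to the degenerate case $\tauu_1=0$.
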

\begin{proof}
By the definition of $\zt_{n}$, 
we have 
\begin{equation*} 
| \zt_{n} | 
\le 
\sqrt{2(d+2)} t_{n-1} 
\abra{ 
  \tauu_1 | \dot{\gm}_{n-1} ( \tauu_1 t_{n-1} ) |_{g(\tauu_1 t_{n-1})} 
  + 
  \tauu_2 | \dot{\gm}_{n-1} ( \tau_2 t_{n-1} ) |_{g(\tauu_2 t_{n-1})} 
}
. 
\end{equation*} 
Thus 
the desired bound for $| \zt_{n} |$ 
follows from 
\eqref{eq:velo-bound1} and \eqref{eq:metric-bound}. 
Similarly, 
the estimate 
for $\Lambda ( t_n , \mathbf{X}_{t_n}^\eps )$ 
follows from 
\eqref{eq:L-bound} and \eqref{eq:metric-bound}.    
For the assertion (ii), 
we deal with the integral involving $H$ 
in the definition of $\Sigma_n$. 
Note that 
every tensor field appeared in the definition of $H$ 
is continuous. 
As in the proof of Lemma~\ref{lem:2var-gRW}, 
take $M_1 \supset M_0$ compact such that 
every minimal $\LL$-geodesic joining 
$(x, \tauu_1 t)$ and $(y, \tauu_2 t)$ is 
included in $M_1$ if $x,y \in M_0$ and 
$t \in [ s , T /\tauu_2 ]$. 
Since 
$\mathbf{X}_{t_{n-1}}^\eps \in M_0 \times M_0$ holds 
on the event 
$\{ t_n < \sigma_{M_0}^\eps \wedge ( T_0 / \tauu_2 ) \}$, 
the upper bound \eqref{eq:velo-bound1} of 
$\sqrt{\tau} | \dot{\gm} (\tau) |$ 
implies that 
$H ( \dot{\gm}_n (\tau), Z (\tau) )$ is 
uniformly bounded 
for any vector field $Z (\tau)$ along $\gm_n$ 
of the form $Z (\tau) = \sqrt{ \tau / t_n} Z^* (\tau)$ 
with a space-time parallel vector field $Z^* (\tau)$ 
satisfying $| Z^* (\tau) |_{g (\tau)} \le 1$. 

This fact yields an expected bound for the integral. 
For any other terms in the definition of $\Sigma_n$, 
we can estimate them as in the assertion (i). 
\end{proof}
By virtue of Lemma~\ref{lem:var-bound}, 
$\Lambda ( t_n , \mathbf{X}_{t_n}^\eps )$, $\zt_n$ and $\Sigma_n$ 
are uniformly bounded 
on the event 
$\{ t_n < \sg_{M_0}^\eps \wedge ( T_0 / \tauu_2 ) \}$ 
for $T_0 < T$. 
Thus Lemma~\ref{lem:2var-gRW} yields the following: 
\begin{corollary} \label{cor:T2var-gRW}
Let $T_0 < T$ and $M_0 \subset M$ be a compact set. 
Then 
there exist a family of random variables 
$( \tilde{Q}^\eps_n )_{ n \in \N, \eps > 0 }$ 
and a family of deterministic constants 
$( \tilde{\delta} (\eps) )_{\eps > 0}$ 
with $\lim_{\eps \to 0} \tilde{\delta} (\eps) = 0$ 
satisfying 
\begin{equation*}
\sum_{ 
  n ; \; 
  t_n < \sg_{M_0}^\eps \wedge ( T_0 / \tauu_2 ) 
} 
\tilde{Q}^\eps_n  
\le 
\tilde{\delta} (\eps) 
\end{equation*}
such that 
\begin{align} \nonumber
\Theta ( t_{n+1} , \mathbf{X}^\eps_{ t_{n+1} } ) 
& \le 
\Theta ( t_n , \mathbf{X}^\eps_{ t_n } ) 
 + 
\frac{\eps^2}{\sqrt{t_n}}
\abra{ \sqrt{\tauu_2} - \sqrt{\tauu_1} } 
\Lambda ( t_n , \mathbf{X}^\eps_{t_n} )
 - 
2 \eps^2 d 
\abra{ \sqrt{\tauu_2 } - \sqrt{\tauu_1 } }^2  
\\ \nonumber
& \qquad + 
2 \eps 
\abra{ \sqrt{\tauu_2 t_{n+1}} - \sqrt{\tauu_1 t_{n+1}} } 
\zt_{n+1}  
 + 
2 \eps^2 
\abra{ \sqrt{\tauu_2 t_{n+1}} - \sqrt{\tauu_1 t_{n+1}} } 
\Sigma_{n+1} 
\\ \label{eq:T2var-gRW}
& \qquad + 
\tilde{Q}^\eps_{n+1} 
. 
\end{align}
\end{corollary}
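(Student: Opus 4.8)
The plan is to substitute the algebraic expansion of $\Theta$ in terms of $\Lambda$ into the difference inequality \eqref{eq:2var-gRW} of Lemma~\ref{lem:2var-gRW} and to absorb everything that is not a leading-order term into $\tilde Q^\eps_{n+1}$, whose sum is then controlled by Lemma~\ref{lem:var-bound}. Write $a(t) := \sqrt{\tauu_2 t} - \sqrt{\tauu_1 t} = \sqrt{t}\,(\sqrt{\tauu_2}-\sqrt{\tauu_1}) \ge 0$, so that $\Theta(t,x,y) = 2 a(t)\Lambda(t,x,y) - 2 d\, a(t)^2$. Fix $T_0 < T$ and a compact $M_0 \subset M$. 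Since $T_0/\tauu_2 < T/\tauu_2$, on $\{t_n < \sg_{M_0}^\eps \wedge (T_0/\tauu_2)\}$ and for $\eps$ small one has $t_{n+1} = t_n + \eps^2$, hence, using $t_n \ge s \ge 1$,
\begin{equation*}
a(t_{n+1})^2 - a(t_n)^2 = \eps^2(\sqrt{\tauu_2}-\sqrt{\tauu_1})^2,
\qquad
2\big(a(t_{n+1}) - a(t_n)\big) = \tfrac{\eps^2}{\sqrt{t_n}}(\sqrt{\tauu_2}-\sqrt{\tauu_1}) + O(\eps^4)
\end{equation*}
with the $O(\eps^4)$ uniform in $n$.

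First I would use $a(t_{n+1}) \ge 0$ and \eqref{eq:2var-gRW} to estimate $\Theta(t_{n+1}, \mathbf{X}^\eps_{t_{n+1}}) = 2 a(t_{n+1})\Lambda(t_{n+1}, \mathbf{X}^\eps_{t_{n+1}}) - 2 d\, a(t_{n+1})^2$ from above, replacing $\Lambda(t_{n+1}, \mathbf{X}^\eps_{t_{n+1}})$ by $\Lambda(t_n, \mathbf{X}^\eps_{t_n}) + \eps\zt_{n+1} + \eps^2\Sigma_{n+1} + Q^\eps_{n+1}$. Regrouping so that $\Theta(t_n, \mathbf{X}^\eps_{t_n}) = 2 a(t_n)\Lambda(t_n,\mathbf{X}^\eps_{t_n}) - 2 d\, a(t_n)^2$ appears explicitly, and inserting the two identities above, the leading terms assemble into precisely the right-hand side of \eqref{eq:T2var-gRW}, provided one sets
\begin{equation*}
\tilde Q^\eps_{n+1} := 2 a(t_{n+1}) Q^\eps_{n+1} + \Big(2(a(t_{n+1}) - a(t_n)) - \tfrac{\eps^2}{\sqrt{t_n}}(\sqrt{\tauu_2}-\sqrt{\tauu_1})\Big)\Lambda(t_n, \mathbf{X}^\eps_{t_n}) + 2 d\big(\eps^2(\sqrt{\tauu_2}-\sqrt{\tauu_1})^2 - (a(t_{n+1})^2 - a(t_n)^2)\big).
\end{equation*}
These manipulations are exact, so \eqref{eq:T2var-gRW} holds for every $n$ (the last summand only matters at the boundary step $t_{n+1} = T/\tauu_2$).

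It then remains to show $\sum_n \tilde Q^\eps_n \le \tilde\delta(\eps) \to 0$ over $\{t_n < \sg_{M_0}^\eps \wedge (T_0/\tauu_2)\}$, and this is where the uniform bounds enter. On that event $0 \le a(t_{n+1}) \le a(T/\tauu_2)$; the proof of Lemma~\ref{lem:2var-gRW} in fact yields $\eps^{-2} Q^\eps_{n+1} \to 0$ uniformly in $n$ and independently of $\mathbf{X}^\eps_{t_n}$; by Lemma~\ref{lem:var-bound}(i) together with the compactness of $M_0$, $|\Lambda(t_n, \mathbf{X}^\eps_{t_n})|$ is bounded by a constant depending only on $T_0$ and $M_0$; and for $\eps$ small the last summand vanishes. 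Hence each term of the sum is $o(\eps^2)$ uniformly, and since there are at most of order $\eps^{-2}$ such indices, the sum tends to $0$; take $\tilde\delta(\eps)$ to be a deterministic upper bound for it. The only point requiring care — and the reason Lemmas~\ref{lem:2var-gRW} and \ref{lem:var-bound} are stated with uniform constants — is that this per-step smallness must be uniform over the $O(\eps^{-2})$ steps and over the random position $\mathbf{X}^\eps_{t_n}$; beyond that the argument is routine bookkeeping.
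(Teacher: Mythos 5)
Your argument is correct and is essentially the paper's own (the paper states the corollary as an immediate consequence of Lemma~\ref{lem:2var-gRW} and the uniform bounds of Lemma~\ref{lem:var-bound}): you expand $\Theta=2a(t)\Lambda-2d\,a(t)^2$, substitute \eqref{eq:2var-gRW}, and absorb the $O(\eps^4)$ Taylor remainders of $a$ together with $2a(t_{n+1})Q^\eps_{n+1}$ into $\tilde Q^\eps_{n+1}$, using the uniform $o(\eps^2)$ control of $Q^\eps_n$ from the proof of Lemma~\ref{lem:2var-gRW} and the boundedness of $\Lambda(t_n,\mathbf{X}^\eps_{t_n})$ on $\{t_n<\sg_{M_0}^\eps\wedge(T_0/\tauu_2)\}$ over the $O(\eps^{-2})$ steps. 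No gap.
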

The term $\Sigma_n$ corresponds to 
the one dominating the bounded variation part 
of $d \Lambda ( t , \tilde{X}_t , \tilde{Y}_t )$ 
in section~\ref{sec:SDE}. 
However, as a result of our discretization, 
we are no longer able to apply 
Proposition~\ref{lambda} directly 
to estimate $\Sigma_n$ itself. 
In this case, we can do it 
to the conditional expectation of $\Sigma_n$ instead. 
Set $\mathcal{G}_n := \sg ( \lambda_1 , \ldots , \lambda_n )$ 
and $\bar{\Sigma}_{n+1} := \E [ \Sigma_{n+1} \: | \: \mathcal{G}_n ]$. 
Then, since 
each $\Phi_i$ is isometry 
and 
$
(d+2) 
\E [ 
 \dbra{ \lambda_n , e_i } 
 \dbra{ \lambda_n , e_j } 
] 
= 
\dl_{ij}
$, 
Proposition~\ref{lambda} yields 
\begin{equation} \label{eq:expected}
\bar{\Sigma}_n 
\le 
\frac{d}{\sqrt{t_n}} 
\abra{ 
  \sqrt{\tauu_2} - \sqrt{\tauu_1}
} 
- 
\frac{1}{2 t_n} 
\Lambda ( t_n , \mathbf{X}^\eps_{ t_n } ) 
. 
\end{equation} 
In order to replace $\Sigma_n$ with $\bar{\Sigma}_n$ 
in \eqref{eq:T2var-gRW}, 
we show the following: 
\begin{lemma} \label{lem:Doob}
Let $t_0 < T_0 < T$ and $M_0 \subset M$ compact.  
For $t \in [ \tauu_1 , T ]$, 
set 
$
N_t^\eps 
:= 
\sup \{ n \in \N \; | \; \tauu_2 ( s + \eps^2 n ) \le t \}
$. 
Then, for $\eta > 0$, 
\begin{equation*} 
\limsup_{\eps \to 0} 
\P \cbra{ 
  \sup_{
    \begin{subarray}{c}
        N_{t_0}^\eps \le N \le N_{T_0}^\eps 
        \\ 
        t_N \le \sg_{M_0}^\eps 
    \end{subarray}
  } 
  \left| 
    \sum_{n=1}^{N} \sqrt{t_n} ( \Sigma_n - \bar{\Sigma}_n )
  \right| 
  > \eps^{-2} \eta 
} 
= 0. 
\end{equation*}
\end{lemma}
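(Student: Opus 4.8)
The plan is to recognise $(\Sigma_n-\bar{\Sigma}_n)_n$ as a martingale difference sequence for the filtration $(\mathcal{G}_n)_n$ — recall $\bar{\Sigma}_n=\E[\Sigma_n\mid\mathcal{G}_{n-1}]$ — and, since each $\sqrt{t_n}$ is deterministic, to run Doob's $L^2$-maximal inequality on the martingale $\sum_{n=1}^N\sqrt{t_n}(\Sigma_n-\bar{\Sigma}_n)$. The key quantitative input is Lemma~\ref{lem:var-bound}(ii): on the event where the walk has not yet left $M_0\times M_0$ the summands are bounded by a constant $R=R(T_0,M_0)$, so the terminal second moment of the (localized) martingale is a sum of $O(\eps^{-2})$ contributions each of size $O(1)$, hence of order $\eps^{-2}$; multiplying by the $\eps^4$ produced by squaring the threshold $\eps^{-2}\eta$ yields a bound of order $\eps^2\to0$.

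To make this precise I would first localise. Put $A_n:=\{\mathbf{X}^\eps_{t_n}\in M_0\times M_0\}\in\mathcal{G}_n$ and
\[
M_N^\eps:=\sum_{n=1}^{N}\sqrt{t_n}\,(\Sigma_n-\bar{\Sigma}_n)\,\mathbf{1}_{A_{n-1}},\qquad 1\le N\le N_{T_0}^\eps .
\]
Since $\mathbf{1}_{A_{n-1}}$ is $\mathcal{G}_{n-1}$-measurable and $\E[\Sigma_n-\bar{\Sigma}_n\mid\mathcal{G}_{n-1}]=0$, every increment of $M^\eps$ has vanishing conditional mean given $\mathcal{G}_{n-1}$; and, since the bound $|\Sigma_n|\le R$ in Lemma~\ref{lem:var-bound}(ii) really only uses $\mathbf{X}^\eps_{t_{n-1}}\in M_0\times M_0$ (via \eqref{eq:velo-bound1} for $\gm_{n-1}$ together with continuity of the tensors entering $H$), the increments are bounded on $A_{n-1}$ by $2R\sqrt{T_0/\tauu_2}$ for all $n\le N_{T_0}^\eps$. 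Thus $(M_N^\eps)_{N\le N_{T_0}^\eps}$ is a square-integrable martingale. On $\{N\le N_{T_0}^\eps\}\cap\{t_N\le\sg_{M_0}^\eps\}$ one has $t_{n-1}\le t_{N-1}<t_N\le\sg_{M_0}^\eps$ for every $n\le N$, so $A_{n-1}$ holds and $M_N^\eps$ coincides with $\sum_{n=1}^N\sqrt{t_n}(\Sigma_n-\bar{\Sigma}_n)$ there; hence the event in the lemma — which the additional constraint $N\ge N_{t_0}^\eps$ only makes smaller — is contained in $\{\sup_{N\le N_{T_0}^\eps}|M_N^\eps|>\eps^{-2}\eta\}$.

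Then I would apply Doob's inequality and orthogonality of the martingale increments:
\[
\P\!\left[\sup_{N\le N_{T_0}^\eps}|M_N^\eps|>\eps^{-2}\eta\right]\le\frac{\eps^4}{\eta^2}\,\E\!\left[(M_{N_{T_0}^\eps}^\eps)^2\right]=\frac{\eps^4}{\eta^2}\sum_{n\le N_{T_0}^\eps}t_n\,\E\!\left[(\Sigma_n-\bar{\Sigma}_n)^2\mathbf{1}_{A_{n-1}}\right].
\]
Since $|\Sigma_n|\le R$ on $A_{n-1}$ we get $\E[(\Sigma_n-\bar{\Sigma}_n)^2\mathbf{1}_{A_{n-1}}]\le R^2$, while $t_n\le T_0/\tauu_2$ and the number of indices is $N_{T_0}^\eps=O(\eps^{-2})$; so the right-hand side is $O(\eps^4\cdot\eps^{-2})=O(\eps^2)$, which $\to0$, and this gives the claim.

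The hard part is not the analysis but the adaptedness bookkeeping around the stopping rule $\sg_{M_0}^\eps$: since $\{t_n<\sg_{M_0}^\eps\}$ is only $\mathcal{G}_n$-measurable, one cannot localise directly along it without spoiling the martingale property, and the remedy is precisely to localise along the $\mathcal{G}_{n-1}$-measurable sets $A_{n-1}=\{\mathbf{X}^\eps_{t_{n-1}}\in M_0\times M_0\}$ and then check that the localized sum still coincides with the genuine partial sum on $\{t_N\le\sg_{M_0}^\eps\}$. A minor additional point is that $\Sigma_n$ must be integrable for $\bar{\Sigma}_n$ to be defined, which holds because the geodesic random walk has uniformly bounded one-step increments, making $\Sigma_n$ bounded by a constant times $\eps^{-2}$ for $n\le N_{T_0}^\eps$.
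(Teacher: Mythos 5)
Your proposal is correct and follows essentially the same route as the paper: boundedness of $\Sigma_n$ (hence of $\bar{\Sigma}_n$) from Lemma~\ref{lem:var-bound} on the non-exited event, the martingale structure of the centered partial sums, and Doob's maximal inequality against the threshold $\eps^{-2}\eta$, giving a bound of order $\eps^{2}$. Your localization by the $\mathcal{G}_{n-1}$-measurable indicators $\mathbf{1}_{A_{n-1}}$ is just a more explicit rendering of the paper's appeal to the local-martingale property and the containment of the lemma's event in $\{\sup_{N\le N_{T_0}^\eps}|M_N^\eps|>\eps^{-2}\eta\}$, so no further changes are needed.
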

\begin{proof}
Lemma~\ref{lem:var-bound} ensures that 
$\Sigma_n$ and $\bar{\Sigma}_n$ is bounded 
as long as 
$n \le N_{T_0}^\eps$ and 
$t_n <  \sg_{M_0}^\eps$. 
Thus 
$\sum_{n=1}^N \sqrt{t_n} ( \Sigma_n - \bar{\Sigma}_n )$ 
is a $\mathcal{G}_N$-local martingale. 
Hence the Doob inequality implies 
\begin{equation} \label{eq:Doob}
\limsup_{\eps \to 0} 
\P \cbra{ 
  \sup_{
    \begin{subarray}{c}
        0 \le N \le N_{t}^\eps 
        \\ 
        t_N \le \sg_{M_0}^\eps 
    \end{subarray}
  } 
  \left| 
    \sum_{n=1}^{N} \sqrt{t_n} ( \Sigma_n - \bar{\Sigma}_n )
  \right| 
  > \eps^{-2} \eta 
} 
= 0 
\end{equation}
for $t \in [ \tauu_1 , T_0 ]$. 
Since we have 
\begin{align*} 
& 
\bbra{ 
  \sup_{
    \begin{subarray}{c}
        N_{t_0}^\eps \le N \le N_{T_0}^\eps 
        \\ 
        t_N \le \sg_{M_0}^\eps 
    \end{subarray}
  } 
  \left| 
    \sum_{ n = N_{t_0}^\eps }^{N} 
    \sqrt{t_n} ( \Sigma_n - \bar{\Sigma}_n )
  \right| 
  > \eps^{-2} \eta 
} 
\\
& \qquad \subset 
\bbra{ 
  \sup_{
    \begin{subarray}{c}
        0 \le N \le N_{t_0}^\eps 
        \\ 
        t_N \le \sg_{M_0}^\eps 
    \end{subarray}
  } 
  \left| 
    \sum_{n=1}^{N} \sqrt{t_n} ( \Sigma_n - \bar{\Sigma}_n )
  \right| 
  > \frac{\eps^{-2} \eta }{2}
} 
\cup 
\bbra{ 
  \sup_{
    \begin{subarray}{c}
        0 \le N \le N_{T_0}^\eps 
        \\ 
        t_N \le \sg_{M_0}^\eps 
    \end{subarray}
  } 
  \left| 
    \sum_{n=1}^{N} \sqrt{t_n} ( \Sigma_n - \bar{\Sigma}_n )
  \right| 
  > \frac{\eps^{-2} \eta}{2} 
}, 
\end{align*}
the assertion follows from \eqref{eq:Doob}. 
\end{proof}
As a final preparation, 
we show the following auxiliary lemma. 
\begin{lemma} \label{lem:i'bility}
There exists $C_3 > 0$ such that 
\begin{equation*} 
\E 
\cbra{ 
  \sup_{s \le t \le T / \tauu_2 } 
  | \Theta ( t , \mathbf{X}_t ) |^2  
} 
< C_3 
.  
\end{equation*} 
\end{lemma}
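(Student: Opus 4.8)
The plan is to bound $\sup_{s \le t \le T/\tauu_2} |\Theta(t, \mathbf{X}_t)|$ by controlling $\Lambda(t, \mathbf{X}_t) = L(X_{\tauu_1 t}, \tauu_1 t; Y_{\tauu_2 t}, \tauu_2 t)$, and the latter by the Riemannian distance between the two Brownian motions. By the lower and upper bounds \eqref{eq:L-bound} together with the metric comparison \eqref{eq:metric-bound}, there are constants $a, b > 0$ with $|\Lambda(t, \mathbf{X}_t)| \le a \, \rho_{g(T)}(X_{\tauu_1 t}, Y_{\tauu_2 t})^2 + b$ uniformly in $t \in [s, T/\tauu_2]$; since $\Theta$ is a bounded affine function of $\Lambda$ on this compact $t$-range, it suffices to prove $\E[\sup_t \rho_{g(T)}(X_{\tauu_1 t}, Y_{\tauu_2 t})^4] < \infty$. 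Because $\rho_{g(T)}(X_{\tauu_1 t}, Y_{\tauu_2 t}) \le \rho_{g(T)}(x, X_{\tauu_1 t}) + \rho_{g(T)}(x,y) + \rho_{g(T)}(y, Y_{\tauu_2 t})$, this reduces to showing that each scaled Brownian motion has finite fourth moment of its maximal $g(T)$-displacement from its starting point on a finite time interval.

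Next I would establish that bound for a single $g(\tau)$-Brownian motion $\tilde X$. Using \eqref{eq:metric-bound}, $\rho_{g(T)}$ is comparable to $\rho_{g(\tauu_1 t)}$, so it is enough to estimate the radial process $r_t := \rho_{g(\tauu_1 t)}(x, \tilde X_t)$ (or, more robustly, a smooth exhaustion-type function). Applying the It\^o formula for $\tilde X$ stated in Section~\ref{sec:SDE} to a distance-like function, the drift involves $\partial_t r$ (coming from $\partial g/\partial\tau = 2\Ric$, hence bounded by $C_0$ via \eqref{eq:curv-bound}) and a Laplacian term $\tauu_1 \Delta_{g(\tauu_1 t)} r$, which by the Laplacian comparison theorem under the curvature bound \eqref{eq:curv-bound} is bounded above by a constant plus a $1/r$ term; the martingale part has quadratic variation bounded by $2\tauu_1\, dt$. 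This yields $r_t \le C(1 + t) + (\text{martingale with bounded quadratic variation})$ up to dealing with the cut locus and the singularity of $r$ at $0$, and then the Burkholder--Davis--Gundy inequality gives $\E[\sup_{t \le T/\tauu_2} r_t^4] < \infty$. (The non-explosion result cited from \cite{kuwadaphilipowski} guarantees the process stays finite; the same comparison argument underlies it.)

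The main obstacle will be the non-smoothness of the Riemannian distance function on the cut locus of $(M, g(\tauu_1 t))$ and at the basepoint, compounded by the time-dependence of the metric. The clean way around it is to replace $\rho_{g(\tauu_1 t)}(x, \cdot)$ by a fixed smooth proper function $h$ with $|\nabla^{g(T)} h|_{g(T)}$ bounded and $h$ comparable to $\rho_{g(T)}(x,\cdot)$ outside a compact set (such $h$ exists by a standard smoothing of the distance function under bounded geometry); then It\^o's formula applies without cut-locus caveats, $\partial_t h = 0$, the gradient term contributes a martingale with quadratic variation controlled by $\|\nabla^{g(T)} h\|_\infty^2$ and \eqref{eq:metric-bound}, and $\Delta_{g(\tauu_1 t)} h$ is bounded on compacta and grows at most linearly in $h$ by the Laplacian comparison estimate derived from \eqref{eq:curv-bound}. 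A Gronwall-type argument on $\E[\sup_{u \le t} h(\tilde X_u)^2]$ followed by BDG upgrades this to the fourth moment. Combining the single-particle estimates for $X$ and $Y$ through the triangle inequality and feeding back into the bound on $|\Theta|$ completes the proof, with $C_3$ depending only on $d$, $C_0$, $T$, $\tauu_1$, $\tauu_2$, $s$ and $\rho_{g(T)}(x,y)$.
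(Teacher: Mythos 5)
Your proposal is correct and takes essentially the same route as the paper: bound $|\Theta|$ through \eqref{eq:L-bound} and \eqref{eq:metric-bound} by the squared $g(T)$-distance of the pair, reduce via the triangle inequality to fourth moments of each particle's maximal radial displacement, and conclude with a Burkholder/BDG-type inequality. The only difference is in the last step, where you re-derive the radial moment bound by It\^o's formula, Laplacian comparison and Gronwall (modulo the cut-locus/smoothing technicalities, and note that $\partial_\tau \rho_{g(\tau)}$ is bounded by a multiple of $\rho$ rather than by a constant, which Gronwall still absorbs), whereas the paper simply quotes from \cite{kuwadaphilipowski} that the radial process is dominated by a Bessel process plus a constant and then applies the Burkholder inequality.
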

\begin{proof}
By virtue of \eqref{eq:L-bound}, 
$\Theta ( t , \mathbf{X}_t )$ 
is bounded from below 
uniformly in $t \in [ s , T / \tauu_2 ]$. 
In addition, 
there is a constant $c, C > 0$ such that 
\begin{equation*}
\Theta ( t , \mathbf{X}_t ) 
\le 
c \rho_{g(T)} ( \mathbf{X}_t )^2 + C
\end{equation*} 
holds 
for $t \in [ s , T / \tauu_2 ]$. 
Take a reference point $o \in M$. 
Then we have 
\begin{equation*}
\rho_{g(T)} ( \mathbf{X}_t ) 
\le 
\e^{C_0 T } 
\abra{ 
  \rho_{g (\tau_1 t)} ( o , X_{\tau_1 t} ) 
  + 
  \rho_{g(\tau_2 t)} ( o , Y_{\tau_2 t} )
}. 
\end{equation*}
Thus the proof can be reduced to the following claim:
\begin{equation}\label{eq:moment}
\E \cbra{ 
  \sup_{
    \tauu_2 s \le t \le T 
  } 
  \rho_{g(\tauu_2 t)} ( o , Y_{\tauu_2 t} )^4
} < \infty 
.
\end{equation}
Indeed, a similar bound 
for $X_{ \tauu_1 t}$ follows 
in the same way. 
As shown in \cite{kuwadaphilipowski}, 
$( \rho_{g(t)} ( o , Y_t ) )_{[\tauu_2 s , T ]}$ 
is dominated from above 
by a Bessel process 
(of dimension $d$) 
plus a constant. 
Thus \eqref{eq:moment} easily follows from 
the Burkholder inequality for the fourth moment of 
a Euclidean Brownian motion. 
\end{proof}
%
%
%
\begin{proof}[Proof of Theorem~\ref{mainresult}]
%
First we remark that 
the map $(x,y) \mapsto ( X^\a , Y^\a )$ is 
obviously measurable. 
Thus, 
we obtain the same measurability for $( X, Y )$. 
The integrability of $\Theta ( t , \mathbf{X}_t )$ 
follows from Lemma~\ref{lem:i'bility}. 
We will show the supermartingale property 
in the sequel. 
For $s \le s_1 < \cdots < s_m < t' < t < T$ 
and 
$
f_1 , \ldots , f_m 
\in 
C_c (M \times M \to \R )
$ with $0 \le f_j \le 1$, 
Set 
$
F ( \mathbf{w} ) 
 := 
\prod_{j=1}^m 
f_j ( \mathbf{w}_{s_j} )
$
for 
$\mathbf{w} \in  C ( [ s , T / \tauu_2 ] \to M \times M)$.
Take $\eta > 0$ arbitrarily and 
choose a relatively compact open set 
$M_0 \subset M$ 
so that 
$
\P [ \sg_{M_0}^0 \le t ] 
\le 
\eta
$ 
holds. 
Note that 
$
\limsup_{\eps \to 0} 
\P [ \sg_{M_0}^\eps \le t ] 
\le 
\eta
$ 
also holds 
since $\{ w \; | \; \sg_{M_0} (w) \le t \}$ is closed.  
It suffices to show that 
there is a constant $C > 0$ which is 
independent of $\eta$ and $M_0$ 
such that, 
\begin{equation} \label{eq:sup-mart} 
\E \cbra{ 
  \abra{  
    \Theta 
    (
     t \wedge \sg_{M_0}^0 , 
     \mathbf{X}_{ t \wedge \sg_{M_0}^0 } 
    )
     - 
    \Theta 
    (
     t' \wedge \sg_{M_0}^0 , 
     \mathbf{X}_{ t' \wedge \sg_{M_0}^0 }
    )
  } 
  F ( \mathbf{X}_{ \cdot \wedge \sg_{M_0}^0 } )
} 
\le C \sqrt{\eta}   
\end{equation}
holds. 
In fact, once we have shown \eqref{eq:sup-mart}, 
then
Lemma~\ref{lem:i'bility} yields 
\begin{equation*}
\E \cbra{ 
  \abra{  
    \Theta 
    (
     t , 
     \mathbf{X}_t
    )
     - 
    \Theta 
    (
     s ,
     \mathbf{X}_s
    )
  } 
  F ( \mathbf{X} ) 
} 
\le 0 
\end{equation*}
since $\sg_{M_0}^0 \to \infty$ 
almost surely as $M_0 \uparrow M$. 

Take $f \in C_c ( M \times M )$ 
such that 
$0 \le f \le 1$ and $f|_{U} \equiv 1$, 
where $U \subset M \times M$ is a open set 
containing $\bar{M}_0 \times \bar{M}_0$. 
Then, 
by virtue of 
Lemma~\ref{lem:i'bility} 
and 
the choice of $M_0$, 
\begin{multline} \label{eq:discretize0}
\E  \Big[ 
  \Big(
  \Theta 
   (
    t \wedge \sg_{M_0}^0 , 
    \mathbf{X}_{ t \wedge \sg_{M_0}^0 } 
   )
   - 
  \Theta 
   (
    t' \wedge \sg_{M_0}^0 , 
    \mathbf{X}_{ t' \wedge \sg_{M_0}^0 }
   )
  \Big) 
  F ( \mathbf{X}_{ \cdot \wedge \sg_{M_0}^0 } )
\Big] 
\\ 
\le 
\E \cbra{ 
  \abra{ 
    \Theta 
    (
     t , 
     \mathbf{X}_{t}
    )
     - 
    \Theta 
    (
     t' , 
     \mathbf{X}_{ t' }
    )
  } 
  f ( \mathbf{X}_t ) f (\mathbf{X}_{t'}) 
  F ( \mathbf{X} ) 
  \; ; \; \sg_{M_0}^0 > t 
} 
+ 2 C_3^{1/2} \sqrt{\eta}
.
\end{multline}
For $u \in [ s , T /\tauu_2 ]$, 
let us define $\ebra{ u }_\eps$ 
by 
\begin{equation*}
\ebra{ u }_\eps 
: = 
\sup 
\{ s + \eps^2 n 
  \; | \; 
  n \in \N \cup \{ 0 \}, 
  1 + \eps^2 n < u 
\}
. 
\end{equation*}
Then, 
since $\{ w \; | \; \sg_{M_0} (w) > t \}$ is open,  
\begin{align} \nonumber
\E 
& 
\cbra{ 
  \abra{ 
    \Theta 
    (
     t , 
     \mathbf{X}_{t}
    )
     - 
    \Theta 
    (
     t' , 
     \mathbf{X}_{ t' }
    )
  } 
  f ( \mathbf{X}_t ) 
  f ( \mathbf{X}_{t'} ) 
  F ( \mathbf{X} ) 
  \; ; \; \sg_{M_0}^0 > t 
} 
\\ \nonumber
& \le 
\liminf_{\eps \to 0}
\E \cbra{ 
  \abra{ 
    \Theta 
    (
     t , 
     \mathbf{X}_{t}^\eps
    )
     - 
    \Theta 
    (
     t' , 
     \mathbf{X}_{t'}^\eps
    )
  } 
  f ( \mathbf{X}_t^\eps ) 
  f ( \mathbf{X}_{t'}^\eps ) 
  F ( \mathbf{X}^\eps ) 
  \; ; \; \sg_{M_0}^\eps > t 
} 
\\ \label{eq:discretize1}
& = 
\liminf_{\eps \to 0}
\E \cbra{ 
  \abra{ 
    \Theta 
    (
     \ebra{t}_\eps , 
     \mathbf{X}_{ \ebra{t}_\eps }^\eps
    )
     - 
    \Theta 
    (
     \ebra{t'}_\eps , 
     \mathbf{X}_{ \ebra{t'}_\eps }^\eps
    )
  } 
  f ( \mathbf{X}_{ \ebra{t}_\eps }^\eps ) 
  f ( \mathbf{X}_{ \ebra{t'}_\eps }^\eps ) 
  F ( \mathbf{X}^\eps ) 
  \; ; \; \sg_{M_0}^\eps > t 
} 
.
\end{align}
Here the last inequality follows 
from the continuity of $\Theta$ and $f$. 
Set 
$\hat{\sg}_{M_0}^\eps := \ebra{ \sg_{M_0}^\eps }_\eps + \eps^2$. 
Note that $\{ \hat{\sg}_{M_0}^\eps = t_n \} \in \mathcal{G}_n$ 
for all $n \in \N \cup \{ 0 \}$. 
Then 
\begin{align} \nonumber 
\E & 
\cbra{ 
  \abra{ 
    \Theta 
    (
     \ebra{t}_\eps , 
     \mathbf{X}_{ \ebra{t}_\eps }^\eps
    )
     - 
    \Theta 
    (
     \ebra{t'}_\eps , 
     \mathbf{X}_{ \ebra{t'}_\eps }^\eps
    )
  } 
  f ( \mathbf{X}_{ \ebra{t}_\eps }^\eps ) 
  f ( \mathbf{X}_{ \ebra{t'}_\eps }^\eps ) 
  F ( \mathbf{X}^\eps ) 
  \; ; \; \sg_{M_0}^\eps > t 
} 
\\ \nonumber
& \hspace{4em} \le 
\E \cbra{ 
  \abra{ 
    \Theta 
    (
     \ebra{t}_\eps \wedge \hat{\sg}_{M_0}^\eps , 
     \mathbf{X}_{ 
       \ebra{t}_\eps \wedge \hat{\sg}_{M_0}^\eps 
     }^\eps
    )
     - 
    \Theta 
    (
     \ebra{t'}_\eps \wedge \hat{\sg}_{M_0}^\eps , 
     \mathbf{X}_{ 
       \ebra{t'}_\eps \wedge \hat{\sg}_{M_0}^\eps 
     }^\eps
    )
  } 
  F 
  ( 
   \mathbf{X}_{ 
     \cdot \wedge \hat{\sg}_{M_0}^\eps 
   }^\eps 
  ) 
} 
\\ \label{eq:discretize2}
& \hspace{12em} + 
2 
\E \cbra{ 
  \sup_{
    s \le u \le T / \tauu_2
  } 
  \left| 
      \Theta ( u , \mathbf{X}_u^\eps ) 
      f ( \mathbf{X}_u^\eps ) 
  \right|^2 
}^{1/2} 
\P [ \sg_{M_0}^\eps \le t ]^{1/2}
. 
\end{align} 
Since a function 
$
\mathbf{w} \mapsto \sup_{1 \le u \le T / \tauu_2} 
| \Theta ( u, \mathbf{w}_u ) f ( \mathbf{w}_u ) |
$ 
on 
$C ( [ s, T / \tauu_2 ] \to M \times M )$ 
is bounded and continuous, 
we have 
\begin{equation} \label{eq:discretize3}
\limsup_{\eps \to 0} 
\E \cbra{ 
  \sup_{
    s \le u \le T / \tauu_2
  } 
  \left| 
      \Theta ( u , \mathbf{X}_u^\eps ) 
      f ( \mathbf{X}_u^\eps ) 
  \right|^2 
}^{1/2} 
\P [ \sg_{M_0}^\eps \le t ]^{1/2}
\le 
C_3^{1/2} \sqrt{\eta} 
. 
\end{equation}
By combining 
\eqref{eq:discretize1}, 
\eqref{eq:discretize2} and 
\eqref{eq:discretize3} 
with \eqref{eq:discretize0}, 
the proof of \eqref{eq:sup-mart} is reduced to 
show the following estimate: 
\begin{equation} \label{eq:sup-mart1}
\limsup_{\eps \to 0} 
\E \cbra{ 
  \abra{ 
    \Theta 
    (
     \ebra{t}_\eps \wedge \hat{\sg}_{M_0}^\eps , 
     \mathbf{X}_{ 
       \ebra{t}_\eps \wedge \hat{\sg}_{M_0}^\eps 
     }^\eps
    )
     - 
    \Theta 
    (
     \ebra{t'}_\eps \wedge \hat{\sg}_{M_0}^\eps , 
     \mathbf{X}_{ 
       \ebra{t'}_\eps \wedge \hat{\sg}_{M_0}^\eps 
     }^\eps
    )
  } 
  F 
  ( 
   \mathbf{X}_{
     \cdot \wedge \hat{\sg}_{M_0}^\eps 
   }^\eps 
  ) 
} 
\le C \sqrt{\eta}
. 
\end{equation} 
%
%
%
%
Take $N_1$ and $N_2$  so that  
$t_{N_1} = \ebra{ t' }_\eps \wedge \hat{\sg}_{M_0}^\eps$ and 
$t_{N_2} = \ebra{ t }_\eps \wedge \hat{\sg}_{M_0}^\eps$ hold. 
Let $E_\eta$ be an event defined by 
\begin{equation*}
E_\eta 
: = 
\bbra{ 
  \left| 
      \sum_{n = N_1}^{N_2} 
      \sqrt{t_n} ( \Sigma_n - \bar{\Sigma}_n )
  \right| 
  \le 
  \frac{\sqrt{\eta}}
       {2 \eps^2 ( \sqrt{\tauu_2} - \sqrt{\tauu_1} ) } 
}
.
\end{equation*}
On $E_\eta$, 
an iteration of \eqref{eq:T2var-gRW} 
together with \eqref{eq:expected} 
yields 
\begin{equation*}
\Theta ( t_{N_2} , \mathbf{X}_{t_{N_2}} ) 
- 
\Theta ( t_{N_1} , \mathbf{X}_{t_{N_1}} ) 
\le 
\eps \sum_{n = N_1 +1}^{N_2} 
\abra{ 
  \sqrt{ \tauu_2 t_{n} } 
  -
  \sqrt{ \tauu_1 t_{n} }
} \zt_n
+ 2 \sqrt{\eta}
\end{equation*}
for sufficiently small $\eps$. 
In addition, Lemma~\ref{lem:Doob} yields 
$\limsup_{\eps \to 0} \P [ E_\eta^c ] = 0$. 
By applying Lemma~\ref{lem:var-bound} with $T_0 = \tauu_2 t$ 
to an iteration of \eqref{eq:T2var-gRW}, 
we obtain a constant $C > 0$ satisfying 
\[
\left| 
    \Theta ( t_{N_2} , \mathbf{X}_{t_{N_2}}^\eps ) 
    - 
    \Theta ( t_{N_1} , \mathbf{X}_{t_{N_1}}^\eps ) 
    - 
    \eps
    \sum_{n = N_1 + 1}^{N_2} 
    \abra{ 
      \sqrt{ \tauu_2 t_n } 
      - 
      \sqrt{ \tauu_1 t_n } 
    }
    \zt_n  
\right| < C 
\]
uniformly in sufficiently small $\eps > 0$. 
Since 
$F ( \mathbf{X}_{\cdot \wedge \hat{\sg}_{M_0}}^\eps )$ is 
$\mathcal{G}_{N_1}$-measurable, 
we obtain 
\begin{align*} \nonumber
\E & \cbra{ 
  ( 
   \Theta ( t_{N_2} , \mathbf{X}_{t_{N_2}}^\eps ) 
    - 
   \Theta ( t_{N_1} , \mathbf{X}_{t_{N_1}}^\eps ) 
  )
  F ( \mathbf{X}_{ \cdot \wedge \hat{\sg}_{M_0}^\eps }^\eps )
} 
\le 
C \, 
\P \cbra{ E_\eta^c }^{1/2} 
 + 
2 \sqrt{\eta} 
. 
\end{align*}
Hence \eqref{eq:sup-mart1} holds with $C = 1/2$ 
and the proof is completed.
\end{proof}


\begin{thebibliography}{99}
\bibitem{act} M.~Arnaudon, K.~A.~Coulibaly, A.~Thalmaier, Brownian motion with respect to a metric depending on time;
definition, existence and application to Ricci flow, \textit{C. R. Acad. Sci. Paris, Ser.~I} \textbf{346} (2008), 773--778.

\bibitem{act2} M.~Arnaudon, K.~A.~Coulibaly, A.~Thalmaier, Horizontal diffusion in $C^1$ path space, arXiv:0904.2762, to appear in \textit{S\'em. Prob.}

\bibitem{caozhu} H.-D.~Cao, X.-P.~Zhu, A complete proof of the Poincar\'{e} and geometrization conjectures---application of the Hamilton-Perelman theory of the Ricci flow.  \textit{Asian J. Math.} \textbf{10} (2006), 165--492.

\bibitem{chenzhu} B.-L.~Chen, X.-P.~Zhu, Uniqueness of the Ricci flow on complete noncompact manifolds, \textit{J. Diff. Geom.} \textbf{74} (2006), 119--154.

\bibitem{chowetal} B.~Chow, S.-C.~Chu, D.~Glickenstein, C.~Guenther, J.~Isenberg, T.~Ivey, D.~Knopf, P.~Lu, F.~Luo, L.~Ni,
\textit{The Ricci Flow: Techniques and Applications, Part~I: Geometric Aspects}, American Mathematical Society, Providence, Rhode Island, 2007.

\bibitem{chowknopf} B.~Chow, D.~Knopf, \textit{The Ricci flow: An introduction}, Mathematical Surveys and Monographs, vol.110, Amer.~Math.~Soc., Providence, Rhode Island, 2004. 

\bibitem{coulibaly} K.~A.~Coulibaly, Brownian motion with respect to time-changing Riemannian metrics, applications to Ricci flow, Preprint (2009), arXiv:0901.1999.


\bibitem{hamilton} R.~S.~Hamilton, Three-manifolds with positive Ricci curvature, \textit{J. Differ. Geom.} \textbf{17} (1982), 255--306.




\bibitem{kleinerlott} B.~Kleiner, J.~Lott, Notes on Perelman's papers, \textit{Geom. Topol.} \textbf{12} (2008), 2587--2855.

\bibitem{kuwada}
K.~Kuwada, Coupling of the Brownian motion via discrete approximation under lower Ricci curvature bounds, \textit{Probabilistic Approach to Geometry} (Kyoto 2008), 273--292, Adv. Stud. Pure Math. vol.~57, Math. Soc. Japan, Tokyo, 2010. 

\bibitem{kuwada2} 
K.~Kuwada, Coupling by reflection via discrete approximation under a backward Ricci flow, Preprint (2010), arXiv:1007.0275v1.

\bibitem{kuwadaphilipowski} K.~Kuwada, R.~Philipowski, Non-explosion of diffusion processes on manifolds with time-dependent metric, to appear in \textit{Math. Z.}

\bibitem{lee} J.~M.~Lee, \textit{Riemannian manifolds: An introduction to curvature}, Springer-Verlag, New York, 1997.

\bibitem{lott} J.~Lott, Optimal transport and Perelman's reduced volume, \textit{Calc. Var.} \textbf{36} (2009), 49--84.

\bibitem{lottvillani} J.~Lott, C.~Villani, Ricci curvature for metric-measure spaces via optimal transport, \textit{Ann. Math.} \textbf{169} (2009), 903--991.

\bibitem{mccanntopping} R.~J.~McCann, P.~Topping, Ricci flow, entropy and optimal transportation, \textit{Amer. J. Math.} \textbf{132} (2010), 711--730. 

\bibitem{morgantian} J.~Morgan, G.~Tian, \textit{Ricci Flow and the Poincar\'e Conjecture}, American Mathematical Society, Providence, RI, 2007.

\bibitem{perelman1} G.~Perelman, The entropy formula for the Ricci flow and its geometric applications, Preprint (2002), arXiv:math/0211159v1.

\bibitem{perelman2} G.~Perelman, Ricci flow with surgery on three-manifolds, Preprint (2003), arXiv:math/0303109v1.

\bibitem{perelman3} G.~Perelman, Finite extinction time for the solutions to the Ricci flow on certain three-manifolds, Preprint (2003), arXiv:math/0307245v1.



\bibitem{shi} W.-X.~Shi, Deforming the metric on complete Riemannian manifolds, \textit{J. Diff. Geom.} \textbf{30} (1989), 223--301. 

\bibitem{toppinglectures} P.~Topping, \textit{Lectures on the Ricci flow}, Cambridge University Press, 2006.

\bibitem{topping} P.~Topping, $\LL$-optimal transportation for Ricci flow, \textit{J. reine angew. Math.} \textbf{636} (2009), 93--122.

\bibitem{villani} C.~Villani, \textit{Optimal Transport, Old and New}, Grundlehren der Mathematischen Wissenschaften 338, Springer-Verlag, 2009.

\bibitem{ye} R.~G.~Ye, On the $l$-function and the reduced volume of Perelman I, \textit{Trans. Amer. Math. Soc.} \textbf{360} (2008), no.1, 507--531.    

\end{thebibliography}
\end{document}